\numberwithin{equation}{section}
\newtheorem{lemma}{Lemma}[section]
\newtheorem{theorem}[lemma]{Theorem}
\newtheorem{proposition}[lemma]{Proposition}
\newtheorem{definition}[lemma]{Definition}
\newtheorem{corollary}[lemma]{Corollary}
\newtheorem{example}[lemma]{Example}
\newtheorem{exercise}[lemma]{Exercise}
\newtheorem{remark}[lemma]{Remark}
\newtheorem{fig}[lemma]{Figure}
\newtheorem{tab}[lemma]{Table}
\newcommand{\bth}{\begin{theorem}}
\newcommand{\ethe}{\end{theorem}}
\newcommand{\bre}{\begin{remark}\em }
\newcommand{\ere}{\end{remark}}
\newcommand{\ble}{\begin{lemma}}
\newcommand{\ele}{\end{lemma}}
\newcommand{\bde}{\begin{definition}}
\newcommand{\ede}{\end{definition}}
\newcommand{\bco}{\begin{corollary}}
\newcommand{\eco}{\end{corollary}}
\newcommand{\bpr}{\begin{proposition}}
\newcommand{\epr}{\end{proposition}}
\newcommand{\bexer}{\begin{exercise}}
\newcommand{\eexer}{\end{exercise}}
\newcommand{\bexam}{\begin{example}\rm  }
\newcommand{\eexam}{ \end{example}}
\newcommand{\bfi}{\begin{fig}}
\newcommand{\efi}{\end{fig}}
\newcommand{\btab}{\begin{tab}}
\newcommand{\etab}{\end{tab}}
\def\E{{\mathbb{E}}}
\def\P{{\mathbb{P}}}
\def\R{{\mathbb{R}}}
\def\N{{\mathbb{N}}}
\def\Y{{\mathbb{Y}}}
\def\Z{{\mathbb{Z}}}
\def\m{{\bf{m}}}
\def\I{{\mathcal{I}}}
\def\X{{\mathcal{X}}}
\def\CY{{\mathcal{Y}}}
\def\B_e{B_{\eta}(e)}
\renewcommand{\a}{\alpha }
\renewcommand{\b}{\beta}
\newcommand{\del}{\delta}
\newcommand{\g}{\gamma}
\newcommand{\eps}{\varepsilon}
\newcommand{\8}{\infty}
\newcommand{\ov}{\overline}
\newcommand{\wt}{\widetilde}
\newcommand{\blue}{\color{darkblue} }
\definecolor{darkblue}{rgb}{0,0,1}
\definecolor{darkgreen}{rgb}{0,1,0}
\definecolor{darkred}{rgb}{1, 0,0}
\newcommand{\garch}{{\rm GARCH}$(1,1)$}
\newcommand{\rv}{random variable}
\newcommand{\bfPi}{\mbox{\boldmath$\Pi$}}
\newcommand{\beao}{\begin{eqnarray*}}
\newcommand{\eeao}{\end{eqnarray*}\noindent}
\newcommand{\beam}{\begin{eqnarray}}
\newcommand{\eeam}{\end{eqnarray}\noindent}
\newcommand{\beqq}{\begin{equation}}
\newcommand{\eeqq}{\end{equation}\noindent}
\newcommand{\bce}{\begin{center}}
\newcommand{\ece}{\end{center}}
\newcommand{\barr}{\begin{array}}
\newcommand{\earr}{\end{array}}
\newcommand{\eqd}{\stackrel{d}{=}}
\newcommand{\vague}{\stackrel{\lower0.2ex\hbox{$\scriptscriptstyle
                    \it{v} $}}{\rightarrow}}
\newcommand{\weak}{\stackrel{\lower0.2ex\hbox{$\scriptscriptstyle
                    \it{w} $}}{\rightarrow}}
\newcommand{\what}{\stackrel{\lower0.2ex\hbox{$\scriptscriptstyle
                    \it{\hat{w}} $}}{\rightarrow}}
\newcommand{\bdis}{\begin{displaymath}}
\newcommand{\edis}{\end{displaymath}\noindent}
\newcommand{\wh}{\widehat}
\newcommand{\vep}{\varepsilon}
\newcommand{\con}{convergence}
\newcommand{\bfx}{{\bf x}}
\newcommand{\bfX}{{\bf X}}
\newcommand{\bfB}{{\bf B}}
\newcommand{\bfy}{{\bf y}}
\newcommand{\bfA}{{\bf A}}
\newcommand{\bfW}{{\bf W}}
\newcommand{\bfI}{{\bf I}}
\newcommand{\bali}{\begin{align}}
\newcommand{\eali}{\end{align}}
\newcommand{\Ind}{\mathbf{1}_}
\newtheorem{thm}[equation]{Theorem}
\begin{document}
\bibliographystyle{alpha}

\title[Tails of bivariate stochastic recurrence equation]{Tails of bivariate stochastic recurrence equation with triangular matrices }
\today
\author[E. Damek]{Ewa Damek}
\address{Institute of Mathematics
University of Wroc\l aw\\ Pl.  Grunwaldzki 2/4, 50-384, Wroc\l aw, Poland}  
\email{edamek@math.uni.wroc.pl}
\author[M. Matsui]{Muneya Matsui}
 \address{Department of Business Administration, Nanzan University \\
18 Yamazato-cho Showa-ku Nagoya, 466-8673, Japan}
\email{mmuneya@nanzan-u.ac.jp}

\begin{abstract} 
We study bivariate stochastic recurrence equations with triangular matrix coefficients and we
characterize the tail behavior of their stationary solutions $\bfW =(W_1,W_2)$. 
Recently it has been observed that $W_1,W_2$ 
may exhibit regularly varying tails with different indices, which is in contrast to well-known Kesten-type results. 
However, only partial results have been derived. 
Under typical ``Kesten-Goldie'' and ``Grey'' conditions, 
we completely characterize tail behavior of $W_1,W_2$. The tail asymptotics we obtain has not been observed in previous settings of stochastic recurrence equations. \\
\vspace{2mm} \\
{\it Key words. }\ Stochastic recurrence equation, regular variation, 
Kesten's theorem,  autoregressive models, triangular matrix. 
\end{abstract}
\subjclass[2010]{Primary 60G70, 60G10, 60H25, Secondary 62M10, 91B84}
\maketitle 
\section{Introduction}
We consider the stochastic recurrence equation (SRE)
\begin{equation}\label{affine} 
\bfW_t = \bfA_t\bfW_{t-1}+\bfB_t, \quad t\in \N,
\end{equation}
where $(\bfA_t,\bfB_t)$ is an i.i.d.\ sequence, $\bfA_t$ are $d\times d$
matrices, $\bfB_t$ are vectors and $\bfW_0$ is an initial distribution
independent of the sequence $(\bfA_t, \bfB_t)$. Iteration of
\eqref{affine} generates a Markov chain $(\bfW_t)_{t\geq0}$ that is not necessarily stationary.
Under mild contractivity hypotheses (see e.g. \cite{bougerol:picard:1992a,brandt:1986})
the sequence $\bfW_t$ converges in law to a random
vector $\bfW$ that is the unique solution of the equation
\begin{equation*}
\bfW \stackrel{d}{=} \bfA \bfW+\bfB, 
\end{equation*} where $\bfW$ is independent
of $(\bfA,\bfB)$ 
and the equation is meant in law. Here $(\bfA,\bfB)$ is a generic element of the sequence $(\bfA_t,\bfB_t)$.
If we put $\bfW_0=\bfW$ then the chain $\bfW_t$ becomes
stationary. Moreover, extending the set of indices to $\Z$
and taking an i.i.d.\ sequence $(\bfA_t, \bfB_t)_{t\in \Z}$ we can have a strictly stationary causal solution $\bfW_t$ to the equation
\begin{equation*}
\bfW_t = \bfA_t\bfW_{t-1}+\bfB_t, \quad t\in \Z . 
\end{equation*}
It is given by
\begin{equation*}
\bfW _t=\sum _{i=-\8}^t\bfA_t\cdots \bfA_{i}\bfB_{i-1}+\bfB_i\stackrel{d}{=}\bfW.
\end{equation*} 

The stochastic iteration \eqref{affine} and its variants have been studied since the seventies,  they have found numerous applications in finance, insurance, telecommunication, time series analysis and they still attract a lot of attention. In particular, the tail behavior of the stationary solution $\bfW$ is of vital interest to the 
risk management (\cite{embrechts:kluppelberg:mikosch:1997}, \cite[Sec. 7.3]{mcneil:fre:embrechts:2015}).
It provides also moment conditions for statistical models which are crucial in parameter estimation problems (e.g. parameter estimation for GARCH processes).
 For an overview
we refer the reader to 
Buraczewski et al. \cite{buraczewski:damek:mikosch:2016}). 



The first set of conditions implying regular behavior of $\bfW$ in the sense of \eqref{directions} below 
was formulated by Kesten \cite{kesten:1973}. 
Since then, the Kesten condition and its extensions have been used 
to characterize tails in various situations, 
an essential feature being 
the same tail behavior in all directions \cite{alsmeyer:mentmeier2012, guivarch:lepage:2015, buraczewski:damek:guivarch2009}. 
To put it simply, there is an $\alpha>0$ and a measure on $\R ^d$ being the weak limit of
\begin{equation}\label{directions}
x^{\a}\P (x^{-1}\bfW \in \cdot ), \quad \mbox{when} \quad x\to \8.
\end{equation}
The behavior \eqref{directions} follows from certain irreducibility or homogeneity of the action 
of the group generated by the support of the law of $\bfA$: random shocks circulate over all directions, so that coordinate-wise tail behavior is the same.
However, this property is not necessarily shared by all models interesting both from theoretical and applied perspective 
\cite{horvath:boril:2016,matsui:mikosch:2016,matsui:pedersen:2019,mentemeier:wintenberger:2020,pedersen:wintenberger,sun:chen:2014}. 
Therefore, SREs with more general $\bfA$ are both challenging and desirable. 

Notice that already
for SREs with diagonal matrices $\bfA=diag(A_{11},\ldots,A_{dd})$, the tail indices of particular coordinates may be different: each coordinate satisfies the 
corresponding 
univariate SRE and they do not interact. This leads to ``non standard'' or ``vector valued'' regular variation  \cite{damek:2021,mentemeier:wintenberger:2020, resnick:2007}.
Then, naturally triangular matrices $\bfA$ occur, 
which provides SREs with partial interactions between coordinates,
and therefore considering them is a natural next step. However, the existing methods cannot be applied and a new approach is needed. It has been partly developed in 
\cite{damek:matsui:swiatkowski:2019, damek:zienkiewicz:2017, matsui:swiatkowski} and  now we propose a complete solution for the case of 
$2\times 2$ upper triangular matrices
$\bfA=[A_{ij}]$ 
(i.e.\ $A_{21}$ is the only one being identically zero). Even then the proof is quite involved and uses a broad range of methods that will be gradually explained.

Write $W=(W_1, W_2)$ and under natural conditions, 
we obtain that $W_1, W_2$ are regularly varying with possibly different indices. 
For $(\bfA ,\bfB)$ we assume either ``Kesten-Goldie'' 
condition, 
\begin{equation}\label{kes}
\E |A_{ii}|^{\a _i}=1\quad \E |B_i|^{\a _i}<\8 \quad \mbox{for some}\quad \a_ i>0
\end{equation}
or ``Grey'' condition,  
\begin{equation}\label{gr}
\E |A_{ii}|^{\a _i}<1\quad \mbox{and}\quad   B_i\ \mbox{regularly varying with index}\quad \a_ i>0.
\end{equation}
The regular variation of $W_2$ follows directly from the univariate results. 
The tail of $W_1$, however, 
is determined by all 
 the entries of $(\bfA , \bfB )$. We prove that 
\begin{equation}\label{asymone}
\P (\pm W_1>x)\sim c_{\pm}x^{-\min (\a _1,\a _2) }\ell (x), \quad \mbox{as}\ x\to \8 
\end{equation}
for an appropriate slowly varying function $\ell$ and $c_{+}+c_->0$ (see Theorems \ref{theorem:main1}, \ref{main1}, \ref{main}).  
Clearly, some extra integrability assumptions are needed similar to the ones usually used in the one dimensional case. 
They are formulated in Section \ref{mainresults}. Although for the ``Grey case'' $\ell$ comes from the regular behavior of $B_1, B_2$, 
for the ``Kesten-Goldie case'' the presence of slowly varying functions is due to mutual interaction between the entries of $(\bfA , \bfB )$. This   
is a novelty that has not yet been observed in the case of SRE. 
The latter 
phenomena appear only when $\a _1=\a _2=:\a $ and then $\ell (x)=(\log x)^{\beta }$,
with $\beta =1, \a $ or $\a \slash 2$. Most of our effort is concentrated on this case and already for $2 \times 2$
matrices the proof is very technical.
 
Under the setting \eqref{kes}, with all the entries of $\bfA, \bfB$ being positive and 
$\a _1\neq \a _2$, \eqref{asymone} was obtained in \cite{damek:matsui:swiatkowski:2019} with $\ell (x)=1$. 
Later on \eqref{asymone} was generalized to $d\times d$ matrices under the assumption $\E A_{ii}^{\a _i}=1$, $A_{ii}>0$,
 with $\a _1,...,\a _d$ being all different (\cite{matsui:swiatkowski}). Then
\begin{equation*}
\P (W_i>x)\sim c_ix^{-\wt \a _i},\quad c_i>0,
\end{equation*}
where $\wt \a _i $ depends on $\a _i,...,\a _d$. Moreover, the case
of $2\times 2$ matrices with $A_{11}=A_{22}>0$ was treated in \cite{damek:zienkiewicz:2017}. 
However, not much has been done when $\a _i=\a _j$ for some $i\neq j$ but $A_{ii}, A_{jj}$ are not equal almost surely.
There are only rough estimates 
\begin{equation}
c_i x^{-\wt \a _i}\leq \P (W_i>x)\leq C_i x^{-\wt \a _i}(\log x)^{\beta _i}
\end{equation}
(see \cite{WS}) again under assumption of positivity of $\bfA, \bfB$. It is not clear how to make use of both our approach and \cite{matsui:swiatkowski,WS} to obtain a definitive answer.    

There are various financial models that satisfy \eqref{affine} and 
in order to prove that the finite dimensional distributions of the corresponding stochastic process $X$ are regularly varying, the results \cite{kesten:1973,goldie:1991,buraczewski:damek:guivarch2009,alsmeyer:mentmeier2012,guivarch:lepage:2015} 
have been used intensively, e.g. in \cite{basrak:davis:mikosch:2002, matsui:mikosch:2016, matsui:pedersen:2019, pedersen:wintenberger}. 
The concept of regular variation is convenient to study extremal behavior of the process $X$ in terms of the maxima or extremal indices. For 
GARCH(p,q), bivariate GARCH(1,1) or BEKK-ARCH processes, when assumptions of \cite{kesten:1973, buraczewski:damek:guivarch2009,alsmeyer:mentmeier2012}
 or \cite{guivarch:lepage:2015} are applicable,  regular variation was studied in 
\cite{basrak:davis:mikosch:2002, matsui:mikosch:2016, matsui:pedersen:2019} and \cite{pedersen:wintenberger},  
and conclusions for the extremal properties of $X$ have been obtained, see  \cite{starica:1999, matsui:mikosch:2016}. 

The BEKK-ARCH process, introduced by Engle and Kroner \cite{engle:kroner:1995} and originally defined by a non-affine recursion, has been written as \eqref{affine} by Pedersen and Wintenberger \cite{pedersen:wintenberger}. They studied the regular behavior when assumptions of \cite{alsmeyer:mentmeier2012} or \cite{buraczewski:damek:guivarch2009} are applicable. Recent results on SREs with diagonal matrices \cite{damek:2021, mentemeier:wintenberger:2020} allow to study diagonal BEKK models typically used in 
finance due to their relatively simple parametrization (see Bauwens et al. \cite{bauwens:laurent:rombouts:2006}). Also BEKK-ARCH with triangular matrices has been of interest (see \cite{matsui:pedersen:2019}) and then the results of this paper as well as the multivariate ones \cite{matsui:swiatkowski} are applicable. 

The remainder of the paper is organized as follows. 
In Section \ref{section:model:stationary}, we describe the model and prove existence of a unique stationary solution. 
The main results 
are presented in Section \ref{mainresults}, where we make distinction between 
$\alpha_1\neq \alpha_2$ and $\alpha_1= \alpha_2$, the latter being much more involved. The proofs are contained in Sections \ref{sec:pr:theorem:main1} and \ref{proofs} respectively.
In Section \ref{sec:uni:SRE} we provide a formula for the Goldie constants in the univariate SRE, which is frequently 
used in the previous sections and which is interesting in itself. 
For readers convenience a list of frequently used symbols is given just before  acknowledgments. 

We close this section by introducing some notation used throughout the paper.
For functions $f,g:\R\to \R$, $f(x) \sim g(x) $ means that $\lim_{x\to \infty} f(x)/g(x)\to 1$. 
For a real number $a$ we write $a^+=\max (a,0)$, $ a^-=-\min (a,0)$ and moreover, $\log^+ a=\log(1\vee a)$. 
For a vector $\bfx\in \R^d$, $|\bfx|$ denotes its Euclidean norm and
for a $d\times d$ matrix $\bfA$ we use the matrix norm;
\[
 ||\bfA||=\sup_{\bfx\in\R^d,\,|\bfx|=1}|\bfA \bfx|. 
\]


\section{Bivariate stochastic recurrence equations}
\label{section:model:stationary}
We start with description of the model as well as the 
conditions for stationarity of the related time series.

\subsection{The model}
We consider the bivariate SRE;
\begin{align}\label{bivSRE}
 \bfW_t =\bfA_t \bfW_{t-1}+\bfB_t, \quad t \in \Z, 
\end{align}
where 
\beao
\bfW_t= 
\begin{pmatrix}
W_{1,t}  \\ W_{2,t}   
\end{pmatrix},\quad
\bfA_t=
\begin{pmatrix}
 A_{11,t} & A_{12,t} \\
0 & A_{22,t}
\end{pmatrix}
\quad \mathrm{and}\quad \bfB_t= \begin{pmatrix}
 B_{1,t}  \\ B_{2,t} , 
\end{pmatrix}
\eeao
and $(\bfA_t, \bfB_t)$ an i.i.d. sequence. Unlike in \cite{damek:matsui:swiatkowski:2019} we do not assume here any restriction on the sign of the entries of matrices and vectors, they are just real numbers.
It is convenient to write the SRE also in a coordinate-wise form; 
\begin{align}
 W_{1,t} & =
 A_{11,t}W_{1,t-1} + D_t, \label{uniSRE1} \\
 W_{2,t} & = A_{22,t} W_{2,t-1} + B_{2,t}, \label{uniSRE2} 
\end{align}
where 
\begin{equation}\label{D}
D_t:=B_{1,t}+A_{12,t}W_{2,t-1}. 
\end{equation}
For further convenience we denote
for $t\in\mathbb{Z}$, 
\begin{align*}
  \bfPi_{t,s} & =\bfA_t\cdots \bfA_s,\,t\ge s,\quad
 \bfPi_{t,s}=\bfI,\,t<s\quad \mathrm{and}\quad \bfPi_t =\bfPi_{t,1}, \\
  \Pi_{t,s}^{(i)}& =\Pi _{j=s}^t A_{ii,j},\,t\ge s,\,i=1,2\quad
 \mathrm{and}\quad \Pi_{t,s}^{(i)}=1,\,t<s\quad \mathrm{and}\quad \Pi_t^{(i)}=\Pi_{t,1}^{(i)},
\end{align*}
where $\bfI$ is the bivariate identity matrix. 

\subsection{Stationarity}\label{stac}
Starting from \cite{kesten:1973} there is a series of results \cite{brandt:1986}, \cite{bougerol:picard:1992a} for the existence of stationary solution to SRE (see also \cite[Sec.2.1]{buraczewski:damek:mikosch:2016} for a review). The notion of the ``so called''  top Lyapunov exponent 
\beao
\gamma=\inf_{n\ge 1} n^{-1} \E \log \| 
\bfPi_{n}\| 
\eeao
associated with the sequence $(\bfA _t)$ is essential. If $\gamma$ is strictly negative and 
\begin{equation}\label{logmoments}
\E (\log ^+|\bfB|+\log ^+\| \bfA\| )<\8,  
\end{equation}
then SRE \eqref{bivSRE} has a unique strictly stationary solution
 (\cite{bougerol:picard:1992a}, see also  
 \cite[Theorem 4.1.4]{buraczewski:damek:mikosch:2016}) given by the infinite
 series, 
 \begin{align}
\label{solbibSRE}
  \bfW_t=\sum_{i=-\infty}^t \bfPi_{t,i+1}\bfB_i. 
 \end{align}
When matrices $\bfA$ are block triangular, we refer to \cite{GGO} and \cite{S} for
conditions that imply negativity of $\gamma$. For the bivariate
case we will use the following statement.   
\begin{proposition}
\label{condi:stationarity}
 Assume $\E \log |A_{11}|<0,\,\E \log |A_{22}|<0,\,\E \log^+ |\bfB|<\infty
 $ and $\E\|\bfA\|^\varepsilon <\infty$ for some $\varepsilon>0$. Then $\gamma $ is strictly negative.
\end{proposition}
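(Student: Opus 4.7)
The plan rests on the upper triangular structure. Since each $\bfA_t$ is upper triangular, so is the product, and a direct induction shows
\[
\bfPi_n=\begin{pmatrix}\Pi_n^{(1)} & C_n\\ 0 & \Pi_n^{(2)}\end{pmatrix},\qquad C_n=\sum_{k=1}^n \Pi_{n,k+1}^{(1)}\,A_{12,k}\,\Pi_{k-1,1}^{(2)},
\]
so that $\|\bfPi_n\|$ is comparable, up to an absolute constant, to $|\Pi_n^{(1)}|\vee|\Pi_n^{(2)}|\vee|C_n|$. Because $\E\|\bfA\|^\varepsilon<\infty$ entails $\E\log^+\|\bfA\|<\infty$, the Furstenberg--Kesten theorem gives $\gamma=\lim_n n^{-1}\E\log\|\bfPi_n\|$, so it suffices to show that $n^{-1}\E\log\|\bfPi_n\|$ is eventually bounded above by a negative constant.

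The crux is to convert the log-moment hypothesis $\E\log|A_{ii}|<0$ into an $L^{\varepsilon'}$-contraction. The standard lemma I would invoke says: if $\E|A_{ii}|^\varepsilon<\infty$ and $\E\log|A_{ii}|<0$, then there exists $\varepsilon'\in(0,\varepsilon]$ with
\[
\rho_i:=\E|A_{ii}|^{\varepsilon'}<1,\qquad i=1,2.
\]
This is proved by noting that $s\mapsto \E|A_{ii}|^{s}$ is convex, equals $1$ at $s=0$, and has right derivative at $0^+$ equal to $\E\log|A_{ii}|$; the latter follows from dominated convergence applied to $(|A_{ii}|^s-1)/s$, which is increasing in $s$, bounded below by $\log|A_{ii}|$, and bounded above at $s=\varepsilon$ by an integrable function. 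I would shrink $\varepsilon'$ further if necessary so that $\varepsilon'\le 1$, making the subadditivity $|x+y|^{\varepsilon'}\le|x|^{\varepsilon'}+|y|^{\varepsilon'}$ available.

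With such an $\varepsilon'$ fixed, I would estimate $\E\|\bfPi_n\|^{\varepsilon'}$ entry by entry. The diagonal contributions are $\rho_1^n$ and $\rho_2^n$ by independence. For the off-diagonal entry, subadditivity and independence yield
\[
\E|C_n|^{\varepsilon'}\le \mu\sum_{k=1}^n \rho_1^{n-k}\rho_2^{k-1},\qquad \mu:=\E|A_{12}|^{\varepsilon'}\le \E\|\bfA\|^{\varepsilon'}<\infty,
\]
and with $r:=\rho_1\vee\rho_2<1$ the sum is $O(nr^n)$ (the only case where the factor $n$ is actually needed is $\rho_1=\rho_2$). Combining these,
\[
\E\|\bfPi_n\|^{\varepsilon'}\le C(n+1)r^n,
\]
and Jensen's inequality gives
\[
\tfrac{1}{n}\E\log\|\bfPi_n\|=\tfrac{1}{n\varepsilon'}\E\log\|\bfPi_n\|^{\varepsilon'}\le \tfrac{1}{n\varepsilon'}\log\E\|\bfPi_n\|^{\varepsilon'}\le \tfrac{\log r}{\varepsilon'}+o(1),
\]
so passing $n\to\infty$ yields $\gamma\le \tfrac{\log r}{\varepsilon'}<0$.

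The only non-routine step is the existence of $\varepsilon'$ with $\rho_i<1$; this is where the moment hypothesis $\E\|\bfA\|^\varepsilon<\infty$ is genuinely used, because without it one cannot in general upgrade a negative log-moment to a strict contraction in some $L^{\varepsilon'}$. The rest is bookkeeping.
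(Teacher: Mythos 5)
Your argument is correct, and since the paper simply defers to \cite[Proposition 2.1]{damek:matsui:swiatkowski:2019} (stating that the proof carries over from the positive-entry case), what you have written out is almost certainly the same underlying mechanism: upgrade the negative log-moments to strict $L^{\varepsilon'}$-contractions $\E|A_{ii}|^{\varepsilon'}<1$ via convexity of $s\mapsto\E|A_{ii}|^s$, bound the triangular product entrywise, and conclude via Jensen. Two small remarks for completeness: the two values of $\varepsilon'$ you obtain for $i=1,2$ can indeed be reconciled by taking the minimum, because convexity of $f(s)=\E|A_{ii}|^s$ with $f(0)=1$ and $f(\varepsilon_i')<1$ forces $f(s)<1$ for all $0<s\le\varepsilon_i'$; and the hypothesis $\E\log^+|\bfB|<\infty$ is genuinely unused in your proof, which is fine since $\gamma$ depends only on $(\bfA_t)$ — that hypothesis appears in the Proposition because it is needed for the subsequent existence of the stationary solution, not for the sign of $\gamma$.
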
 
Proposition \ref{condi:stationarity} was proved in 
\cite[Proposition 2.1]{damek:matsui:swiatkowski:2019} for matrices and vectors with positive entries, 
but the proof in the general case is the same. 
Although the assumptions are a little bit stronger than necessary 
but they are anyway satisfied in our main theorems (Theorems \ref{theorem:main1},\ref{main} 
and \ref{main1}). 
For further discussion we refer to \cite{damek:matsui:swiatkowski:2019}.
 

{\it Due to stationarity we may restrict our attention to the tails of $W_{i,0}$. 
The subscript $0$ in $A_{ij,0}$, $B_{i,0}$ and
$W_{i,0}$, etc. will be sometimes omitted and we will write
$A_{ij}$, $B_{i}$ and $W_{i}$ for generic random variables. 
}  

\subsection{Componentwise decomposition}\label{component}
We will work on the 
component-wise representation of the solution 
$\bfW_t =(W_{1,t},W_{2,t})$ given by
\begin{align}
\label{componentwiseSol1}
W_{1,t} &= \sum_{i=1}^\infty \Pi_{t,t+2-i}^{(1)} D_{t+1-i},\\
\label{componentwiseSol2}
W_{2,t} &= \sum_{i=1}^\infty \Pi_{t,t+2-i}^{(2)} B_{2,t+1-i}, 
\end{align}
which are well-defined. 
 The expressions 
\eqref{componentwiseSol1} and \eqref{componentwiseSol2} of the solution  
may be proved in various ways but under our hypotheses the proof is particularly simple. Indeed, the assumptions we require in the 
main theorems (\ref{theorem:main1},\ref{main} and \ref{main1}) imply existence of  $0<\eps <1 $ such that
\begin{equation}\label{eps}
\E |A_{ii}|^{\eps}<1,
\end{equation}
\begin{equation}\label{epss}
\E |\bfB|^{\eps}<\8 \quad \mbox{and}\quad \E |A_{12}|^{\eps}<\8.
\end{equation}
Then, first we see by the triangle inequality that $\E |W_2|^{\eps }<\8$, hence $\E |D|^{\eps }<\8$. 
Again by the triangle inequality
$\E |W_1|^{\eps }<\8$ follows. 
Thus, \eqref{componentwiseSol1} and \eqref{componentwiseSol2} are convergent. 
Substituting \eqref{componentwiseSol1} and \eqref{componentwiseSol2} to \eqref{bivSRE} we see that 
$(W_{1,t},W_{2,t})$ satisfies the equation and by uniqueness 
the stationary solution $\bfW_t$ satisfies \eqref{componentwiseSol1} and \eqref{componentwiseSol2}. 
For more details we refer to Section 2.2 of \cite{damek:matsui:swiatkowski:2019}. 

In order to study tail asymptotics, we will further decompose $W_{1,t}$.  
Let $\wh W_1$ and $\wt W_1$ be respectively unique
stationary solutions of SREs 
\begin{align}
\label{componentwiseSRE1}
\wh W_{1,t} &=  A_{11,t} \wh W_{1,t-1}+B_{1,t}, \\
\label{componentwiseSRE2}
\wt W_{1,t} &=  A_{11,t} \wt W_{1,t-1}+ \wt D_{t}, \quad \wt D_{t}=A_{12,t}W_{2,t-1}. 
\end{align}
Then
\begin{align}
\label{decomp1}
W_{1,t} = \wh W_{1,t} + \wt W_{1,t},\quad t \in\Z. 
\end{align} 
By the same reasoning as above,
both \eqref{componentwiseSRE1} and \eqref{componentwiseSRE2} have unique
solutions respectively and they may be written as 
\begin{align}
\label{srestasol1}
\wh W_{1,t} =  \sum_{i=1}^\infty \Pi_{t,t+2-i}^{(1)}B_{1,t+1-i}, 
\end{align}
\begin{equation}
\label{srestasol2}
\wt W_{1,t} =  \sum_{i=1}^\infty \Pi_{t,t+2-i}^{(1)} A_{12,t+1-i} W_{2,t-i}, 
\end{equation}
where the series converge absolutely almost surely. 
The advantage of this approach is that we may compare the tails of both $\wh W_{1,t}$ and $\wt W_{1,t}$ and decide which of them is 
the heavier and determines the asymptotics of $W_{1,t}$. 

\section{Main results}
\label{mainresults}
As already mentioned, our assumptions are modeled on those typically used for the univariate equation $X=AX+B$.
They are either Kesten-Goldie assumption when the tail of 
the stationary solution is determined by $A$, or Grey assumption when $B$ plays the dominant role. 
\begin{itembox}[l]{$\mathcal A(\a )$: Kesten-Goldie assumption}
	$\bullet$ There exists $\alpha>0$ such that
	$\E|A|^\alpha=1,\,\E|B|^\alpha<\infty$ and $\E |A|^\alpha \log^+
	|A|<\infty$. \\
	$\bullet$\ $\P(Ax+B= x)<1$ for every $x \in \R$. \\
	$\bullet$\ The conditional law of $\log |A|$ given
	$\{A\neq 0\}$ is non-arithmetic.
\end{itembox}
\begin{itembox}[l]{$\mathcal B(\a )$: Grey assumption} 
	There exist $\alpha, \eta >0$ such that
	$\E|A|^\a<1,\,\E|A|^{\alpha +\eta} <\infty$ and 
	\begin{align}
	\label{slowvary:def1}
	\P(B>x)\sim p_\a\, x^{-\a }\ell (x)\quad \text{and}\quad \P(-B>x)\sim q_\a\, x^{-\a}\ell (x)
	\end{align}
	with $p_\a ,\,q_\a \ge 0,\,p_\a +q_\a=1$, where $\ell (x)$ is a
	slowly varying function. 
\end{itembox}

We assume that $(A_{ii}, B_{i})_{i=1,2}$ satisfy $\mathcal{A}(\a _i)$ or $\mathcal{B}(\a _i)$.
Then the tail behavior of $W_2$ follows directly form the univariate results (Theorem \ref{lemma:4447BDM}): 
\begin{align}
\label{tail:w_2}
 \P(\pm W_2>x) \sim c_{2,\pm}\, x^{-\a _2}\ell_2(x), \quad \mbox{as}\ x\to\infty
\end{align}
where $\ell_2=1$ if $\mathcal{A} (\a _2)$ holds and $\ell_2$ is a slowly varying function if $\mathcal{B} (\a _2)$ holds. 
Here constants $c_{2,+}$ and $c_{2,-}$ 
are given by
\eqref{constants:uni} with $(A,B,X)=(A_{22},B_2,W_2)$. Strict positivity of $c_{2,+}+c_{2,-}$ means that
$$
\lim _{x\to \8}\P (|W_2|>x)x^{\a _2}\ell _2(x)=c_{2,+}+c_{2,-}>0$$ 
and if $\mathcal{B}(\a _2)$ holds, it is straightforward from \eqref{constants:uni}. For $\mathcal{A}(\a _2)$ an extra argument is needed, see \cite{goldie:1991}. 

The same estimate for $W_1=\wh W_1+\wt W_1$ is more delicate. 
Since $\wh W_1$ satisfies \eqref{componentwiseSRE1}, it is regularly varying with index $\alpha_1$. 
Then the tail of $W_1$ is determined by $\min (\alpha_1 ,\alpha_2)$. 
When $\a _1 <\a _2$, then the noise component $\wt D$ of $\wt W_1$ in \eqref{componentwiseSRE2} satisfies $\E|\wt D|^{\alpha_1}$, so the same holds for 
$D$ in \eqref{componentwiseSol1}. 
Thus the tail of $W_1$ is determined by the autoregressive part $A_{11}$. 
When $\a _2< \a _1$ the situation is quite the opposite 
and $W_2$ via the noise term $\wt D$ of $\wt W_{1}$ determines the tail of $W_1$. 
The same happens when $\a _1=\a _2$, though then analysis of $\wt W_{1}$ in far more complicated as 
explained after Theorem \ref{main1} and at the beginning of Section \ref{subsec:pf:main}.

In what follows, we start with the case $\a_1\neq \a_2$ (Theorem \ref{theorem:main1}) and then study the case $\a _1=\a _2=\a$ 
(Theorems \ref{main1} and \ref{main}). 
Since $\wh W_1$ has the same univariate form \eqref{componentwiseSRE1}, it is immediate to see 
\begin{align}
\label{tail:w_1}
\P(\pm \wh W_1>x) \sim c_{1,\pm}\, x^{-\a _1}\ell_1(x)
\end{align}
where $\ell_1=1$ if $\mathcal{A} (\a _1)$ holds and $\ell_2$ is a slowly varying function if $\mathcal{B} (\a _1)$ holds. 
The constants are again determined by 
%
\eqref{constants:uni} with $(A,B,X)=(A_{11},B_1,W_1)$.


To formulate the results precisely we need some notation. Let
\begin{equation}\label{rho}
	\rho_{1}=\E|A_{11}|^{\alpha_1} \log |A_{11}|, \quad  \rho _2=\E|A_{22}|^{\alpha _2}\log |A_{22}|, 
\end{equation}
\begin{equation}\label{Mn}
M_n=\sum_{i=1}^n \Pi_{0,2-i}^{(1)}A_{12,1-i}
\Pi_{-i,1-n}^{(2)}, 
\end{equation}
and
\begin{equation*}
 w_{n,\pm}=\E (M_n^{\pm})^{\a _2} \quad \text{and}\quad w_n=\E |M_n|^{\a _2}.
\end{equation*} 
We will prove later on that, if $\a _1> \a _2$ 
the limits 
\begin{equation}\label{w}
w=\lim _{n\to \8} w_n 
>0\quad \mbox{and}\quad w _{\pm}=\lim _{n\to \8} w_{n,\pm}
\end{equation}
exist and $w, w_{\pm}$  appear in the tail constants of Theorem \ref{theorem:main1} (Table \ref{table:constants}). 

\medskip

Now we are ready to formulate the main results. We have $4$ patterns of the tail behavior of $W_1$ 
depending on whether $\mathcal A$ or $\mathcal B$ are satisfied and which of $\alpha_1,  \alpha_2$ is larger. 
\begin{theorem}
\label{theorem:main1}
Suppose that $(A_{ii}, B_{i})_{i=1,2}$ satisfy $\mathcal{A}(\a _i)$ or $\mathcal{B}(\a _i)$ for $\a _1\neq \a _2$,
and moreover,  
\begin{equation}\label{C2}
  \P (A_{12}= 0)<1 \quad \mbox{and}\quad \E |A_{12}|^{
  \alpha_1 \wedge \alpha_2 }<\infty.
\end{equation}
 Then, if $\a _1<\a_2$,
\begin{align}\label{firstv}
 \P(\pm W_1>x) \sim \Big \{
\begin{array}{ll}
\bar c_\pm\, x^{-\alpha_1}  & \mbox{if} \quad\mathcal{A}(\a _1)\\
 \bar c_{\pm }\, x^{-\alpha_1}\ell_1(x)    & \mbox{if} \quad\mathcal{B}(\a _1) 
\end{array}
\end{align}
and if $\a _1>\a_2$,
\begin{align}\label{secondv}
\P(\pm W_1>x) \sim \Big \{
\begin{array}{ll}
\tilde c_\pm\, x^{-\alpha_2}  & \mbox{if} \quad\mathcal{A}(\a _2) \ \& \ \P (A_{22}=0)=0 
\\
\tilde c_{\pm }\,x^{-\alpha_2}\ell_2(x) & \mbox{if} \quad\mathcal{B}(\a _2)  
\end{array},
\end{align}
where constants are given in Table \ref{table:constants} and 
$\ell_i,\,i=1,2$ are slowly varying functions defined from \eqref{slowvary:def1}. \\
Moreover, 
 \begin{equation}\label{possum}
  \tilde c_++\tilde c_->0 \ \text{in}\, \eqref{secondv} \quad \mbox{and} \quad 
\bar c_{+}+\bar c_{-}>0 \ \mbox{in}\ \mathcal{B}(\a _1)\ \text{of}\ \eqref{firstv}.  
   \end{equation}
Finally, if all the entries in $\bfA , \bfB$ are non negative or 
\begin{align}
\label{posiconst:barc}
 \E \big[|\wh W_1|^{\a_1} -|A_{11}\wh W_1|^{\a_1} \big] \neq \E \big[|\wt W_1|^{\a_1}-|A_{11}
 \wt W_1|^{\a_1} \big]
\end{align} 
holds, then 
\begin{equation}\label{positivitybarc}
\bar c_+ +\bar c_->0 \ \mbox{in} \ \mathcal{A}(\a _1)\  \text{of}\ \eqref{firstv}.  
\end{equation}
\end{theorem}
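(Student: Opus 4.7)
The plan is to reduce the analysis of $W_1$ to univariate results by using the decomposition $W_1 = \hat W_1 + \tilde W_1$ from \eqref{decomp1}. Since $\hat W_1$ satisfies the genuinely univariate SRE \eqref{componentwiseSRE1}, the univariate Kesten--Goldie ($\mathcal{A}(\alpha_1)$) or Grey ($\mathcal{B}(\alpha_1)$) theorem delivers \eqref{tail:w_1} immediately. The substance of the proof is the tail analysis of $\tilde W_1$ through its noise $\tilde D = A_{12} W_2$ and then the combination with the tail of $\hat W_1$; note that although $\tilde W_1$ formally solves \eqref{componentwiseSRE2}, its noise $\tilde D$ is not independent of the past of $\tilde W_1$ because both depend on the common sequence $(A_{12,s}, A_{22,s})$, so the analysis proceeds via the series representation \eqref{srestasol2}.

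\emph{Case $\alpha_1 < \alpha_2$.} From \eqref{C2}, \eqref{tail:w_2} and the causal independence of $A_{12,t}$ from $W_{2,t-1}$ we obtain $\E|\tilde D|^{\alpha_1} < \infty$, and iterating \eqref{componentwiseSRE2} via \eqref{srestasol2} gives $\E|\tilde W_1|^{\alpha_1} < \infty$. Under $\mathcal{B}(\alpha_1)$ this makes $\tilde W_1$ tail-negligible compared with $\hat W_1$, whose tail is governed by the regularly varying $B_1$, and the second branch of \eqref{firstv} follows. Under $\mathcal{A}(\alpha_1)$ the equality $\E|A_{11}|^{\alpha_1}=1$ means $\tilde W_1$ may itself carry a genuine $\alpha_1$-tail of Kesten type, so one runs Goldie's implicit renewal argument separately on $\hat W_1$ and on the series \eqref{srestasol2} for $\tilde W_1$, then adds the contributions to recover the first branch of \eqref{firstv}.

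\emph{Case $\alpha_2 < \alpha_1$.} Now $W_2$ is regularly varying of index $\alpha_2$, and a Breiman-type argument based on the causal independence of $A_{12,t}$ and $W_{2,t-1}$, together with \eqref{C2}, yields that $\tilde D$ is regularly varying of index $\alpha_2$ with explicit constants. Convexity of $s \mapsto \E|A_{11}|^s$ combined with $\E|A_{11}|^{\alpha_1} \le 1$ forces $\E|A_{11}|^{\alpha_2} < 1$, so summing the series \eqref{srestasol2} places $\tilde W_1$ in the Grey regime; its tail constants are read off from the limit $w_{\pm}$ in \eqref{w}. Since $B_1$ has finite $\alpha_2$-moment (under either $\mathcal{A}(\alpha_1)$ or $\mathcal{B}(\alpha_1)$ with $\alpha_1 > \alpha_2$), $\hat W_1$ is tail-negligible at scale $\alpha_2$ and $W_1 \sim \tilde W_1$, producing \eqref{secondv} with the constants listed in Table \ref{table:constants}.

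For strict positivity, the Grey-type conclusions are automatic from the explicit Goldie formula of Section \ref{sec:uni:SRE}, because $D$ or $\tilde D$ is genuinely regularly varying of the claimed index; this disposes of \eqref{possum} outside the $\mathcal{A}(\alpha_2)$ subcase. Under $\mathcal{A}(\alpha_2)$, the Breiman constant of $A_{12} W_2$ is strictly positive once $\P(A_{12} = 0) < 1$, $\P(A_{22} = 0) = 0$ and $c_{2,+} + c_{2,-} > 0$, giving $\tilde c_+ + \tilde c_- > 0$. The genuinely delicate case is $\bar c_+ + \bar c_- > 0$ in the $\mathcal{A}(\alpha_1)$ branch of \eqref{firstv}: Goldie's formula presents $\bar c_+ + \bar c_-$ as a positive multiple of $\E[|W_1|^{\alpha_1} - |A_{11} W_1|^{\alpha_1}]$, and substituting $W_1 = \hat W_1 + \tilde W_1$ one sees that this quantity can vanish only in a degenerate coincidence between the two Goldie terms associated with $\hat W_1$ and $\tilde W_1$; the condition \eqref{posiconst:barc} is precisely what forbids this equality, while in the fully non-negative case no such cancellation is possible a priori. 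The main technical obstacle is running Goldie's implicit renewal argument for the dependent SRE \eqref{componentwiseSRE2} sharply enough to identify the individual constants and extract the non-cancellation encoded in \eqref{posiconst:barc}; a secondary subtlety is that the Breiman step in the $\alpha_2 < \alpha_1$ case operates at the boundary moment $\alpha_2$ of $A_{12}$ supplied by \eqref{C2}, so the classical Breiman lemma must be replaced by direct estimates on the series \eqref{srestasol2}.
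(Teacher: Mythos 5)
Your overall scaffolding (decomposition $W_1=\wh W_1+\wt W_1$, reduction to univariate theory, Breiman for products) is the right one, but two central steps are not sound as stated, precisely because of the dependence structure that makes the triangular case harder than the scalar case.

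\textbf{Case $\a_1<\a_2$ under $\mathcal A(\a_1)$.} You propose to run Goldie's implicit renewal argument separately on $\wh W_1$ and on $\wt W_1$ ``then add the contributions.'' This does not recover the tail of $W_1$: $\wh W_1$ and $\wt W_1$ are driven by the same sequence $(A_{11,t})$, hence are heavily dependent, and the tail of their sum is not the sum of their tails (nor is the Goldie integrand additive in $W_1=\wh W_1+\wt W_1$, since $|a+b|^{\a_1}\neq|a|^{\a_1}+|b|^{\a_1}$). The paper applies Goldie's Theorem~2.3 / Lemma~9.4 directly to the undecomposed recursion $W_{1,0}=D_0+A_{11,0}W_{1,-1}$, exploiting only $W_{1,-1}\perp A_{11,0}$, and the constant $\bar c_\pm$ in Table~\ref{table:constants} is then expressed through the joint law of $(D_0, A_{11}, W_1)$. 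The decomposition $W_1=\wh W_1+\wt W_1$ enters only in the separate positivity argument (where one merely needs the one-sided inequality $\P(|W_1|>x)\ge\P(|\wh W_1|>(1+\zeta)x)-\P(|\wt W_1|>\zeta x)$, which is valid without independence).

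\textbf{Case $\a_2<\a_1$ under $\mathcal A(\a_2)$.} You propose to observe $\E|A_{11}|^{\a_2}<1$ and then ``place $\wt W_1$ in the Grey regime'' via the SRE~\eqref{componentwiseSRE2}, reading the constants from $w_\pm$. But Grey's theorem (and the univariate regular-variation framework it rests on) requires the innovation to be independent of the current state and i.i.d. across time, and here $\wt D_t=A_{12,t}W_{2,t-1}$ and $\wt W_{1,t-1}$ share the whole past $(A_{22,s},B_{2,s})_{s\le t-1}$; the $\wt D_t$ themselves are strongly dependent across $t$. So one cannot invoke Grey's theorem for $\wt W_1$. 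The paper's proof instead truncates the series: $\wt W_1=\wt Z_s+\wt Z^s$, then splits $\wt Z_s=\wt Z_{s,1}+\wt Z_{s,2}$ with $\wt Z_{s,1}=M_s W_{2,-s}$, where $M_s$ and $W_{2,-s}$ \emph{are} independent; Breiman is applied to this product, $\wt Z_{s,2}$ is shown to have lighter tail, $\wt Z^s$ is shown to be geometrically small in $s$, and then $s\to\8$. In the $\mathcal B(\a_2)$ subcase the paper uses yet another tool (the Hult--Samorodnitsky theorem for infinite series of regularly varying terms applied to $\wt W_{1,0}=\sum_\ell M_{\ell+1}B_{2,-\ell-1}$), which your proposal does not address. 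In short, the dependence among the $\wt D_t$'s is the core difficulty, and your plan does not contain a device for handling it; simply appealing to the univariate theorems is a genuine gap.
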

The proof of Theorem \ref{theorem:main1} is given in
Section \ref{sec:pr:theorem:main1}.
\begin{remark}
$(\rm{i})$ Under $\mathcal{A}(\a _1)$ or $\mathcal{A}(\a _2)$ with $A_{ij}, B_i\geq 0,\,i,j=1,2$ Theorem \ref{theorem:main1} was proved in \cite{damek:matsui:swiatkowski:2019}.\\
$(\rm{ii})$ 
If $\mathcal{A}(\a _1)$ holds and the entries of $\bfA , \bfB$ are not all non negative, it is hard to prove that 
$$
\lim _{x\to \8}\P (|W_1|>x)x^{\a _1}=\bar c_++\bar c_->0.$$
The point is that in $W_{1,t}=A_{11,t}W_{1,t-1}+D_t$, $W_{1,t-1}$ and $D_t$ are dependent so that the argument of Goldie \cite{goldie:1991} does not work. We may use the decomposition $W_1=\wh W_1+\wt W _1$. If \eqref{posiconst:barc} holds then 
$\lim _{x\to \8}\P (|\wh W_1|>x)x^{\a}
\neq \lim _{x\to \8}\P (|\wt W_1|>x)x^{\a }$ and so \eqref{positivitybarc} follows. Otherwise the question remains open. 
Notice that	if we replace $A_{12,t}$ by $a A_{12,t}$, or $B_2$ by $aB_2$, $\,a\in \R$ in the original SRE
	\eqref{bivSRE}, then the solution of \eqref{componentwiseSRE2} becomes $\wt W_{1,t}=a
	\sum_{i=1}^\infty \Pi_{0,2-i}^{(1)} A_{12,1-i} W_{2,-i}$ and we obtain 
	\[
	|a|^{\a_1} \E\big [|\wt W_{1}|^{\a_1}-|A_{11} \wt W_{1}|^{\a_1} \big]
	\]
on the left hand side of \eqref{posiconst:barc}. Hence \eqref{posiconst:barc} may be  violated only at two values of $a$ which gives an impression that equality in 
	\eqref{posiconst:barc} is rather exceptional. 
Both sides of \eqref{posiconst:barc}, or equivalently, the tail constants 
are not easy to calculate $($see \cite[Section 3]{buraczewski:damek:mikosch:2016}$)$. Although there are several attempts 
$($cf. \cite{mikosch:samorodnitsky:tafakori}$)$ the results are far from being exhaustive.
\end{remark}

\begin{table}[h]
 \caption{Constants in Theorem \ref{theorem:main1}}
\begin{tabular}{c|c|c}
\multicolumn{2}{c|}{ Conditions }  & Constants \\
\hline
\multirow{4}{*}{$\mathcal A(\a_1)$} & \multirow{2}{*}{$A_{11}\ge
     0\ a.s.$} &
\multirow{2}{*}{$ \bar c_\pm= (\alpha_1 \rho_{1})^{-1} \E [((D_0+A_{11} W_1)^{\pm})^{\alpha_1}-((A_{11}W_1)^\pm)^{\alpha_1}] $} \\
& & \\
&\multirow{2}{*}{$\P(A_{11}<0)>0$}  & 
\multirow{2}{*}{$ \bar
 c_\pm= (2 \alpha_1 \rho_{1})^{-1} \E [|D_0+A_{11} W_1|^{\alpha_1}-|A_{11}W_1|^{\alpha_1}]$ }\\
& & \\
\hline 
\multirow{4}{*}{$\mathcal A(\a_2)$}& \multirow{2}{*}{$A_{22}\ge 0\ a.s.$}& \multirow{2}{*}{$ \tilde c_\pm= c_{2,+} w_{\pm}+c_{2,-}w_{\mp}$}\\
& & \\
& \multirow{2}{*}{$\P(A_{22}<0)>0$}& \multirow{2}{*}{$ \tilde
 c_+= \tilde c_-= c_2 w/2,\quad c_2/2 =c_{2,+}=c_{2,-} $}\\
& & \\ \hline
\multicolumn{2}{c|}{\multirow{2}{*}{$\mathcal B(\alpha_1)$}} & 
\multirow{2}{*}{$ \bar c_\pm= \frac{1}{2} \Big\{
\frac{1}{1-\E |A_{11}|^{\alpha_1} } \pm \frac{p_{\alpha_1}-q_{\alpha_1}}{1-\E(A_{11}^+)^{\alpha_1}+\E(A_{11}^-)^{\alpha_1}}\Big\} $} \\ 
\multicolumn{2}{c|}{} &  \\\hline
\multicolumn{2}{c|}{\multirow{2}{*}{$\mathcal B(\alpha_2)$}} & \multirow{2}{*}{$\tilde c_\pm = \sum_{i=1}^\infty
 w_{i,\pm}p_{\alpha_2}+w_{i,\mp} q_{\alpha_2}$} \\
\multicolumn{2}{c|}{}  & \\ \hline
\end{tabular}
\label{table:constants}
\end{table}

The case $\mathcal A(\a_i)$ with $\alpha_1 = \alpha_2 =\alpha$,
i.e. the case $\E|A_{ii}|^{\alpha}=1,\,i=1,2$ is much more involved.  
We distinguish two cases depending on whether $A_{11}=A_{22}\ a.s.$ or not. 
Accordingly we need the following common and specific conditions. 
\begin{itembox}[l]{Common assumptions in Theorems \ref{main1} and \ref{main}}
\begin{itemize}
	\item[\rm{[A1]}]\quad $\E \log |A_{ii}|<0$, $\E |A_{ii}|^{\a}=1$, $i=1,2$.
	\item[\rm{[A2]}]\quad there is $\eta >0$ such that $\E |A_{ij}|^{\a
		+\eta}+\E |B_i|^{\a +\eta}<\8,\,i,j=1,2$. 
        \item[\rm{[A3]}]\quad  $A_{22}\neq 0\,a.s.$
        \item[\rm{[A4]}]\quad $\log |A_{11}|$ is non lattice.
\end{itemize}
\end{itembox}
\begin{itembox}[l]{Assumptions specific to Theorem \ref{main} (Case $\P(A_{11}\neq A_{22})>0$)}
\begin{itemize}
\item[\rm{[A5]}]\quad $\log |A_{22}|$, $\log (|A_{11}||A_{22}|^{-1})$ are non arithmetic.
\item[{\rm [A6]}]\quad there is $\eta >0$ such that $\E |A_{11}|^{\a +\eta} |A_{22}|^{-\eta}<\8$, $\E |A_{12}|^{\a +\eta} |A_{22}|^{-\eta}<\8$. 
\end{itemize}
\end{itembox}

Before going to main results, we provide some intuition. 
When $\a _1=\a _2$, the tail of $$\wt W_1=\sum_{i=1}^\infty \Pi_{t,t+2-i}^{(1)} A_{12,t+1-i} W_{2,t-i}$$ is heavier than that of $\wh W_1$ and
the partial sum 
\begin{equation*}
\sum_{i=1}^{n} \Pi_{t,t+2-i}^{(1)} A_{12,t+1-i} W_{2,t-i}\quad \mbox{with}\quad n= \lfloor c \log x \rfloor
\end{equation*}
provides the asymptotics (see the beginning of Section \ref{proofs}). Our basic observation is
\begin{align}
\label{tailw1:breiman}
 \P(\wt W_1>x) \sim C_+ \E(M_n^+)^\a \P(W_2>x) + C_- \E(M_n^-)^\a \P(-W_2>x)\ \mbox{with}\ n= \lfloor c \log x \rfloor, 
\end{align}
where $c,C_+,C_-$ are positive constants depending on $\a$ and $A_{ii}$. 
Then as in Theorem \ref{theorem:main1}, the behavior of $\E (M_n^{\pm})^{\a}$ in \eqref{Mn} again plays the crucial role.
We consider two cases $\P(A_{11}\neq A_{22})>0$ and 
$\P(A_{11}\neq A_{22})=0$. 
\vspace{2mm}

Observe that in the case $[A_{11}=A_{22}\, a.s.]$, in view of [A3],
$A_{11}\neq 0$ a.s and to state the result we set 
\begin{align}
\label{notations1:muai1}
 \mu= \E A_{11}^{-1} A_{12} |A_{11}|^\a \quad \text{and}\quad \sigma^2 =\E (A_{12}A_{11}^{-1})^2 |A_{11}|^\a, 
\end{align}
and 
\begin{align}
\label{notations2:muai1}
 \mathcal{C}= \sigma^\a \rho_1^{-\a/2} \E|N|^\a \quad \text{with} \ N \ \text{the standard normal r.v.} 
\end{align}
\begin{theorem}
\label{main1} Assume ${\rm [A1 \mathchar`-A4]}$. 
	Suppose further that $A_{11}=A_{22}\,a.s.$ and $\sigma^2<\infty$. \\ 
If $\mu=0$ then 
\begin{align*}
                \lim _{x\to \infty}\P (\pm  W_1>x)x^{\a}(\log x)^{-\a \slash 2} &= c_2\,\mathcal{C}/2, \\
 \text{so that}\quad  \lim _{x\to \infty}\P (|W_1|>x)x^{\a}(\log x)^{-\a \slash 2} &=c_2\,\mathcal{C}. 
\end{align*}
If $\mu \neq 0$ then 
\begin{align*}
\lim _{x\to \infty}\P (\pm W_1>x)x^{\a}(\log x)^{-\a } &=
\left \{
\begin{array}{cl}
c_{2,\pm}\mu^{\a }\rho_1^{-\a}  & \mbox{if}\quad \mu>0 \\
c_{2,\mp}|\mu|^{\a }\rho_1^{-\a} & \mbox{if}\quad \mu <0, 
\end{array}
\right. \\
\text{so that} \quad 
\lim _{x\to \infty}\P (|W_1|>x)x^{\a}(\log x)^{-\a } &=c_2\rho_1^{-\a}|\mu |.
\end{align*}
\end{theorem}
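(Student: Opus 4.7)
I would decompose $W_1 = \wh W_1 + \wt W_1$. Since $\wh W_1$ satisfies the univariate Kesten--Goldie SRE \eqref{componentwiseSRE1}, Theorem~\ref{lemma:4447BDM} yields $\P(|\wh W_1|>x)=O(x^{-\a})$, which is negligible against the target rates $x^{-\a}(\log x)^{\a/2}$ and $x^{-\a}(\log x)^{\a}$. It therefore suffices to analyse $\wt W_1 = \sum_{i=1}^\infty \Pi_{0,2-i}^{(1)}A_{12,1-i}W_{2,-i}$, which I would truncate at $n=\lfloor c\log x\rfloor$ with $c=1/\rho_1$. The exponential decay of $\Pi_{0,2-i}^{(1)}$ (from $\E\log|A_{11}|<0$) combined with the $(\a+\eta)$-moment bounds of [A2] should keep the contribution of the discarded terms below $o(x^{-\a}(\log x)^{\a/2})$.

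\textbf{From the truncated sum to $M_n W_{2,-n}$.} Iterating $W_{2,-i} = \Pi_{-i,1-n}^{(2)}W_{2,-n}+\sum_{j=1-n}^{-i}\Pi_{-i,j+1}^{(2)}B_{2,j}$ and substituting into the truncated sum gives
\[
\wt W_1^{(n)} := \sum_{i=1}^n \Pi_{0,2-i}^{(1)}A_{12,1-i}W_{2,-i} \;=\; M_n\,W_{2,-n} \;+\; R_n',
\]
with $M_n$ as in \eqref{Mn} and $R_n'$ a linear combination of $B_{2,j}$, $1-n\le j\le -1$, whose random coefficients are measurable with respect to $(\bfA_t)_{t\ge 1-n}$. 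Using $\E|B_2|^{\a+\eta}<\infty$ one obtains a remainder bound $\P(|R_n'|>x) = o(x^{-\a}(\log x)^{\a/2})$. Crucially $M_n$ is independent of $W_{2,-n}\eqd W_2$, so Breiman's lemma combined with \eqref{tail:w_2} yields
\[
\P(\pm M_n W_{2,-n}>x) \;\sim\; \E(M_n^{\pm})^{\a}\,c_{2,+}\,x^{-\a} \;+\; \E(M_n^{\mp})^{\a}\,c_{2,-}\,x^{-\a},
\]
so the proof reduces to locating the asymptotics of $\E(M_n^{\pm})^{\a}$ at $n\sim\log x/\rho_1$.

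\textbf{Computing $\E(M_n^{\pm})^{\a}$ via Cram\'er tilting.} When $A_{11}=A_{22}$ a.s., the ``gap'' cancellation $\Pi_{0,2-i}^{(1)}\Pi_{-i,1-n}^{(2)} = \Pi_{0,1-n}^{(1)}/A_{11,1-i}$ produces the factorisation
\[
M_n \;=\; \Pi_{0,1-n}^{(1)}\,S_n, \qquad S_n \;=\; \sum_{i=1}^{n} A_{11,1-i}^{-1}A_{12,1-i}.
\]
I would then introduce the Cram\'er tilt $\d\wt\P/\d\P = |\Pi_{0,1-n}^{(1)}|^{\a}$ (legitimate since $\E|A_{11}|^{\a}=1$): the pairs $(A_{11,j},A_{12,j})$ remain i.i.d.\ under $\wt\P$, the summands $Z_j=A_{11,j}^{-1}A_{12,j}$ are i.i.d.\ with $\wt\E Z_1=\mu$ and $\wt\E Z_1^2=\sigma^2$, and $\E|M_n|^{\a}=\wt\E|S_n|^{\a}$. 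If $\mu=0$, the CLT under $\wt\P$ gives $S_n/\sqrt n\Rightarrow \sigma N$; uniform integrability of $(|S_n|/\sqrt n)^{\a}$, supplied by the $(\a+\eta)$-moment of $Z_1$ (a consequence of [A2]), promotes this to $\wt\E|S_n|^{\a}\sim n^{\a/2}\sigma^{\a}\E|N|^{\a}$. Asymptotic Gaussian symmetry then gives $\E(M_n^{\pm})^{\a}\sim\tfrac12\E|M_n|^{\a}$, and substituting $n\sim \log x/\rho_1$ recovers the constant $c_2\,\mathcal C/2$. If $\mu\ne 0$, the strong law gives $S_n/n\to\mu$ and (again by uniform integrability) $\wt\E|S_n|^{\a}\sim n^{\a}|\mu|^{\a}$; a conditioning argument on the limit of $\wt\P(\Pi_{0,1-n}^{(1)}>0)$---which is $1$ when $A_{11}>0$ a.s.\ and $1/2$ when $A_{11}$ takes both signs (in which case Goldie's theorem applied to $W_2$ forces $c_{2,+}=c_{2,-}=c_2/2$)---reconciles both subcases with the single formula $\E(M_n^{\pm})^{\a}c_{2,+}+\E(M_n^{\mp})^{\a}c_{2,-}\sim c_{2,\pm}|\mu|^{\a}n^{\a}$, giving the announced constant $c_{2,\pm}|\mu|^{\a}\rho_1^{-\a}$.

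\textbf{Main obstacle.} The chief technical difficulty is sharpening the routine $O(x^{-\a})$ bounds on the remainders---both the series tail beyond $n$ and $R_n'$---to $o(x^{-\a}(\log x)^{\a/2})$. This cannot be achieved purely from $(\a+\eta)$-moments of individual summands and requires quantitative exponential-decay estimates on $\Pi_{0,j}^{(1)}$ combined with uniform-in-$n$ moment control of $S_n$ under the tilt. A secondary subtle point is establishing the uniform integrability needed to upgrade the CLT (respectively SLLN) to $L^{\a}$-convergence; the non-lattice hypothesis [A4] enters indirectly, through its role in guaranteeing the univariate tail \eqref{tail:w_2} of $W_2$ used in the Breiman step.
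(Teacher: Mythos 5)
Your overall roadmap is the same as the paper's: reduce to $\wt W_1$, truncate the series near $i\approx\log x/\rho_1$, factor the main block as $M_n W_{2,-n}$ and apply Breiman, then use the Cram\'er tilt together with CLT/SLLN (with $L^\alpha$-convergence) to pin down $\E(M_n^{\pm})^\alpha\asymp n^\beta$. The $\mu=0$ vs.\ $\mu\neq 0$ dichotomy, the tilting identity $\E|M_n|^\alpha=\E_\alpha|S_n|^\alpha$, the factor $\Pi^{(1)}_{0,2-i}\Pi^{(2)}_{-i,1-n}=\Pi^{(1)}_{0,1-n}/A_{11,1-i}$, and the sign-splitting when $\P(A_{11}<0)>0$ all match the paper.

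The genuine gap is precisely where you yourself flag the ``main obstacle'', and it is not a sharpening of routine bounds but a structural defect in your truncation. You cut the series hard at $n_0=\lfloor\log x/\rho_1\rfloor$ and claim the discarded tail $\sum_{i>n_0}$ is $o(x^{-\alpha}(\log x)^{\alpha/2})$ by ``exponential decay of $\Pi^{(1)}_{0,2-i}$''. That decay does not exist near $i\approx n_0$: since $\E|A_{11}|^\alpha=1$, every term $\Pi^{(1)}_{0,2-i}A_{12,1-i}W_{2,-i}$ has $\alpha$-tail of order $x^{-\alpha}$ regardless of $i$, and the tilted large-deviation estimate only starts to pay off once $|i-n_0|\gg\sqrt{\log x}$. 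The paper resolves this by cutting at $n_1=n_0-L$ with $L=\lfloor\ov D\sqrt{(\log\log x)\log x}\rfloor$ and treating the three ranges separately: the far tail $i>n_2=n_0+L$ by a Markov/$\alpha\pm\eps$ argument (Lemma \ref{inftypart}), the buffer $n_1<i\le n_2$ by Petrov's uniform large-deviation theorem applied to $\Pi^{(1)}_{n_1,1}$ (Lemma \ref{middlepart}), and the main block $i\le n_1$ itself. Moreover the same budget $L$ is not optional for the main block: the Breiman step needs the uniform-integrability estimate $\E|M_{n_1}|^\alpha\Ind{\{|M_{n_1}|>xT^{-1}\}}=o((\log x)^\beta)$ (eq. \eqref{tailM}), which is obtained from Lemma \ref{Products} only because $\rho_1 i+\rho_2 k\le\rho_1 n_1<\rho_1 n_0$ gives the extra $(\log x)^{-\xi}$ factor; if you replace $n_1$ by $n_0$ this bound evaporates. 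So the proposal as written cannot close: you must introduce the buffer $L$, split off $\wt Z^{n_1,n_2}$, and bring in a Petrov-type estimate before the Breiman/tilting machinery can be made to work.
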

The proof is given in Section \ref{pf:main1}. Theorem \ref{main1} with $A_{11}>0$ was proved in \cite{damek:zienkiewicz:2017},  
but presently the proof has been considerably simplified and we do not assume positivity of $A_{11}$.

The extra function $(\log x)^\beta,\,\beta = \alpha,\alpha/2$ that appears in the tails of $W_1$,  comes from $\E(M_{n}^\pm)^\alpha$ if $A_{11}>0$ a.s. or from $\E|M_{n}|^\alpha$ if not
(see \eqref{tailw1:breiman}). The latter is kind of surprising because it is not visible from \eqref{tailw1:breiman} and we will come to it later. First let us explain the simpler case of $A_{11}>0$.  
 
To study $\E(M_{n}^\pm)^\alpha$, we change the measure using $A_{11}>0$, 
i.e. $\E_\a[\,\cdot\,]=\E[A_{11}^\a\,\cdot\,]$. Then
$$\E(M_{n}^\pm)^\alpha = \E _{\a}\left ((U_1+\cdots +U_n)^{\pm }\right )^{\a },$$
where $U_1,\ldots, U_n$ are iid  with generic r.v. 
$U=A_{12}A_{11}^{-1}$ (Lemmas \ref{lem4} and \ref{lem5}).
In view of \eqref{notations1:muai1}, $\E_\a U=\mu$ and $\E_\a U^2=\sigma^2$. 
Then, if $\mu=0$, the central limit theorem with convergence of moments is applied and we have 

\begin{equation*}
 \lim_{n\to\infty} (\rho_1 n)^{-\a/2} \E(M_n^\pm)^\a =\mathcal C/2,
\end{equation*}
If $\mu\neq 0$ we replace $U$ by $U-\mu $ 
and obtain 
\begin{equation*}
 \lim_{n\to \infty} (\rho_1 n)^{-\a} \E(M_n^\pm)^\a =(\mu^\pm)^\a
\end{equation*} 
 If $\P (A_{11}<0)>0$ then luckily only $\E |M_n|^{\a}$ is needed, $\E _{\a}[\cdot ] = \E [|A_{11}|^{\a}] $ and the scheme is the same. Clearly we are not able to touch $\E _{\a}(M_n^{\pm})^{\a}$ when $A_{11}$ is signed but it is not an obstacle as explained in the proof of Theorem \ref{main1term}. Condition $\rm{[A3]}$ is related to the change of measure.

We proceed to the second case. 
\begin{theorem}\label{main} Suppose $\P (A_{11}\neq A_{22})>0$. Under Assumptions ${\rm [A1 \mathchar`-A6]}$, 
	\begin{equation}
	\label{plusminus0}
	\lim _{x \to \infty}\P (\pm W_1>x)x^{\a}(\log
	x)^{-1}=\mathcal D \a \rho _1^{-1},\end{equation}
	where
	\begin{equation*}
	\mathcal D = \left \{
	\begin{array}{cl}
	c_2c_R/2 & \mbox{if}\quad \P (A_{22}<0)>0 \ \mbox{or}\ [\,\P (A_{22} >0)=1\ \&\ \P (A_{11} <0)>0\,]\\
	c_{2,\pm} c_{R,+}+c_{2,\mp}c_{R,-} & \mbox{if}\quad \P (A_{22} >0)= \P (A_{11} \geq 0)=1
	\end{array}
	\right.
	\end{equation*} 
	and
	\begin{equation}\label{cR}
	c _{R,\pm}= \lim _{n\to \8}(\a n)^{-1} \E (M_n^{\pm})^{\a }\quad \mbox{and}\quad  c_R=\lim _{n\to \8} (\a n)^{-1}\E |M_n|^{\a }>0 .
	\end{equation}	
	In particular, 
	\begin{equation}\label{both}
	\lim _{x \to \infty}\P (|W_1|>x)x^{\a}(\log x)^{-1}=\a c_2c_R \rho _1 ^{-1}>0. \end{equation}
\end{theorem}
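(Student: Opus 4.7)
The plan is to reduce the tail estimate on $W_1$ to one on $\wt W_1$ and then establish the representation \eqref{tailw1:breiman} via a truncation of its series form. Under ${\rm [A1]}$--${\rm [A4]}$, $\wh W_1$ satisfies the univariate Kesten--Goldie hypotheses ($\E|A_{11}|^{\a}=1$; $\E|B_1|^{\a+\eta}<\infty$ via [A2]; $\log|A_{11}|$ non-lattice via [A4]), so Theorem~\ref{lemma:4447BDM} gives $\P(\pm\wh W_1>x)\sim c_{1,\pm}x^{-\a}=o(x^{-\a}\log x)$ and only $\wt W_1$ contributes. For $\wt W_1$ I would truncate the series \eqref{srestasol2} at $n=\lfloor\log x/\rho_1\rfloor$, the level at which the tilted random walk $\log|\Pi^{(1)}_{0,2-i}|$ under the change of measure $|A_{11}|^{\a}\P$ first reaches $\log x$, and then unroll each $W_{2,-i}$ down to time $-n$ via $W_{2,-i}=\Pi^{(2)}_{-i,1-n}W_{2,-n}+\wt B^{(n)}_{-i}$, with $\wt B^{(n)}_{-i}$ depending only on entries at times in $[1-n,-i]$. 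Substituting produces
$$
\wt W_1^{(n)}=M_n\,W_{2,-n}+R_n,
$$
with $M_n$ as in \eqref{Mn}. Since $M_n$ uses only times in $[1-n,0]$ while $W_{2,-n}$ uses only times $\le-n$, they are independent. A polynomial-in-$n$ bound on the $(\a+\delta)$-moment of $M_n$ (via [A2] and [A6]) lets Breiman's lemma be applied uniformly over $n=O(\log x)$, and $L^{\a}$-estimates push $R_n$ and the remainder $\wt W_1-\wt W_1^{(n)}$ to $o(x^{-\a}\log x)$, yielding
$$
\P(\pm\wt W_1>x)\sim\bigl(c_{2,\pm}\E(M_n^+)^{\a}+c_{2,\mp}\E(M_n^-)^{\a}\bigr)x^{-\a}.
$$

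Next I would prove the linear asymptotic $\E(M_n^{\pm})^{\a}\sim c_{R,\pm}\,\a n$ with $c_R>0$ as defined in \eqref{cR}. Observing the recursion
$$
M_n=A_{11,0}\,M'_{n-1}+A_{12,0}\,\Pi^{(2)}_{-1,1-n},
$$
where $M'_{n-1}\eqd M_{n-1}$ is independent of $(A_{11,0},A_{12,0})$, one sees that $M_n$ evolves as a univariate SRE in the critical/boundary regime: $\E|A_{11}|^{\a}=1$ and the noise $A_{12,0}\,\Pi^{(2)}_{-1,1-n}$ has $\a$-moment equal to the $n$-independent constant $\E|A_{12}|^{\a}$. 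Adapting Goldie's implicit-renewal scheme, I would examine the increment $\Delta_n:=\E(M_n^{\pm})^{\a}-\E((A_{11}M'_{n-1})^{\pm})^{\a}$ and argue that $\Delta_n$ converges to a strictly positive finite limit, so that summation in $n$ delivers the linear rate and identifies $c_{R,\pm}$. The renewal limit exists thanks to the non-arithmeticity of $\log(|A_{11}||A_{22}|^{-1})$ in [A5]; [A6] supplies the additional integrability needed to upgrade weak convergence of the (naturally renormalized) noise distribution to $\a$-moment convergence; and $\P(A_{11}\neq A_{22})>0$ guarantees $c_R>0$ through non-degeneracy of the mixed product structure.

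Combining the two pieces with $n=\lfloor\log x/\rho_1\rfloor$ gives $\P(\pm\wt W_1>x)\sim(c_{2,\pm}c_{R,+}+c_{2,\mp}c_{R,-})\,\a\,\rho_1^{-1}\,x^{-\a}\log x$, which matches \eqref{plusminus0} in the case $\P(A_{22}>0)=\P(A_{11}\ge0)=1$ with $\mathcal D=c_{2,\pm}c_{R,+}+c_{2,\mp}c_{R,-}$. In the remaining cases the prefactor collapses to $c_2c_R/2$: when $\P(A_{22}<0)>0$, $W_2$ has symmetric tails ($c_{2,+}=c_{2,-}=c_2/2$); when $A_{22}>0$ a.s.\ but $\P(A_{11}<0)>0$, the random signs of the products $\Pi^{(1)}_{0,2-i}$ symmetrize the $\a$-moments of $M_n$ so that $c_{R,+}=c_{R,-}=c_R/2$. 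In both sub-cases the prefactor equals $c_2c_R/2$, giving $\mathcal D=c_2c_R/2$, and positivity \eqref{both} follows immediately from $c_R>0$.

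The hardest step will be this renewal asymptotic $\E|M_n|^{\a}\sim c_R\,\a\,n$ together with $c_R>0$. The linear-in-$n$ growth is the signature of the critical/boundary regime in which the multiplier $A_{11}$ and the noise contribute at the same tail exponent $\a$; standard Kesten--Goldie produces constant tail-constants and does not apply directly. Unlike Theorem~\ref{main1}, where the identity $A_{11}=A_{22}$ permits the clean factorization $M_n=\prod_j A_{11,j}\cdot\sum_i U_i$ and a direct CLT/LLN treatment of the tilted iid sum, the hypothesis $\P(A_{11}\neq A_{22})>0$ destroys this structure and one must handle the genuinely mixed product via the full strength of [A5] and [A6]. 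A secondary but still delicate technical difficulty is uniformity of the Breiman approximation for $n$ growing like $\log x$, which forces the $(\a+\delta)$-moment bound on $M_n$ to be at most polynomial in $n$.
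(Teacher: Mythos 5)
There is a genuine gap at the step where you ``push $R_n$ to $o(x^{-\a}\log x)$.'' In the paper's notation, your $R_n$ is precisely $\wt Z_{n_1,2}$, i.e.\ the part of $\wt W_1$ produced by the $B_2$ contributions when one unrolls every $W_{2,-i}$ all the way down to time $-n$. The paper's bound for this term (Corollary~\ref{smaller}) carries the hypothesis $\rho_2\le\rho_1$, and in the lead-in to Section~\ref{subsec:pf:main} the authors explicitly warn that ``splitting $\wt Z_{n_1}$ into $\wt Z_{n_1,1}$ and $\wt Z_{n_1,2}$, as before, is not sufficient. It turns out that both parts may contribute to the asymptotics.'' The reason is elementary: a generic summand $\Pi^{(1)}_{0,2-i}A_{12,1-i}\Pi^{(2)}_{-i,1-i-k}B_{2,-i-k}$ of $R_n$ produces an $x$-sized deviation when its log-drift $\rho_1 i+\rho_2 k$ is of order $\log x$. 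When $\rho_2>\rho_1$ this happens for indices $(i,k)$ with $i+k<n\approx\log x/\rho_1$, so such terms live inside your truncation window and $R_n$ is of the same order $x^{-\a}\log x$ as the main term. Your plan therefore fails to capture part of the limiting constant whenever $\rho_2>\rho_1$, which is permitted by the hypotheses $\P(A_{11}\ne A_{22})>0$ and $[{\rm A1}]$--$[{\rm A6}]$. The paper's blocking decomposition (big blocks of length $K'$, gaps of length $\lfloor K^\theta\rfloor$, with $K\sim\rho_1\rho_2^{-1}L/2$ chosen so that $\rho_1 i+\rho_2 k\le\rho_1(n_0-L/2)$ inside each block) is not a technical convenience but exactly the device that keeps the within-block residues $R_{s,2}$ negligible while letting every block's $W_{2}$-factor retain its heavy-tail weight. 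Once one blocks, the sum $\sum_s R_{s,1}$ is no longer a single Breiman product, and the one-big-jump analysis (Lemmas~\ref{two} and \ref{allsmall}) becomes essential; your proposal has no substitute for it.

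There is a second, softer issue: your route to $\E(M_n^\pm)^\a\sim c_{R,\pm}\,\a n$ by telescoping the increment $\Delta_n=\E(M_n^\pm)^\a-\E\bigl((A_{11}M'_{n-1})^\pm\bigr)^\a$ treats $M_n$ as an SRE with an $n$-dependent noise $A_{12,0}\Pi^{(2)}_{-1,1-n}$, for which Goldie's implicit renewal machinery does not directly apply (the noise law shifts with $n$, only its $\a$-moment is stable). The paper's argument is more structural and worth internalizing: tilt by $|A_{22}|^\a$ so that $\E|M_n|^\a=\E_\a|\X_n|^\a$, where $\X_n=\sum_{i\le n}V_0\cdots V_{2-i}U_{1-i}$ is the $n$-th partial sum of a genuine stationary SRE $X_t=V_tX_{t-1}+U_t$ with $V=A_{11}A_{22}^{-1}$, $U=A_{12}A_{22}^{-1}$; then Theorem~\ref{constant} identifies $\lim(\a\rho_V n)^{-1}\E_\a(\X_n^\pm)^\a$ with the Goldie tail constants of the stationary $X$, giving $c_{R,\pm}$ and, via $\P(A_{11}x+A_{12}=A_{22}x)<1$, the strict positivity $c_R>0$. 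The sign symmetrization $c_{R,+}=c_{R,-}=c_R/2$ when $A_{22}>0$ a.s.\ and $\P(A_{11}<0)>0$ is then the Goldie symmetry for the tilted SRE with $\P(V<0)>0$, rather than an assertion about random signs of $\Pi^{(1)}$ directly.

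Your handling of the boundary cases ($\wh W_1=O(x^{-\a})$ negligible, $W_2$ regularly varying) and the basic unrolling $W_{2,-i}=\Pi^{(2)}_{-i,1-n}W_{2,-n}+\wt B^{(n)}_{-i}$ is correct, and the identification $\mathcal D\a\rho_1^{-1}$ as the final prefactor is right. What needs to change is the decomposition of $\wt Z_{n_1}$: either reproduce the paper's $R_s$--$Q_s$ blocking (and then prove the near-independence and single-jump estimates), or find some other way to control $\wt Z_{n_1,2}$ that does not require $\rho_2\le\rho_1$.
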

The proof is given in Section \ref{subsec:pf:main}. 

As before, the extra function $\log x$ in \eqref{plusminus0} 
 comes from $\E(M_n^{\pm})^\a$ if $A_{22}>0$ or $\E|M_n|^\a$ if not. 
 In order to show $\E(M_n^{\pm})^\a , \E|M_n|^\a \sim c \log x$, 
we change the measure similarly as before, i.e. $\E_\a[\,\cdot\,]=\E[|A_{22}|^\a\,\cdot\,]$ or $\E[A_{22}^\a\,\cdot\,]$ if $A_{22}>0$. 
But this time we consider SRE $X_t=V_t X_{t-1} + U_t$ under the measure $\P_\a$ (Lemma \ref{lem2}), 
where generic r.v.'s for an iid sequence $(V_t,U_t)$ are given by  
$V=A_{11}A_{22}^{-1}$ and $U=A_{12}A_{22}^{-1}$ respectively. We are able to transform
 $\E(M_n^\pm)^\a$ into $\E _{\a}(\X_n^{\pm})^{\a}$ where $\X_n$ converges to 
the stationary solution $\X$ to $X_t=V_t X_{t-1} + U_t$ (see \eqref{limits} in Theorem \ref{constant}). 

\begin{remark}
It is known that by using the standard regular variation, 
we can not capture the joint regular behavior of $\bfW$ properly, 
if tail orders are different as in our case. 
Therefore, several suggestions have been made such as ``non-standard regular variation'' by Resnick \cite{resnick:2007} or 
``vector scaling regular variation'' recently by Mentemeier and Wintenberger \cite{mentemeier:wintenberger:2020}. In particular, the latter 
notion was applied to \eqref{affine} with diagonal matrices 
in \cite[Section 6]{mentemeier:wintenberger:2020} and  
 \cite{damek:2021}. 
Then, the next natural question is how to characterize the tail of $\bfW$ in the triangular case in terms of  ``non-standard'' or ``vector scaling regular variation''. 
It is left for the future.      
\end{remark}

\section{Proof of Theorem \ref{theorem:main1}}
\label{sec:pr:theorem:main1}
\noindent
We give the proof separately for the $4$ cases depending on assumptions $\alpha_1 \lessgtr \alpha_2$ and $\mathcal A$ or $\mathcal B$.
Unless specified, $C$ denotes a positive constant whose value is not of interest. 
 \\
{\bf [\,Case $\alpha_1<\alpha_2$, $\mathcal{A}(\a _1)$\,] } 
Observe that 
the stationary solution $W_{1,0}$ of SRE \eqref{uniSRE1} 
satisfies 
\[
W_{1,0} = D_0+ A_{11,0} W_{1,-1}, 
\]
where $W_{1,-1}$ has the same law as $W_{1,0}$ and independent of
$A_{11,0}$. In view of Theorem 2.3 and Lemma 9.4 of 
Goldie (1991)\nocite{goldie:1991}, we may conclude that if
$\P(A_{11}\ge 0)=1$, the conditions 
\begin{align}
\label{eq:goldie+}
\int_0^\infty \big|
\P(\pm W_{1,-1}>x)-\P(\pm A_{11,0}W_{1,-1}>x)
\big| x^{\alpha_1-1} dx <\infty
\end{align}
respectively imply 
\begin{align*}
& \lim_{x\to\infty} \P(\pm W_{1,0}>x)x^{\alpha_1} 
= \bar c_\pm,  
\end{align*}
where
\begin{align*}
c_\pm=\rho_{1}^{-1}
\int_0^\infty \big(
\P(\pm W_{1,-1}>x)-\P(\pm A_{11,0}W_{1,-1}>x)
\big) x^{\alpha_1-1} dx <\infty.
\end{align*}
Similarly if $\P(A_{11,0}<0)>0$ and both of  
\eqref{eq:goldie+} 
hold, then
\begin{align*}
& \lim_{x\to\infty} \P( \pm W_{1,0}>x)x^{\alpha_1} 
& = (2 \rho_{1})^{-1} \int_0^\infty
\big(\P(|W_{1,-1}|>x)-\P(|A_{11,0}W_{1,-1}|>x) \big)x^{\alpha_1-1}dx, 
\end{align*}
so that $\bar c_+=\bar c_-$. 
Therefore, what needs to be proved is \eqref{eq:goldie+}. 
Choose $\bar \alpha$ such that $\a _1<\bar \alpha<\alpha_2$, and then 
 $\E|W_2|^{\bar \alpha}<\infty$ so that $\E|D_0|^{\alpha_1}<\infty$. 
Then the proof is quite similar to that of Theorem 3.2 in \cite{matsui:pedersen:2019}
or Theorem 3.2 in \cite{damek:matsui:swiatkowski:2019}. 
We omit further details. 

Next we show that $\bar c_{+}+ \bar c_{-}$ in $\mathcal{A}(\a _1)$ is strictly positive. 
Notice that we cannot take the
Goldie's approach \cite[Theorem 4.1]{goldie:1991}. Since $(D_t)$ of \eqref{D}
is
a dependent sequence, the key of L\'evy inequality there does not
work.
We exploit the decomposition \eqref{decomp1}. It follows from Lemma
\ref{lemma:4447BDM} that the 
solution $\wh W_1$ of SRE \eqref{componentwiseSRE1} satisfies 
$$
\P( |\wh W_1|>x) \sim  c_1
x^{-\alpha_1}\quad  \mbox{with}\quad   c_1>0.$$
 Similarly, the solution $\wt W_1$ for SRE \eqref{componentwiseSRE2} satisfies 
 $$ 
\lim _{x\to \8}\P( |\wt W_1|>x)x^{\alpha_1}=  c' \geq 0.$$
Notice that
\begin{align*}
c_1&=( \alpha_1 \rho_{1})^{-1}\E  [|\wh W_1|^{\a _1}- |A_{11}\wh W_1|^{\a _1}], \\
c'&=( \alpha_1 \rho_{1})^{-1}\E  [|\wt W_1|^{\a _1}- |A_{11}\wt W_1|^{\a _1}].
\end{align*}
If $c_1>c'$, for a large positive $\zeta $ we write
\begin{equation*}
\P(|W_1|>x) \ge \P(|\wh W_1| >(1+\zeta)x)-\P(|\wt W_1| >\zeta x).
\end{equation*}
Hence
\begin{equation*}
\lim _{x\to \8}x^{\a _1}\P(|W_1|>x) \ge (1+\zeta)^{-\a _1}c_1- \zeta ^{-\a _1}c'>0
\end{equation*}
if $\zeta $ is sufficiently large. If $c'>c_1$, we proceed similarly. 
\hfill $\qed$ \\
\bigskip 

{\bf [\,Case $\alpha_1<\alpha_2$, $\mathcal{B}(\a _1)$\,]} 
There is $\bar \a : \a _1<\bar \a <\a _2$ such that
$\E |A_{11}|^{\bar \a}<1$. Indeed, $f(\beta)=\E |A_{11}|^{\beta}<\8 $ is well defined for $0\leq \beta \leq \a _1+\eta $ and continuous. So $f(\beta)\geq 1$ for all $\beta >\a _1$ is not possible. 
Hence by \eqref{srestasol2},
$\E |\wt W_1|^{\bar \a}<\8$ by Minkowski inequality or subadditivity and the conclusion follows from \eqref{tail:w_1}. \hfill $\qed$ 

\bigskip 
\noindent
{\bf [\,Case $\a _1>\a_2$, $\mathcal{A}(\a _2)$\,]} 
First we describe the constants $\omega _{\pm}$, $\omega$ that appear in \eqref{w}. We have the following lemma. 
\begin{lemma}\label{lemma:sreconstant}
	$\rm (a)$ Suppose that $\a _1> \a _2$, $\mathcal{A}(\a _2)$, \eqref{C2} and $\P (A_{22}=0)=0$ 
are satisfied. 
Then the limit 
\begin{align}
\label{def:limitabsw}
w=\lim_{n\to \infty} \E |M_n|^{\a_2} >0  
\end{align}
exists. \\
$\rm (b)$ If additionally $\P (A_{22}>0)=1$ then the limits
\begin{align}
\label{eq:limpm}
w_\pm = \lim_{n\to\infty} \E (M_n^
{\pm})^{\a_2} 
\end{align} 
exist.
\end{lemma}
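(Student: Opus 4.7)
The starting point is the identification
\[
M_n = \big(\bfA_0\bfA_{-1}\cdots\bfA_{1-n}\big)_{12},
\]
together with a factoring that exploits $A_{22}\neq 0$ a.s. Setting $T_n := \Pi_{-1,1-n}^{(2)} = A_{22,-1}\cdots A_{22,1-n}$ and using the identity $\Pi_{-i,1-n}^{(2)} = T_n/(A_{22,-1}\cdots A_{22,-(i-1)})$ (valid for $1\le i\le n$), one obtains
\[
M_n = T_n R_n,\qquad R_n := \sum_{i=1}^n \frac{\Pi_{0,2-i}^{(1)}\,A_{12,1-i}}{A_{22,-1}\cdots A_{22,-(i-1)}},
\]
in which the $i$-th summand depends only on the triples $(A_{11,t},A_{12,t},A_{22,t})$ for $t\in\{1-i,\ldots,0\}$, and in particular not on $n$.

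Since $\E|A_{22}|^{\alpha_2}=1$ we have $\E|T_n|^{\alpha_2}=1$, and $d\P^{(n)}_\alpha/d\P := |T_n|^{\alpha_2}$ defines a probability measure. Under $\P^{(n)}_\alpha$ the triples at times $t\in\{-1,\ldots,1-n\}$ are iid with common tilted law $\Q_\alpha$ having density $|a_{22}|^{\alpha_2}$ relative to the law of $(A_{11},A_{12},A_{22})$, and are independent of the triple at $t=0$, which retains its original law. Hence
\[
\E|M_n|^{\alpha_2} = \E^{(n)}_\alpha|R_n|^{\alpha_2}.
\]
On an auxiliary space where all triples at $t\le -1$ are iid from $\Q_\alpha$ (call this measure $\P^\star_\alpha$), the sequence $\{R_n\}_{n\ge 1}$ becomes the partial-sum sequence of a single fixed series, and the $\P^\star_\alpha$-law of $R_n$ matches the $\P^{(n)}_\alpha$-law.

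Writing $V_k := A_{11,-k}/A_{22,-k}$ and $U_k := A_{12,-k}/A_{22,-k}$ for $k\ge 1$, a short check gives
\[
R_n = A_{12,0} + A_{11,0}\,L_{n-1},\qquad L_n := \sum_{k=1}^n V_1\cdots V_{k-1}U_k,
\]
and under $\P^\star_\alpha$ the pairs $(V_k,U_k)$ are iid with
\[
\E^\star_\alpha|V|^{\alpha_2} = \E|A_{11}|^{\alpha_2},\qquad \E^\star_\alpha|U|^{\alpha_2} = \E|A_{12}|^{\alpha_2}.
\]
Since $\alpha_2<\alpha_1$, log-convexity of $s\mapsto\E|A_{11}|^s$ combined with $\E|A_{11}|^{\alpha_1}\le 1$ gives $\E|A_{11}|^{\alpha_2}<1$; also $\E|A_{12}|^{\alpha_2}<\infty$ by \eqref{C2}. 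Because $V_1\cdots V_{k-1}$ and $U_k$ use disjoint time indices they are independent, so
\[
\E^\star_\alpha|V_1\cdots V_{k-1}U_k|^{\alpha_2} = (\E|A_{11}|^{\alpha_2})^{k-1}\E|A_{12}|^{\alpha_2},
\]
which is geometrically summable in $k$. The $L^{\alpha_2}$-triangle inequality (for $\alpha_2\ge 1$) or subadditivity of $|\cdot|^{\alpha_2}$ (for $\alpha_2<1$) then shows $\{L_n\}$ is Cauchy in $L^{\alpha_2}(\P^\star_\alpha)$, so $L_n\to L_\infty$ and $R_n\to R_\infty := A_{12,0}+A_{11,0}L_\infty$ in $L^{\alpha_2}$. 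Therefore
\[
w = \lim_{n\to\infty}\E|M_n|^{\alpha_2} = \E^\star_\alpha|R_\infty|^{\alpha_2}
\]
exists and is finite. Positivity $w>0$ follows because $R_\infty=0$ a.s.\ would force $A_{12,0} = -A_{11,0}L_\infty$ a.s.\ with $L_\infty$ independent of $(A_{11,0},A_{12,0})$, which is incompatible with $\P(A_{12}\neq 0)>0$ together with the non-degeneracy of $L_\infty$ (itself an SRE-type limit in the contractive regime).

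For part (b), when $A_{22}>0$ a.s.\ we have $T_n>0$ a.s., so $(M_n)^\pm = T_n(R_n)^\pm$ and $\E(M_n^\pm)^{\alpha_2} = \E^{(n)}_\alpha(R_n^\pm)^{\alpha_2}$; continuity of $x\mapsto(x^\pm)^{\alpha_2}$ together with $L^{\alpha_2}$-convergence of $R_n$ yields $w_\pm = \E^\star_\alpha(R_\infty^\pm)^{\alpha_2}$. The main obstacle is setting up the change of measure and the extended space so that the summands of $R_n$ form a genuine infinite series with distribution independent of $n$; once this is done, everything reduces to a standard Banach-space Cauchy argument, with the key quantitative inputs being $\E|A_{11}|^{\alpha_2}<1$ (from $\alpha_2<\alpha_1$) and $\E|A_{12}|^{\alpha_2}<\infty$ (from \eqref{C2}).
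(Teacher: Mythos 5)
Your argument follows the same strategy as the paper: factor out a pure $A_{22}$ product from $M_n$, tilt the measure by its $\alpha_2$-th power, and conclude $L^{\alpha_2}$-convergence from the geometric decay $\E|A_{11}|^{\alpha_2}<1$. The only structural difference is the choice of factoring: you pull out $\Pi^{(2)}_{-1,1-n}$ (omitting $A_{22,0}$), whereas the paper pulls out $\Pi^{(2)}_{0,1-n}$, writing $M_n=\Pi^{(2)}_{0,1-n}\sum_{i=1}^n V_0\cdots V_{2-i}U_{1-i}$ with $V_j=A_{11,j}/A_{22,j}$, $U_j=A_{12,j}/A_{22,j}$. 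The paper's choice has a concrete payoff: under the tilted measure the limit $X_0=\sum_{i\ge1}V_0\cdots V_{2-i}U_{1-i}$ is a \emph{stationary solution} of the SRE $X_t=V_tX_{t-1}+U_t$, and positivity of $w$ drops out immediately by substitution, since $X_0=0$ a.s. together with $X_0\eqd X_{-1}$ and $X_{-1}\perp(V_0,U_0)$ forces $U_0=0$ a.s., i.e. $A_{12}=0$ a.s., contradicting \eqref{C2}.

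Your positivity step, by contrast, has a genuine gap. You invoke ``the non-degeneracy of $L_\infty$'', but nothing in the hypotheses of the lemma guarantees $L_\infty$ is non-constant: the lemma does not assume $\P(A_{11}x+A_{12}=A_{22}x)<1$ for all $x$, so $L_\infty$ could be a.s.\ equal to some $\ell$ (this happens exactly when $A_{12}+A_{11}\ell=A_{22}\ell$ a.s.). If $L_\infty=\ell$ is constant, the relation $A_{12,0}=-A_{11,0}L_\infty$ reads $A_{12}=-A_{11}\ell$ a.s., which is \emph{not} by itself incompatible with $\P(A_{12}\ne0)>0$; you then need a further step combining the two affine relations (which forces $A_{22}\ell=0$, hence $\ell=0$ by $A_{22}\neq0$ a.s., hence $A_{12}=0$ a.s., contradiction). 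Without that step the argument is incomplete. The cleaner way to close this, which also matches the paper, is to note $A_{22,0}^{-1}R_\infty=U_0+V_0L_\infty$ is precisely the stationary SRE solution under your tilted measure and apply the substitution argument directly, instead of reasoning about independence and degeneracy of $L_\infty$.
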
 
\begin{proof}
Notice that in view of \eqref{Mn}, we have 
	\begin{align*}
|M_n| 
=& \big |\Pi ^{(2)}_{0,1-n}\big |
\Big | 
\sum _{i=1}^n \Pi ^{(1)}_{0,2-i}(\Pi ^{(2)}_{0,2-i})^{-1}A_{12,1-i}A_{22,1-i}^{-1}\Big |\\
=&  \big |\Pi^{(2)}_{0,1-n}\big | \Big |
\sum _{i=1}^{n}V _0\cdots V _{2-i} U_{1-i} \Big |,	
\end{align*}
where
\begin{align}
\label{notation:changemeas}
V_i = A_{11,i}A_{22,i}^{-1},\quad U_i= A_{12,i}A_{22,i}^{-1}\quad
 \mbox{and for}\ i=1, \ V_0\cdots V_{2-i}=1.
\end{align}
Now we change the measure ; let $\mathcal F_n$ be the filtration
defined by the sequence $(\bfA_i,\bfB_i)$ : $\mathcal
F_n=\sigma((\bfA_i,\bfB_i)_{-n \le i\le 0})$. Then the expectation $ \E _{\a _2}$
w.r.t. the new probability measure $\P_{\a _2}$ is defined by 
\begin{equation}\label{changemeasure}
\E _{\a _2}[Z] = \E  \big[|\Pi ^{(2)}_{0,-n} |^{\a _2} Z\big], 
\end{equation}
where $Z$ is measurable w.r.t. $\mathcal F_n$. 
Then 
\begin{equation*}
\E _{\a _2}\Big (\sum _{i=1}^{\infty}|V _0\cdots V _{2-i}U_{1-i}|\Big)^{\a _2}<\infty .
\end{equation*}
Indeed, 
\[
 \E _{\a _2}|V_i| ^{\alpha_2} =\E |A_{11,i}|^{\alpha_2}<1, \quad  \E _{\a _2}|U_i| ^{\alpha_2} =\E |A_{12,i}|^{\alpha_2}<\infty  
\]
so that by subadditivity and Minkowski inequality, we have 
\begin{align*}
& \E _{\a _2}\big(
\sum_{i=1}^\8 |V_0\cdots V_{2-i} U_{1-i}|
\big)^{\alpha_2} \\
& \le \left \{
\begin{array}{ll}
\sum_{i=1}^\8 (\E _{\a _2}| V|^{\alpha_2})^{i-1} \E _{\a _2}|U|^{\alpha_2} &
\text{for}\quad \alpha_2\leq 1 \vspace{2mm}\\
\big(\sum_{i=1}^\infty 
( \E _{\a _2}| V|^{\alpha_2})^{(i-1)/\alpha_2}\big)^{\alpha_2}
 \E _{\a _2} |U|^{\alpha_2} & \text{for}\quad \alpha_2> 1 
\end{array}
\right. \\
& < \infty. 
\end{align*}
This proves, in particular, that the series 
$$
\sum_{i=1}^\8 |V_0\cdots V_{2-i} U_{1-i}|$$
converges a.s. and so does
$$
X_0=\sum_{i=1}^\8 V_0\cdots V_{2-i} U_{1-i}.$$ 
Therefore,
by the Lebesgue dominated convergence theorem
$$
\lim _{n\to \8}\E |M_n|^{\a_2}=\lim _{n\to \8} \E _{\a _2} \big |
\sum_{i=1}^n V_0\cdots V_{2-i} U_{1-i}\big |^{\alpha_2}= \E _{\a _2} \big |
\sum_{i=1}^{\8} V_0\cdots V_{2-i} U_{1-i}\big |^{\alpha_2}=:w.$$
For $w_\pm$ in the case 
$\P (A_{22}>0)=1$, we write
$$
M_n^{\pm}=\Pi ^{(2)}_{0,1-n}
\Big (
\sum _{i=1}^n \Pi ^{(1)}_{0,2-i}(\Pi ^{(2)}_{0,2-i})^{-1}A_{12,1-i}A_{22,1-i}^{-1}\Big )^{\pm}
$$
and proceed as before.

Finally, we check that 
$$
w= \E _{\a _2}|X_0|^{\a _2}\neq 0, $$
i.e. that $X_0\neq 0$ a.s. Notice that 
$$
X_t=\sum _{i=1}^{\8}V_t\cdots V_{t+2-i}U_{t+1-i}$$ 
is a stationary solution to the SRE: 
\begin{equation}\label{newsre}
X_t=V_tX_{t-1}+U_t\quad \mbox{under}\quad \P_{\a _2}
\end{equation}
and for every $t$, $X_t$ has the same law as $X_0$. (Here, as before, $V_t\cdots V_{t+2-i}=1$ for i=1.)
Suppose that \eqref{newsre} has a unique solution. Then $X_0=0$ implies $U=0$ a.s., which gives a contradiction. 
Indeed, uniqueness of the solution is guaranteed by two conditions:
$$
-\8 \le  \E _{\a _2}\log |V|<0\quad \mbox{and}\quad \E _{\a _2}\log ^+ |U|<\8,$$
see, 
e.g. Theorem 2.1.3 in \cite{buraczewski:damek:mikosch:2016}.
We have 
$$
 \E _{\a _2}\log ^+|U| \leq \a _2^{-1} \E _{\a _2}|U|^{\a _2}= \a _2^{-1} \E |A_{12}|^{\a _2}<\8 $$
and similarly $\E _{\a _2}\log ^+|V| \leq \a _2^{-1} \E |A_{11}|^{\a _2}<\8$.
To prove that $ \E _{\a _2}\log |V|<0$, let us define
\begin{align}
\label{convexityQ}
f(\beta )= \E _{\alpha_2}|V|^{\beta }, \quad \beta \in [0,\a _2].
\end{align}
Then $f', f''$ exist in $(0,\a _2)$ such that $f''\ge 0$. Since $f(0)=1$ and $f(\a _2)= \E |A_{11}|^{\a _2}<1$,  
either $\E _{\a _2}\log |V|$ is equal $-\8 $ or if it is finite then $f'(0)=\E _{\a _2}\log |V|<0$. 
\end{proof}

Let us return to the proof of {\bf [\,Case $\a _1>\a_2$, $\mathcal{A}(\a _2)$\,]}.  
Observe that $\wh W_1$ is regularly varying with
 index $\alpha_1$, i.e. 
\[
\lim _{x\to \8} \P(|\wh W_{1,0}| >x) x^{\alpha_1} \quad \mbox{or}\quad 
\lim _{x\to \8} \P(|\wh W_{1,0}| >x) x^{\alpha_1} \ell_1(x)^{-1}\quad \mbox{exists}.
\]
We are going to show that the tail of $\wt W_1$ is dominant, i.e under $\mathcal{A}(\a _2)$
\[
\lim _{x\to \8}\P(\pm \wt W_{1,0} >x)x^{\alpha_2} =\tilde c_{\pm}.
\]
 We
decompose $\wt W_1$ into three parts,
\begin{equation}
 \wt W_{1,0} = \Big( \underbrace{\sum_{i=1}^s}_{\wt Z_s} +\underbrace{\sum_{i=s+1}^\infty}_{\wt Z^s} \Big)\, \Pi_{0,2-i}^{(1)}
 A_{12,1-i}W_{2,-i} =: \underbrace{\wt Z_{s,1} + \wt Z_{s,2}}_{\wt Z_s} + \wt
 Z^s, \label{decomp:wtX3}
\end{equation}
where in the decomposition of $\wt Z_s$, we apply the iteration \eqref{uniSRE2} of $W_2$ until
 time $-s<-i$,
\begin{align}
\label{def:decomp:w2}
 W_{2,-i} = \Pi_{-i,1-s}^{(2)} W_{2,-s}+ {\sum_{k=0}^{s-i-1}
 \Pi^{(2)}_{-i,1-i-k} B_{2,-i-k}},
\end{align}
and substitute this into $\wt Z_{s}$, so that
\begin{align}
\label{decomp:wtXunders}
 \wt Z_s
 &= \underbrace{\sum_{i=1}^s \Pi_{0,2-i}^{(1)}A_{12,1-i}
 \Pi_{-i,1-s}^{(2)}
 W_{2,-s}}_{\wt Z_{s,1}} +
 \underbrace{\sum_{i=1}^s \Pi_{0,2-i}^{(1)} A_{12,1-i}
 \sum_{k=0}^{s-i-1}\Pi_{-i,1-i-k}^{(2)}B_{2,-i-k}}_{\wt Z_{s,2}}.
\end{align}

By comparing their tail behavior we specify the dominant term and the negligible ones.
The idea is then to study the tail behavior of each term in \eqref{decomp:wtX3}. First we show the general scheme of the proof. The detailed tail asymptotics of the dominant and negligible terms will be given later. Specifically, we are going to  show that there are constants $C>0,\,0<q<1$ such that for every $s$
\begin{equation}
\label{geometric}
\P (|\wt Z^s|>x)\leq C q^sx^{-\a_2}.\end{equation}
Moreover, for a fixed (but arbitrary) $s$,
\begin{equation}\label{second}
\lim _{x\to \8}\P (|\wt Z_{s,2}|>x)x^{\a _2}=0
\end{equation}
\noindent and
\begin{align}\label{first}
&\lim _{x\to \8}\P (\wt Z_{s,1}>x)x^{\a _2}=c_{2,+}w_{s,+}+c_{2,-}w_{s,-}:=c_{s,+},\\
&\lim _{x\to \8}\P (\wt Z_{s,1}<-x)x^{\a _2}=c_{2,-}w_{s,+}+c_{2,+}w_{s,-}:=c_{s,-}, 
\end{align}
where $c_{2,\pm}$ are those in \eqref{tail:w_2} and $w_{s,\pm}$ are those in \eqref{w}.
Moreover, we will prove that
\begin{align}
\label{bounded}
 \lim _{s\to \8}c_{s,+}=&\tilde c_+\quad \mbox{and} \quad \lim _{s\to \8}c_{s,-}=\tilde c_- \quad \mbox{exist}. 
\end{align}
Hence $\wt Z_{s,1}$ is the dominating term in \eqref{decomp:wtX3}. Now,
 using \eqref{decomp1} and \eqref{decomp:wtX3}, we have that
\begin{align}
\begin{split}
 \P(W_1>x) &\le \P(\wt Z_{s,1}>(1-3\varepsilon)x)+
\P(\wh W_1>\varepsilon x) +
\P(\wt
 Z_{s,2}>\varepsilon x) +\P(\wt Z^s >\varepsilon x), \\
 \P(W_1>x) &\ge \P(\wt Z_{s,1}>(1+3\varepsilon)x)-
\P(\wh W_1<-\varepsilon x) -
\P(\wt
 Z_{s,2}<-\varepsilon x) -\P(\wt Z^s <-\varepsilon x). 
\end{split}
\end{align}
Then after multiplying by $x^{\alpha_2}$ both sides, we
 take the limit when $x\to\infty$ and obtain 
\begin{align}
\label{ineq:main}
 (1+3\varepsilon)^{-\alpha_2} c_{s,+} - C \varepsilon^{-\alpha_2} q^s &\le \liminf_{x\to\infty}
 x^{\alpha_2} \P(W_1>x) \\
 &\le \limsup_{x\to\infty} x^{\alpha_2}\P(W_1>x) \nonumber \\
 &\le    (1-3\varepsilon)^{-\alpha_2} c_{s,+} + C\varepsilon^{-\alpha_2}  q^s. \nonumber
\end{align}
Finally, letting first $s\to \8$ and then $\eps \to 0$, we obtain 
\[
 \lim_{x\to \infty} x^{\alpha_2} \P(W_1>x)=\wt c_{+}. 
\]
By changing the sign in \eqref{decomp:wtX3} 
and inequalities \eqref{ineq:main}, namely considering $-W_1$, similarly we get 
\[
 \lim_{x\to\infty} x^{\alpha_2} \P(W_1<-x)=\tilde c_-. 
\]
Here $\wt c_\pm$ are those in \eqref{bounded} of which 
formulae will be given at the end of the proof.
It remains to prove \eqref{geometric}--\eqref{bounded}. The proof of
 \eqref{geometric} and \eqref{second} is the same as that in \cite{damek:matsui:swiatkowski:2019} and it is omitted.
 For \eqref{first} we observe that
\begin{align}
\label{def:decomp:Zn}
 \wt Z_{s,1} = M_s W_{2,-s},
\end{align}
where $M_s:= \sum_{i=1}^s \Pi_{0,2-i}^{(1)}A_{12,1-i} \Pi_{-i,1-s}^{(2)}$
 and $W_{2,-s}$ are independent. Then 
\begin{align*}
\P (\wt Z_{s,1}>x) 
=&\P (M_s W_{2,-s}>x, M_s >0, W_{2,-s}>0)+ \P (M_s W_{2,-s}>x, M_s <0, W_{2,-s}<0)
\end{align*} 
and by Breiman's lemma
\begin{align*}
\lim _{x\to \8}\P \big( M_s W_{2,-s}>x, M_s \gtrless 0, W_{2,-s} \gtrless 0 \big) x^{\a _2}=w_{s,\pm}c_{2,\pm} 
\end{align*} 
which implies \eqref{first}. Now we want to take the limit when $s\to \8$. If $\P (A_{22}>0)=1$ then
by Lemma \ref{lemma:sreconstant}
$$
\lim _{s\to \8}c_{s,+}=w_+c_{2,+}+w_-c_{2,-}=:\wt c_+.$$
In a similar way we obtain  
$$
\lim _{x\to \8}x^{\alpha_2} \P (\wt W_1 <-x)= w_+c_{2,-}+w_-c_{2,+}=:\wt c_-.$$
Notice that
$$
\wt c_+ + \wt c_-=(c_{2,+}+c_{2,-})w>0.$$
If $\P (A_{22}<0)>0$ then
$$
c_{2,+}=c_{2,-}>0$$
and so
$$
\lim _{s\to \8}c_{s,-}=\lim _{s\to \8}c_{s,+}=\lim _{s\to \8}c_{2,+}w_s=c_{2,+}w >0. $$
\hfill $\qed$ 
\bigskip 

\noindent 
{\bf [\,Case $\a _1>\a_2$, $\mathcal{B}(\a _2)$\,]}
 For $\wt W_{1,0}$ we work on the expression 
\begin{align}
  \wt W_{1,0} 
 &= \sum_{i=1}^\infty \Pi_{0,2-i}^{(1)} A_{12,1-i} W_{2,-i} \nonumber \\
 &= \sum_{\ell=1}^\infty \sum_{i=1}^\ell \Pi_{0,2-i}^{(1)} 
 A_{12,1-i} \Pi_{-i,1-\ell}^{(2)} B_{2,-\ell} \nonumber \\
 &= \sum_{\ell=0}^\infty M_{\ell+1} B_{2,-\ell-1},            \label{wtW_1:seriesform}  
\end{align}
where $M_\ell$ is given in \eqref{Mn}. Here 
Fubini theorem is applicable in \eqref{wtW_1:seriesform} because for $0<\bar a<\a _2$, $\E |A_{ii}|^{\bar{\a}}<1$,
$\E |A_{12}|^{\bar{\a}}<\8$, $\E |B_2|^{\bar{\a}}<\infty$  and so $\E |\wt W_{1,0}|^{\bar \a}<\8$
due to sub-additivity for $\bar \a \le 1$ or Minkowski
inequality for $\bar \a > 1$.  

To study the tail of $\wt W_1$ we use 
	\eqref{wtW_1:seriesform}.  
Borrowing the framework of Sec. 2.2 of
 \cite{hult:samorodnitsky:2008}, where we write
\[
\mathcal F_j=\sigma
 ((\bfA_0,\bfB_0),\ldots,(\bfA_{-j},\bfB_{-j}))\ \text{and}\ \Big[ Z_j=B_{2,-j-1},\,
 \wt A_j=M_{j+1}\ \text{for}\, X=\sum_{j=0} \wt A_j Z_j \Big], 
\]
 where $\wt A_j$ is $A_j$ of \cite[Eq.\,(1.1)]{hult:samorodnitsky:2008}. It is not difficult to observe that $\wt A_j\in \mathcal F_j,\,Z_j \in \mathcal F_{j+1}$ and $\mathcal F_j$ is independent of $\sigma (Z_j,Z_{j+1},\ldots)$ for
 $j\ge 0$. Let $q=\min (\E |A_{11}|^{\a _2}, \E |A_{22}|^{\a _2})<1$. Then 
$\E|M_{\ell}|^{\a_2}=\E|\wt A_{\ell-1}|^{\alpha_2}\leq C\ell q^{\ell-1}$ and so 
$\E|\sum_{\ell=0}^\infty
 M_{\ell+1}|^{\a _2}=\E|\sum_{\ell=0}^\infty \wt A_{\ell}|^{\a _2} <\infty$. Therefore, non-zero mean condition (3.11) in
 \cite{hult:samorodnitsky:2008} is satisfied and applying Theorem 3.1
 of \cite{hult:samorodnitsky:2008} together with Remark 3.2, we obtain 
\begin{align*}
 \P(\pm \wt W_{1,0}>x) \sim \sum_{\ell=1}^\infty \big\{
\E (M_{\ell}^{\pm})^{\a _2} p_{\a _2}+\E(M_{\ell}^{\mp})^{\a _2} q_{\a _2}
\big\} x^{-\a _2}\ell_2(x). 
\end{align*}
\hfill $\qed$

\section{Proof of Theorems \ref{main1} and \ref{main}}\label{proofs}
Throughout this section, unless specified, $C,C',C_1,C_2, C_3$ denote positive constants whose values are not of interest. 
Since $\P(|\wh W_1|>x)\sim c x^{-\alpha}$, 
it suffices to prove \eqref{plusminus0} and \eqref{both} for $\wt W_1$. We further decompose $\wt W_1$ into partial
sums and study each of them. For that, given $x$, we define $n_0=n_0(x), n_1=n_1(x), n_2=n_2(x)$ and $L=L(x)$ as follows.
\begin{itembox}[l]{Indices}
\begin{equation}\label{def:notationspf}
n_0= \lfloor \rho_1^{-1}\log x \rfloor,\quad n_1=n_0 -L,\quad n_2=n_0+L,\quad L= \lfloor \ov D \sqrt{(\log \log x)\log x} \rfloor,
\end{equation}
where $\ov D$ is a sufficiently large constant.  
\end{itembox}
Indeed,
\begin{equation}\label{ovD}
\ov D^2>8C_0\max (4(\a +1)\rho _1^{-3}, \rho _1^{-1})
\end{equation}
will follow, where $C_0$ depends only on the laws of $A_{11}, A_{22}$ and it is fixed once for all in \eqref{exp}.
We write $\wt W_1$ as  
\begin{equation}\label{split}
\wt W_{1,0} = \Big( \underbrace{\sum_{i=1}^{n_1}}_{\wt Z_{n_1}}
+\underbrace{ \sum_{i=n_1+1}^{n_2}}_{\wt Z^{n_1,n_2}} + \underbrace{ \sum_{i=n_2+1}^\infty}_{\wt Z^{n_2}} \Big)\, \Pi_{0,2-i}^{(1)}
A_{12,1-i}W_{2,-i} =: \wt Z_{n_1} +  \wt
Z^{n_1,n_2} + \wt Z^{n_2}, 
\end{equation}
where 
$\wt Z_{n_1}$ is the main part and it will be proved that
\begin{equation}\label{mainbeta}
\P (\pm \wt Z_{n_1}>x)\sim x^{-\a}(\log x)^{\beta}.
\end{equation}
Here  
$\beta $ is determined by the behavior of 
$$\E (M_{n_1}^{\pm })^{\a}\sim n_1^{\beta }\sim (\log x)^{\beta},$$
which is proved separately for 
$\beta =\a , \a \slash 2$ in Theorem \ref{main1} and $\beta =1$ in Theorem \ref{main}.
For both theorems we start the proof by describing asymptotics of 
\begin{equation}\label{Mnn}
\E (M_n^{\pm})^{\a }\sim n^{\beta},\quad \mbox{as}\ n\to \8, 
\end{equation}
which is done in Lemmas \ref{lem4}, \ref{lem5} and \ref{lem2}.
The other terms in \eqref{split} are negligible.  
They are carefully handled in Section \ref{sec:neg}. 
In Lemma \ref{inftypart}, we prove that
\begin{equation*}
\P (|\wt Z^{n_2}|>x)=o(x^{-\a})\quad \mbox{as}\ x\to \8.
\end{equation*}
Once \eqref{Mnn} is obtained, Lemma \ref{middlepart} implies that
\begin{equation*}
\P (|\wt Z^{n_1, n_2}|>x)=o(x^{-\a}(\log )^{\beta })=o(\P (|\wt Z _{n_1}|>x))\quad \mbox{as}\ x\to \8.
\end{equation*}
First we complete the proof of  
Theorem \ref{main1} where the analysis of $\wt Z_{n_1}$ is considerably simpler than that for 
Theorem \ref{main}
(Section \ref{pf:main1}).  
In Section \ref{subsec:pf:main} we do the same for Theorem \ref{main}. 
Then to analyze the tail of the main part $\wt Z_{n_1}$ we exploit several auxiliary results in Section \ref{mainpart2}. 

\subsection{Proof of Theorem \ref{main1}-the main part}
\label{pf:main1}
We further divide the main part $\wt Z_{n_1}$ in \eqref{split} into two parts,  
by using the previous decomposition \eqref{def:decomp:w2} of $W_2$, 
\begin{align}
\wt Z_{n_1} 
&=\underbrace{\sum _{i=1}^{n_1}\Pi ^{(1)}_{0, 2-i}A_{12,1-i} \Pi ^{(2)}_{-i, 1-n_1}W_{2,-n_1}}_{\wt Z_{n_1,1}}+
\underbrace{\sum_{i=1}^{n_1} \Pi^{(1)}_{0,2-i} A_{12,1-i}\sum_{k=0}^{n_1-i-1} \Pi^{(2)}_{-i,1-i-k}
	B_{2,-i-k}}_{\wt Z_{n_1,2}}  \\
 &=M_{n_1}W_{2,-n_{1}}+\wt Z_{n_1,2} 
\label{def:zn12} 
\end{align}
and first our attention is focused on $\wt Z_{n_1,1}$. 
We have the following asymptotics (recall \eqref{notations1:muai1} and \eqref{notations2:muai1} for notation). 
\begin{theorem}\label{main1term} 
	Assume that $A_{11}=A_{22}\neq 0$ a.s. and that $[{\rm A1}], [{\rm A2}]$ and $\sigma^2<\infty$ hold.
	If $\mu=0$ then 
	\begin{align}
	\label{clt+}
	 \lim _{x\to \infty}\P (\pm \wt Z_{n_1,1}>x)x^{\a}(\log x)^{-\a \slash 2}&=c_2 \mathcal{C}\slash 2,  \\
	 \text{so that} \quad \lim _{x\to \infty}\P (|\wt Z_{n_1,1}|>x)x^{\a}(\log x)^{-\a \slash 2}&=c_2
	\mathcal{C}. \nonumber
	\end{align}
	If $\mu\neq 0$ then 
	\begin{align}
	\label{s+}
	 \lim _{x\to \infty}\P (\pm \wt Z_{n_1,1}>x)x^{\a}(\log x)^{-\a } &=
	\left \{
	\begin{array}{cl}
	c_{2,\pm}\mu^{\a }\rho_1^{-\a}  & \mbox{if}\quad \mu>0 \\
	c_{2,\mp}|\mu|^{\a }\rho_1^{-\a} & \mbox{if}\quad \mu <0, 
	\end{array}
	\right.
	\\
	 \text{so that} \quad \lim _{x\to \infty}\P (| \wt Z_{n_1,1}|>x)x^{\a}(\log x)^{-\a } &=c_2\rho_1^{-\a}|\mu|^{\a}.  \nonumber
	\end{align}
\end{theorem}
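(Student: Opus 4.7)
The plan is to exploit the decomposition $\wt Z_{n_1,1}=M_{n_1}W_{2,-n_1}$ in \eqref{def:zn12}, in which $M_{n_1}$ is $\mathcal F_{n_1}$-measurable while $W_{2,-n_1}$ is independent of $\mathcal F_{n_1}$. The argument proceeds in three stages: first factor $M_{n_1}$ using $A_{11}=A_{22}$, then determine the precise asymptotics of $\E(M_{n_1}^\pm)^\alpha$ or $\E|M_{n_1}|^\alpha$ via a change of measure combined with CLT/LLN, and finally transfer these moment asymptotics to the tail of $\wt Z_{n_1,1}$ by a Breiman-type argument.

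For the first stage, since $A_{11,j}=A_{22,j}$ for all $j$ and (by [A3]) these are a.s.\ nonzero, telescoping in \eqref{Mn} yields
\[
M_{n_1}=\Pi^{(1)}_{0,1-n_1}\,S_{n_1},\qquad S_n:=\sum_{j=1}^{n}U_{1-j},\qquad U_j:=A_{12,j}/A_{11,j},
\]
with i.i.d.\ $(U_j)$. Since $\E|A_{11}|^\alpha=1$, the density $|\Pi^{(1)}_{0,1-n}|^\alpha$ defines a tilted probability $\P_\alpha$ on $\mathcal F_n$ exactly as in \eqref{changemeasure}. Writing $M_n=|\Pi^{(1)}_{0,1-n}|\,\varepsilon_n\,S_n$ with $\varepsilon_n=\mathrm{sgn}(\Pi^{(1)}_{0,1-n})$, one immediately obtains
\[
\E|M_n|^\alpha=\E_\alpha|S_n|^\alpha,\qquad \E(M_n^\pm)^\alpha=\E_\alpha(\varepsilon_n S_n)^{\pm\alpha}.
\]
Under $\P_\alpha$ the $U_j$ remain i.i.d.\ with $\E_\alpha U=\mu$ and $\E_\alpha U^2=\sigma^2$ by \eqref{notations1:muai1}.

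Stage two splits by the value of $\mu$. If $\mu=0$, the CLT gives $S_n/\sqrt{\sigma^2 n}\Rightarrow N$ under $\P_\alpha$, and an $\alpha{+}\delta$-moment bound on $U$ derived from [A2] supplies enough uniform integrability to upgrade this to $\E_\alpha|S_n|^\alpha\sim(\sigma^2 n)^{\alpha/2}\E|N|^\alpha$. If $\mu\neq 0$, the SLLN together with the same UI argument produces $\E_\alpha|S_n|^\alpha\sim n^\alpha|\mu|^\alpha$, and similarly $\E_\alpha(S_n^\pm)^\alpha\sim n^\alpha(\mu^\pm)^\alpha$. When $A_{11}>0$ a.s.\ one has $\varepsilon_n\equiv 1$ and these pass directly into $\E(M_n^\pm)^\alpha$; in the $\mu=0$ subcase, symmetry of $N$ then forces $\E(M_n^\pm)^\alpha\sim\tfrac12\E|M_n|^\alpha$. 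When $\P(A_{11}<0)>0$, Goldie's symmetry theorem yields $c_{2,+}=c_{2,-}=c_2/2$ for the stationary $W_2$, so only $\E|M_n|^\alpha$ enters the final formula and the sign factor $\varepsilon_n$ never needs to be resolved.

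Stage three uses $M_{n_1}\perp W_{2,-n_1}$ and the regular variation of $W_2$ with index $\alpha$, yielding the Breiman-type identity
\[
\P(\pm\wt Z_{n_1,1}>x)\sim c_{2,\pm}\E(M_{n_1}^+)^\alpha\,x^{-\alpha}+c_{2,\mp}\E(M_{n_1}^-)^\alpha\,x^{-\alpha}.
\]
Inserting the moment asymptotics from stage two with $n_1\sim\rho_1^{-1}\log x$, and the identity $(\sigma^2/\rho_1)^{\alpha/2}\E|N|^\alpha=\mathcal C$, then reproduces the constants in the statement. The main obstacle is making this Breiman step rigorous: the classical version requires $\E|M|^{\alpha+\delta}<\infty$, whereas $\E|M_{n_1}|^{\alpha+\delta}$ grows polynomially in $n_1=n_1(x)\to\infty$. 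I would truncate $M_{n_1}$ at a level $x^\theta$ for small $\theta>0$, use Potter's inequality on $\P(W_2>y)$, and control the residual via the explicit polynomial-in-$n_1$ bound on $\E|M_{n_1}|^{\alpha+\delta}$, following the truncation scheme already employed for the case $\a_1>\a_2$ in Section \ref{sec:pr:theorem:main1}. A secondary technical point is the promotion of the CLT/SLLN for $S_n$ to convergence of $\alpha$-moments in stage two, which again rests on the $\alpha{+}\delta$ moment of $U$ afforded by [A2].
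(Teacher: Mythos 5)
Your plan for stages one and two matches the paper's approach exactly: factor $M_{n_1}=\Pi^{(1)}_{0,1-n_1}S_{n_1}$ using $A_{11}=A_{22}$, change measure via $\E_\alpha[\cdot]=\E[|\Pi^{(1)}_{0,1-n}|^\alpha\,\cdot\,]$, and get moment asymptotics of $S_n$ from the CLT or SLLN with convergence of $\alpha$-moments (the paper's Lemmas \ref{lem4} and \ref{lem5}). The trichotomy by the sign of $A_{11}$ and the use of $c_{2,+}=c_{2,-}=c_2/2$ when $\P(A_{11}<0)>0$ is also what the paper does. The gap is in stage three.

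Your Breiman step proposes to truncate $M_{n_1}$ at $x^\theta$ for small $\theta>0$ and to control the residual via an ``explicit polynomial-in-$n_1$ bound on $\E|M_{n_1}|^{\alpha+\delta}$.'' No such bound exists. Since the moment generating function $\beta\mapsto\E|A_{11}|^\beta$ is strictly convex with value $1$ at $\beta=\alpha$ and positive slope $\rho_1$ there, one has $\E|A_{11}|^{\alpha+\delta}>1$; hence $\E|M_n|^{\alpha+\delta}$ grows \emph{geometrically} in $n$ (already the single summand $i=1$ contributes $(\E|A_{11}|^{\alpha+\delta})^{\,n-1}\E|A_{12}|^{\alpha+\delta}$). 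With $n_1\sim\rho_1^{-1}\log x$ this is polynomial in $x$, not in $n_1$, and the Markov bound $x^{-\theta\delta}\E|M_{n_1}|^{\alpha+\delta}$ does not tend to zero. Worse, the truncation level $x^\theta$ itself is wrongly calibrated: under $\P_\alpha$, $\log|\Pi^{(1)}_{0,1-n_1}|$ has drift $\rho_1 n_1\sim\log x-\rho_1 L$, so $|M_{n_1}|$ is typically of order $x\,e^{-\rho_1 L}$, which exceeds $x^\theta$ for every fixed $\theta<1$; the event $\{|M_{n_1}|>x^\theta\}$ is therefore the \emph{typical} event and truncating there discards essentially all the mass. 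This is exactly why the paper defines $n_1=n_0-L$ with $L\sim \ov D\sqrt{(\log\log x)\log x}$ and truncates at $xT^{-1}$ with $T$ fixed: the buffer $L$ creates a multiplicative gap of size $e^{\rho_1 L}$ between the typical magnitude of $M_{n_1}$ and the threshold. The paper then establishes the residual estimate (equation \eqref{tail0}) not by Potter-plus-Markov but by the large-deviation Lemma \ref{Products}, which optimizes a Chernoff bound over an $x$-dependent exponent $\eps_x\asymp(\log x)^{-1/2}$ to produce the decay $(\log x)^{-\xi}$; the constant $\xi$ is then made as large as needed by choosing $\ov D$ in \eqref{ovD} large. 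Without this large-deviation input the Breiman step cannot be closed.

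Also note the analogy you invoke with the $\alpha_1>\alpha_2$ case does not carry over: there the inner sum $\wt Z_{s,1}=M_sW_{2,-s}$ has a fixed cutoff $s$, the limit $s\to\infty$ is taken only after $x\to\infty$, and the relevant moments $\E|M_s|^{\alpha_2}$ converge (Lemma \ref{lemma:sreconstant}) because the tilted recursion is contractive. Here $n_1$ is tied to $x$ and the tilted partial sums diverge, which is what forces the more delicate machinery.
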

Moreover, with our choice of $\ov D$ by Corollary \ref{smaller}, we have 
\begin{equation*}
\P (|\wt Z_{n_1,2}|>x)=o(x^{-\a}),
\end{equation*}
and so, Theorem \ref{main1} follows. Once $Z_{n_1}$ is decomposed as in \eqref{def:zn12}, in view of \eqref{Mn} we need to handle the terms 
\begin{equation*}
\Pi ^{(1)}_{0, 2-i}A_{12,1-i} \Pi ^{(2)}_{-i, 1-n_1}
\end{equation*}
and then the estimates for them we obtain in Lemma \ref{Products} are essential. 

The remaining part of the section is devoted to the proof of Theorem \ref{main1term} and Lemmas \ref{lem4},  \ref{lem5}. 
Heuristically we will observe that 
\begin{equation*}
\P (\pm Z_{n_1,1}>x)\sim \E (M_{n_1}^{\pm})^{\a}\P (W_{2,-n_1}>x)+\E (M_{n_1}^{\mp})^{\a}\P (-W_{2,-n_1}>x).
\end{equation*}
Apparently, if $\P (A_{11}<0)>0$ only the behavior of $\E |M_{n_1}|^{\a}$ is needed which explains the content of the next lemma. 
\begin{lemma}\label{lem4}
	Assume that $A_{11}=A_{22}\neq 0$ a.s. and $[{\rm A1 \mathchar`- A3}]$. Moreover, $\mu=0$ and $\sigma^2<\infty$. 
	Then
	\begin{equation}\label{zero}
	\lim _{x\to \infty}(\log x)^{-\a \slash 2}\E |M_{n_1}|^{\a }=\mathcal{C}. 
	\end{equation}
	If additionally $\P (A_{11}>0)=1$, then 
	\begin{equation}\label{zero+}
	\lim _{x\to \infty}(\log x)^{-\a \slash 2}\E (M_{n_1}^{\pm})^{\a }=\mathcal{C} \slash 2
	.\end{equation} 
\end{lemma}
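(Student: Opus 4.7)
The plan is to exploit the identity $A_{11}=A_{22}$ to factor $M_n$ into a telescoping multiplicative factor times an i.i.d.\ random walk, then change measure so that the walk is centred with variance $\sigma^2$, and finally apply the central limit theorem with convergence of $\a$-th moments. First, since $A_{11,j}=A_{22,j}\ne 0$ a.s.\ by [A3], multiplying and dividing the $i$-th summand of \eqref{Mn} by $A_{11,1-i}$ collapses the matrix product to
\[
\Pi_{0,2-i}^{(1)}A_{12,1-i}\Pi_{-i,1-n}^{(2)}=\Pi_{0,1-n}^{(1)}\,U_{1-i},\qquad U_j:=A_{12,j}/A_{11,j},
\]
so that $M_n=\Pi_{0,1-n}^{(1)}S_n$, where $S_n=\sum_{i=1}^n U_{1-i}$ is a sum of $n$ i.i.d.\ copies of $U:=A_{12}/A_{11}$.

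Next, define the tilted probability $\P_\a$ on $\F_n=\sigma((\bfA_j,\bfB_j)_{1-n\le j\le 0})$ by the density $\d\P_\a/\d\P=|\Pi_{0,1-n}^{(1)}|^\a$. By [A1] and the independence of the $(\bfA_j,\bfB_j)$ across $j$, $\E|\Pi_{0,1-n}^{(1)}|^\a=(\E|A_{11}|^\a)^n=1$, so $\P_\a$ is a probability measure and
\[
\E|M_n|^\a=\E\bigl[|\Pi_{0,1-n}^{(1)}|^\a|S_n|^\a\bigr]=\E_\a|S_n|^\a.
\]
Because the tilt factorises over the independent coordinates, under $\P_\a$ the pairs $(A_{11,j},A_{12,j})$ remain i.i.d., hence so do the $U_{1-i}$, with
\[
\E_\a U=\E[A_{11}^{-1}A_{12}|A_{11}|^\a]=\mu=0,\qquad \E_\a U^2=\E[(A_{12}A_{11}^{-1})^2|A_{11}|^\a]=\sigma^2<\infty.
\]

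Now the classical CLT under $\P_\a$ gives $S_n/\sqrt n\Rightarrow \sigma N$ with $N$ standard normal. I would upgrade this to convergence of $\a$-th absolute moments,
\[
\E_\a|S_n/\sqrt n|^\a\ \longrightarrow\ \sigma^\a\E|N|^\a,
\]
which together with $n_1/\log x\to\rho_1^{-1}$ yields
\[
\E|M_{n_1}|^\a=n_1^{\a/2}\E_\a|S_{n_1}/\sqrt{n_1}|^\a\ \sim\ (\rho_1^{-1}\log x)^{\a/2}\sigma^\a\E|N|^\a=(\log x)^{\a/2}\mathcal C,
\]
proving \eqref{zero}. In the one-sided case $\P(A_{11}>0)=1$ we have $\Pi_{0,1-n}^{(1)}>0$, so $M_n^{\pm}=\Pi_{0,1-n}^{(1)}S_n^{\pm}$; repeating the argument with $|S_n|$ replaced by $S_n^{\pm}$ and using the symmetry $\E(N^{\pm})^\a=\tfrac12\E|N|^\a$ gives \eqref{zero+}.

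The hard part is the moment-convergence step. For $\a\le 2$ uniform integrability of $|S_n/\sqrt n|^\a$ is free, since $\E_\a|S_n/\sqrt n|^2=\sigma^2<\infty$. For $\a>2$ one needs $\sup_n\E_\a|S_n/\sqrt n|^{\a+\eta}<\infty$ for some $\eta>0$, which by Rosenthal's inequality reduces to the finiteness of $\E_\a|U|^{\a+\eta}=\E[|A_{12}|^{\a+\eta}|A_{11}|^{-\eta}]$. Extracting this from [A2] and $\sigma^2<\infty$ is the delicate point because of the negative power of $|A_{11}|$; I expect to combine the two via H\"older's inequality, or alternatively to handle it by a truncation argument splitting $U=U\Ind{|U|\le t}+U\Ind{|U|>t}$ and estimating the tail contribution separately.
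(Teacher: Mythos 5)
Your factorization $M_n=\Pi_{0,1-n}^{(1)}S_n$ with $U_j=A_{12,j}/A_{11,j}$, the exponential change of measure, the identification $\E_\a U=\mu$, $\E_\a U^2=\sigma^2$, the use of a moment-convergence CLT, and the treatment of the one-sided case via $\Pi_{0,1-n}^{(1)}>0$ and the symmetry of the normal law all coincide with the paper's argument. The issue is the final paragraph, where you set yourself a harder problem than the lemma requires.

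You do not need $\sup_n\E_\a|S_n/\sqrt n|^{\a+\eta}<\infty$ for some $\eta>0$. The classical moment-convergence CLT (von Bahr 1965; the paper cites Gut, Theorem 4.2) states that for i.i.d.\ centred summands with $\E_\a U^2=\sigma^2<\infty$, one has $\E_\a|S_n/\sqrt n|^p\to\sigma^p\E|N|^p$ for all $0<p\le\max(2,r)$ provided only that $\E_\a|U|^r<\infty$ at the exponent $r$ itself, with no margin. For $\a\le 2$ this is supplied by the hypothesis $\sigma^2<\infty$; for $\a>2$ observe that the tilt cancels exactly at exponent $\a$,
\[
\E_\a|U|^\a=\E\bigl[|A_{11}|^\a\,|A_{12}/A_{11}|^\a\bigr]=\E|A_{12}|^\a<\infty
\]
by [A2], with no negative power of $|A_{11}|$. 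Insisting on the margin $\eta>0$ produces $\E_\a|U|^{\a+\eta}=\E[|A_{12}|^{\a+\eta}|A_{11}|^{-\eta}]$, which the lemma's hypotheses do \emph{not} guarantee to be finite — so the H\"older or truncation repair you contemplate would at best fail silently and at worst introduce an unjustified extra assumption. Drop the Rosenthal step, invoke the moment-convergence CLT at exponent $\a$ directly, and the proof closes.
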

\begin{proof}
Denoting $U_{1-i}= A_{12,1-i}A_{11,1-i}^{-1}$, we observe 
\begin{align*}
\E	|M_{n_1}|^{\a }= \E \big|\Pi ^{(1)}_{0,1-n_1}\big |^{\a} |U_0+U_{-1}+\cdots +
U_{1-n_1} |^{\a}. 	\end{align*}
We study the partial sum 
$$
S_n= 
(\sigma^2 n)^{-1/2}
(U_0+U_{-1}+\cdots+U_{1-n})
$$
under change of the measure as in Lemma \ref{lemma:sreconstant}.
Notice that $(U_{-j}) _{j=0}^{\infty }$ is an iid sequence under $\P_\a$ with 
$\E_a U=\mu=0$, $\E_\a U^2=\sigma^2 <\infty$ and $\E_\a|U|^\a=\E|A_{12}|^\a<\infty$.
Indeed for $B_i \in \mathcal{B}(\mathbb{R}),\,i=1,\ldots,n$ 
\begin{align*}
\P_\a \big(\cap_{i=0}^n \{U_{-i}\in B_i\} \big) = \E|\Pi_{0,-n}^{(1)}|^\a \prod_{i=0}^n \Ind {\{U_{-i}\in B_i \}} 
= \prod_{i=0}^n \E|A_{11,-i}|^\a \Ind {\{U_{-i}\in B_i \}} = \prod_{i=0}^n \P_\a (U_{-i}\in B_i).
\end{align*}
Thus, in view of $n_1 \sim \rho_1^{-1}\log x $ it is enough to prove that
\begin{align} \label{limit:notmal}
\lim _{n\to \infty} \E_\a |S_n|^\a = \E |N|^\alpha,\qquad N\sim N(0,1).  
\end{align}
However, due to CLT \cite[Theorem 4.2]{gut:2009},  $\lim_{n\to\infty}\E _{\a}|S_n|^{\a}=\E_\alpha |N_\alpha|^\alpha$ 
where $N_\alpha$ is the standard normal w.r.t $\P_\alpha$. This is \eqref{limit:notmal}. 
For \eqref{zero+} we observe 
	\[
	\E(M_{n_1}^\pm)^\a = \E (\Pi_{0,1-n_1}^{(1)})^\a ((U_0+\cdots+U_{1-n_1})^\pm)^\a
	\]
	and by continuous mapping theorem $\lim_{n\to\infty}\E_\a (S_n^\pm)^\a= \E(N_\a^\pm)^\a$, which implies \eqref{zero+}. 
\end{proof}
\begin{lemma}\label{lem5}
	Assume that $A_{11}=A_{22}\neq 0$ a.s. and $[{\rm A1 \mathchar`- A3}]$. If $\mu\neq 0$ and 
	$\sigma^2<\infty$, then  
	\begin{equation}\label{nonzero}
	\lim _{x\to \infty}(\log x)^{-\a }\E	|M_{n_1}|^{\a }= \rho _1^{-\a}|\mu|^{\a}\quad \text{and} \quad 
	\lim _{x\to \infty}(\log x)^{-\a }\E (M_{n_1}^{\pm})^\a= \rho _1^{-\a}(\mu^{\pm})^\a.
	\end{equation}
\end{lemma}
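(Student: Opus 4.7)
\medskip

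\noindent
\textbf{Proof plan for Lemma \ref{lem5}.} The strategy is to reuse the change of measure from Lemma \ref{lem4} and then replace the central limit theorem there by the (strong) law of large numbers, since now $\mu\neq 0$. Exactly as in Lemma \ref{lem4}, with $U_{-i}= A_{12,-i} A_{11,-i}^{-1}$ and $S_n = U_0+U_{-1}+\cdots+U_{1-n}$ one has
\begin{equation*}
M_{n_1}= \Pi^{(1)}_{0,1-n_1}\, S_{n_1},
\end{equation*}
and, defining $\P_\a$ on $\F_{n_1-1}=\sigma((\bfA_i,\bfB_i)_{1-n_1\le i\le 0})$ by $\E_\a[Z]=\E[|\Pi^{(1)}_{0,1-n_1}|^\a Z]$, the family $(U_{-i})_{0\le i\le n_1-1}$ is i.i.d.\ under $\P_\a$ with $\E_\a U=\mu$, $\E_\a U^2=\s^2<\8$, and $\E_\a |U|^\a=\E|A_{12}|^\a<\8$ by $[\mathrm{A2}]$. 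The same bookkeeping as in Lemma \ref{lem4} gives
\begin{equation*}
\E |M_{n_1}|^\a = \E_\a |S_{n_1}|^\a \quad\text{and}\quad \E (M_{n_1}^{\pm})^\a = \E_\a (S_{n_1}^{\pm})^\a.
\end{equation*}

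Since $\mu\neq 0$, the strong law of large numbers under $\P_\a$ yields $S_n/n\to \mu$ $\P_\a$-a.s., and the continuous mapping theorem implies $|S_n/n|^\a \to |\mu|^\a$ and $((S_n/n)^{\pm})^\a \to (\mu^{\pm})^\a$ almost surely. To upgrade these pointwise limits to convergence of the corresponding expectations, we need uniform integrability of $\{|S_n/n|^\a\}_n$ under $\P_\a$. When $\a\le 2$, $\sigma^2<\8$ yields $\E_\a (S_n/n)^2\le \mu^2+\sigma^2/n$, hence $\{|S_n/n|^\a\}_n$ is bounded in $L^{2/\a}$ with $2/\a\ge 1$, which gives uniform integrability. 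When $\a>2$, Rosenthal's inequality together with $\E_\a|U|^\a<\8$ (from $[\mathrm{A2}]$) and $\E_\a U^2<\8$ give $\E_\a |S_n/n-\mu|^\a =O(n^{-\a/2})$, so $\{|S_n/n|^\a\}_n$ is $L^1$-bounded and uniformly integrable. In either case one concludes
\begin{equation*}
\lim_{n\to \8}\E_\a |S_n/n|^\a =|\mu|^\a,\qquad \lim_{n\to \8}\E_\a ((S_n/n)^{\pm})^\a =(\mu^{\pm})^\a.
\end{equation*}

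It remains to convert the $n_1$-asymptotics to the $x$-asymptotics stated in the lemma. By \eqref{def:notationspf}, $n_1=\lfloor \rho_1^{-1}\log x\rfloor -L$ with $L=O(\sqrt{(\log\log x)\log x})=o(\log x)$, so $n_1/\log x\to \rho_1^{-1}$ as $x\to\8$. Therefore
\begin{equation*}
(\log x)^{-\a}\E|M_{n_1}|^\a = \Bigl(\tfrac{n_1}{\log x}\Bigr)^\a \E_\a |S_{n_1}/n_1|^\a \longrightarrow \rho_1^{-\a}|\mu|^\a,
\end{equation*}
and the same argument applied to $(M_{n_1}^{\pm})^\a$ gives $(\log x)^{-\a}\E(M_{n_1}^{\pm})^\a\to \rho_1^{-\a}(\mu^{\pm})^\a$.

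I expect the only nonroutine point to be the uniform integrability in the regime $\a>2$, where $\sigma^2<\8$ alone is insufficient; this is why one needs to invoke both moment conditions $\E_\a U^2<\8$ and $\E_\a |U|^\a<\8$ via Rosenthal-type control. Apart from that, everything reduces to the standard SLLN plus the same change-of-measure bookkeeping already used in Lemma \ref{lem4}.
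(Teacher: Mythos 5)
Your proof is correct and takes essentially the same route as the paper's: change the measure to $\P_\alpha$, apply the strong law of large numbers to obtain $S_n/n\to\mu$ a.s., and upgrade to convergence of $\alpha$-th moments (the paper does the upgrade by citing Gut's $L^\alpha$-version of the SLLN together with Chung's moment-convergence theorem, while you reconstruct the needed uniform integrability by hand). One small imprecision in your argument: at $\alpha=2$ the claim ``bounded in $L^{2/\alpha}$ with $2/\alpha\ge 1$ gives uniform integrability'' is not quite right, since $L^1$-boundedness alone does not imply UI; there you should instead observe $\E_\alpha(S_n/n)^2=\mu^2+\sigma^2/n\to\mu^2$ directly, or combine a.s.\ convergence with Scheff\'e. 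You could also bypass Rosenthal entirely for $\alpha\ge 1$: by Jensen, $|S_n/n|^\alpha\le n^{-1}\sum_{i}|U_{1-i}|^\alpha$, and the SLLN applied to $|U|^\alpha$ (using $\E_\alpha|U|^\alpha=\E|A_{12}|^\alpha<\infty$ from [A2]) shows the dominating sequence converges a.s.\ and in $L^1$, hence is UI, which yields the UI of $\{|S_n/n|^\alpha\}$ in one stroke for all $\alpha\ge 1$.
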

\begin{proof}
	We follow the idea in the proof of Lemma \ref{lem4} and  
	let $S_n = (U_0+\cdots+U_{1-n}) / n $. 
	Then the first part of \eqref{nonzero} is equivalent to 
\begin{align}
\label{eq:conve:moment}
 \lim _{n\to \8}\E _{\a}|S_n |^{\a}= |\mu|^{\a}.
\end{align} 
By SLLN \cite[Theorem 4.1]{gut:2009} $S_n\to\mu\ a.s.\, \P_\a$ and $L^\a$ as $n\to\infty$. 
Then applying \cite[Theorem 4.5.4]{chung:2000}, we have \eqref{eq:conve:moment}. 
The second part of \eqref{nonzero} 
follows from the continuous mapping theorem, and we omit the details.   
\end{proof}
\begin{proof}[Proof of Theorem \ref{main1term}] 
Recall from that \eqref{def:zn12} that $\wt Z_{n_1,1}=M_{n_1}W_{2,-n_1}$, and $M_{n_1}$ and $W_{2,-n_1}$ are independent. 
For convenience we drop $-n_1$ from $W_{2,-n_1}$ and just write $W_2$. 
	Let  
	\begin{align*}
	I_{M,+} =&\P \left (W_2>x(M_{n_1}^+)^{-1}\right )x^{\a}\left (M_{n_1}^{+}\right )^{-\a},\\
	I_{M,-} =&\P \left (-W_2>x(M_{n_1}^-)^{-1}\right )x^{\a}\left (M_{n_1}^-\right )^{-\a}.
	\end{align*}
	Then
	\begin{equation*}
	\P (\wt Z_{n_1,1}>x)x^{\a}(\log x)^{-\beta}=\E I_{M,+} (M_{n_1}^+)^{\a}(\log x)^{-\beta}+\E I_{M,-} (M_{n_1}^-)^{\a}(\log x)^{-\beta} =: I_++I_-,
	\end{equation*}
	where $\beta =\a \ \mbox{or}\ \a \slash 2$. 
	
	If $\P (A_{11}<0)>0$, the Goldie constant is $c_2/2$ and thus for $\eps >0 $ and sufficiently large $T>0$,   
	$$
	|\P (\pm W_2>x)-c_2 \slash 2|<\eps \quad \mbox{for} \ x>T. 
	$$
	We claim that  
	\begin{align}
	\label{limabs0}
	\lim _{x\to \8}\big (I_++I_--c_2/2 \cdot \E |M_{n_1}|^{\a }(\log x)^{-\beta }\big )=0,
	\end{align}
	which gives the conclusion by Lemma \ref{lem4}. 
	In view of \eqref{nonzero} we have
	\begin{align*}
	&\left |I_++I_--c_2/2\cdot \E |M_{n_1}|^{\a }(\log x)^{-\beta}\right |\\
	&\leq (\log x)^{-\beta}\E \left |I_{M,+}-c_2/2\right |
	(M_{n_1}^+)^{\a }  \big( \Ind {\{M_{n_1}^+< x
		T^{-1}\}}+\Ind {\{ M_{n_1}^+ > x T^{-1}\}} \big)\\
	&\quad +(\log x)^{-\beta} \E \left |I_{M,-}-c_2/2 \right |
	(M_{n_1}^-)^{\a } \big(\Ind {\{M_{n_1}^-<xT^{-1}\}}+\Ind
	{\{M_{n_1}^->xT^{-1}\}} \big)\\
	&\leq \eps (\log x)^{-\beta}\E |M_{n_1}|^{\a }\Ind {\{|M_{n_1}|<xT^{-1}\}}+ C (\log x)^{-\beta }\E |M_{n_1}|^{\a }\Ind {\{|M_{n_1}|>xT^{-1}\}},
	\end{align*} 
	where $(\log x)^{-\beta} \E|M_{n_1}|^\a$ is bounded by Lemmas \ref{lem4} and \ref{lem5}. 
	By \eqref{tailM} below 
	\begin{equation}\label{tail0}
	\lim _{x\to \8}(\log x)^{-\beta }\E |M_{n_1}|^{\a }\Ind {\{|M_{n_1}|>xT^{-1}\}}=0.
	\end{equation}
	Then letting $x\to \8$ first and then $\eps \to 0$, we obtain \eqref{limabs0}. Now \eqref{zero} 
	yields the 
	`$+$' part of \eqref{clt+}, while  the `$+$' part of 
	\eqref{s+} with $c_{2,\pm}=c_2/2$ follows form the first part of \eqref{nonzero}. 
	The `$-$' parts 
	hold by changing signs before $W_2$ in both $I_{M,\pm}$. 
	Notice that if $\P(A_{11}<0)>0$, then we do not need to distinguish '$+$' and '$-$' parts in both \eqref{clt+} and \eqref{s+}. 

	If $\P (A_{11}>0)=1$, the Goldie's constants are $c_{2,\pm}$ and we claim that  
	\begin{equation}\label{limplus0}
	\lim _{x \to \8}\left (I_\pm-c_{2,\pm}\E (M_{n_1}^\pm)^{\a }(\log
	x)^{-\beta }\right )=0.
	\end{equation}
	Indeed, similarly as before we write 
	\begin{align*}
	&\left |I_\pm-c_{2,\pm}\E (M^\pm_{n_1})^{\a }(\log x)^{-\beta}\right |\\
	&\leq (\log x)^{-\beta}\E \left |I_{M,\pm}-c_{2,\pm}\right |
	(M_{n_1}^\pm)^{\a }  \big( \Ind {\{M_{n_1}^\pm < x
		T^{-1}\}}+\Ind {\{ M_{n_1}^\pm >x T^{-1}\}} \big)\\
	&\leq \eps (\log x)^{-\beta}\E |M_{n_1}|^{\a }+ C (\log x)^{-\beta}\E |M_{n_1}|^{\a }\Ind {\{|M_{n_1}|>xT^{-1}\}},
	\end{align*} 
	where the last term tends to $0$ as $x \to \8$ and $\varepsilon \to 0$ under \eqref{tail0}. 
	Notice that \eqref{limplus0} implies 
	\begin{equation*}
	\left |I_++I_--c_{2,+}(\log x)^{-\beta}\E (M_{n_1}^{+})^\a-c_{2,-}(\log x)^{-\beta}\E (M_{n_1}^{-})^\a \right |\to 0. 
	\end{equation*}
	Thus by \eqref{zero+} of Lemma \ref{lem4} for $\beta =\a \slash 2$ we obtain the `$+$' part of \eqref{clt+} with $c_2=c_{2,+}+c_{2,-}$.
	The `$-$' part of \eqref{clt+} follows by changing signs before $W_2$ of $I_{M,\pm}$, so that $c_{2,\pm}$ changed to $c_{2,\mp}$ in 
	\eqref{limplus0}, though these operations yield the same result. The `$\pm$' parts of \eqref{s+} are similar, but we rely on 
	the second part of \eqref{nonzero} in Lemma \ref{lem5}. 

	Now we are going to prove \eqref{tail0}. We apply Lemma \ref{Products} to the case $\rho_1=\rho_2$ and \newline
	$I_{i,k}:=|\Pi_{0,2-1}^{(1)}A_{12,1-i} \Pi_{-i,1-n_1}^{(2)}|$, i.e. $k=n_1-i$. 
	For $m\in \N$ in view of \eqref{Product1} we have
	\begin{align*}
	\P (|M_{n_1}|>xe^mT^{-1})&\leq \sum _{i=1}^{n_1}\P \left (|I_{i,k}| >xe^mT^{-1}n_1^{-1}\right )\\
	&\leq C_1n_1^{\a +2} (\log x)^{-\xi } x^{-\a} e^{-m\alpha-m\varepsilon_x }T^{\alpha +1},
	\end{align*}
	where $e^mT^{-1}n_1^{-1}$ plays the role of $T$ in \eqref{xi}. Hence 
	\begin{align*}
	\E |M_{n_1}|^{\a}\Ind {\{|M_{n_1}|\geq x T^{-1}\}}
	&\leq \sum _{m\geq 0 } \E |M_{n_1}|^{\a}\Ind {\{xe^mT^{-1}\leq |M_{n_1}|\leq xe^{m+1}T^{-1}\}}\\
	&\leq \sum _{m\geq 0}e^{\a (m+1)}x^{\a}T^{-\a}\P (|M_{n_1}|>xe^mT^{-1}) \\
	& \le C_1 T n_1^{\a+2} (\log x)^{-\xi}\sum_{m\ge 0} e^{-\varepsilon_x m} \\
	&\leq C_2 T n_1^{\a+2} (\log x)^{-\xi} \varepsilon_x^{-1}. 
	\end{align*}
	Since it follows from \eqref{xi} that
	\[
	\varepsilon_x^{-1} \le C_3 (\log x)^{1/2},
	\]
	and so by \eqref{ovD} we obtain 
	\begin{equation}\label{tailM}
	\E |M_{n_1}|^{\a }\Ind {\{|M_{n_1}|>xT^{-1}\}} \le C(\log x)^{-\xi+\alpha+5/2} T\to 0. 
	\end{equation}	
\end{proof}

\subsection{Proof of Theorem \ref{main}}
\label{subsec:pf:main}
The aim of this section is to prove the 
following theorem 
\begin{theorem}\label{mainterm} 
Under assumptions of Theorem \ref{main}
	 \begin{equation}\label{plusminus}
	 	 \lim _{x \to \infty}\P (\pm \wt Z_{n_1}>x)x^{\a}(\log
 x)^{-1}=\mathcal{D} \a \rho _1^{-1}.\end{equation}
\end{theorem}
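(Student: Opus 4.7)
The strategy parallels that of Theorem \ref{main1term}. The plan is to retain the decomposition $\wt Z_{n_1} = M_{n_1} W_{2,-n_1} + \wt Z_{n_1,2}$ from \eqref{def:zn12}. The auxiliary term $\wt Z_{n_1,2}$ is handled by the negligibility estimates of Section \ref{sec:neg} (the analogue of Corollary \ref{smaller} with the present choice of $\ov D$), giving $\P(|\wt Z_{n_1,2}| > x) = o(x^{-\alpha})$, hence $o(x^{-\alpha}\log x)$, and consequently negligible compared with the target rate. All the work is therefore concentrated on the main term $M_{n_1} W_{2,-n_1}$, in which $M_{n_1}$ is independent of $W_{2,-n_1} \eqd W_2$.

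For this product a Breiman-type computation, justified by a tail control on $M_{n_1}$ in the flavour of \eqref{tailM} but adapted to the present target rate $\log x$ (rather than $(\log x)^{\alpha/2}$), yields
\[
\P(\pm M_{n_1} W_{2,-n_1} > x) \sim c_{2,\pm}\,\E(M_{n_1}^{\pm})^{\alpha}\, x^{-\alpha} + c_{2,\mp}\,\E(M_{n_1}^{\mp})^{\alpha}\, x^{-\alpha}
\]
when $A_{22} > 0$ a.s.\ and $A_{11} \geq 0$ a.s., while in the remaining two cases only $\E|M_{n_1}|^{\alpha}$ is accessible: either $\P(A_{22}<0) > 0$ forces the Goldie symmetry $c_{2,+} = c_{2,-} = c_2/2$ at the level of $W_2$, or $\P(A_{11}<0)>0$ with positive $A_{22}$ forces the analogous symmetry $c_{R,+} = c_{R,-} = c_R/2$ at the level of the auxiliary SRE below. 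In both sub-cases the right-hand side collapses to $c_2 c_R x^{-\alpha}/2$. Combining with $n_1 \sim \rho_1^{-1}\log x$ and the growth rate \eqref{cR} then produces the claimed value $\mathcal{D}\alpha\rho_1^{-1}$ in \eqref{plusminus}.

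The main obstacle, and the only ingredient genuinely new beyond Theorem \ref{main1term}, is proving the linear-in-$n$ asymptotics \eqref{cR}. Writing $M_n = \Pi^{(2)}_{0,1-n}\X_n$ with $\X_n = \sum_{i=1}^{n} V_0 V_{-1}\cdots V_{2-i}\,U_{1-i}$, and performing the change of measure $\E_\alpha[Z] = \E[|\Pi^{(2)}_{0,1-n}|^{\alpha} Z]$ exactly as in Lemma \ref{lemma:sreconstant}, the pair $(V_t,U_t)$ becomes i.i.d.\ under $\P_\alpha$ with $\E_\alpha|V|^{\alpha} = \E|A_{11}|^{\alpha} = 1$. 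Since $\P(A_{11} \neq A_{22}) > 0$, convexity of $\beta \mapsto \E_\alpha|V|^{\beta}$ on $[0,\alpha+\eta]$ together with $f(0)=f(\alpha)=1$ forces $\E_\alpha\log|V| < 0$, so $\X_n$ converges $\P_\alpha$-a.s.\ to the stationary solution $\X$ of the SRE $X_t = V_t X_{t-1} + U_t$. Crucially, under $\P_\alpha$ this auxiliary SRE sits precisely in the \emph{Kesten regime} $\E_\alpha|V|^{\alpha} = 1$, and assumptions [A4]--[A6] provide the non-arithmeticity and moment control needed to invoke Kesten--Goldie, yielding $\P_\alpha(\pm\X > x) \sim c_{R,\pm} x^{-\alpha}$ with constants $c_{R,\pm}\geq 0$ summing to $c_R>0$.

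The passage from this Kesten tail of $\X$ to the linear growth $\E_\alpha(\X_n^{\pm})^{\alpha} \sim c_{R,\pm}\alpha n$ then rests on the classical truncated-moment identity $\E[|Y|^{\alpha}\Ind{|Y|\leq T}] \sim c\alpha\log T$ for a Pareto-tailed $Y$, applied to $\X$ at an effective scale $T$ determined by the error $\X - \X_n$: the latter is of multiplicative order $|V_0 V_{-1}\cdots V_{1-n}|$, which under $\P_\alpha$ decays exponentially at rate $-\E_\alpha\log|V|$, so that $\log T \asymp n$ and the coefficient $\alpha$ in \eqref{cR} appears from the $\alpha\log T$ of the integration by parts. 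The quantitative control of the remainder $\X - \X_n$, in particular showing that it does not spoil the Pareto leading order, is what condition [A6] is designed to secure, and carrying it out carefully is the most technical step of the whole proof.
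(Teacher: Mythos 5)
Your proposal reuses the decomposition $\wt Z_{n_1}=M_{n_1}W_{2,-n_1}+\wt Z_{n_1,2}$ from Theorem \ref{main1term} and then dismisses $\wt Z_{n_1,2}$ as $o(x^{-\a})$. This is precisely what the paper warns against right before stating Theorem \ref{mainterm}: ``splitting $\wt Z_{n_1}$ into $\wt Z_{n_1,1}$ and $\wt Z_{n_1,2}$, as before, is not sufficient. It turns out that both parts may contribute to the asymptotics.'' Concretely, the bound you invoke (Corollary \ref{smaller}) is proved only under the hypothesis $\rho_2\leq\rho_1$. In Theorem \ref{main} one has $\P(A_{11}\neq A_{22})>0$ and $\E|A_{ii}|^\a=1$ for both $i$, which in no way forces $\rho_2\leq\rho_1$. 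When $\rho_2>\rho_1$, the sum $\wt Z_{n_1,2}$ contains products $\Pi^{(1)}_{0,2-i}A_{12,1-i}\Pi^{(2)}_{-i,1-i-k}B_{2,-i-k}$ with $\rho_1 i+\rho_2 k$ of order $\log x$ despite $i+k\leq n_1-1$; these land in the ``resonant'' band $(n_1\rho_1,n_2\rho_1)$ that Lemma \ref{Products} does not cover at all, so no small bound is available. The same obstruction breaks the uniform tail control of $M_{n_1}$ (the analogue of \eqref{tailM}): for $\rho_2>\rho_1$ the term at $i=1$ in $M_{n_1}$ has exponent $\approx\rho_2 n_1>\rho_1 n_1$, outside the ``short'' regime, so your Breiman truncation step is unjustified as well.

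The paper's block decomposition is not a convenience but the mechanism that resolves this: writing $\wt Z_{n_1}=\sum_{s=0}^{J-1}(R_s+Q_s)+R_J$ with blocks of length $K'\approx K=\lfloor\rho_1\rho_2^{-1}(L/2-1)\rfloor$, the length $K$ is calibrated so that $\rho_2 K\leq\rho_1 L/2$, which keeps every product appearing in $R_{s,1}=\Pi^{(1)}_{0,1-sK}M_{sK,sK+K'}W_{2,-sK-K'}$ in the short regime of Lemma \ref{Products} regardless of the sign of $\rho_1-\rho_2$. Each block then contributes $\sim c_2c_R\a K\,x^{-\a}$ (Lemma \ref{block}), the single-large-jump argument (Lemmas \ref{two}, \ref{allsmall}) makes $\P(\sum_s R_{s,1}>x)\sim\sum_s\P(R_{s,1}>x)$ rigorous, and the $J\sim n_1/K$ blocks together produce the $\log x$ factor. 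Your second half --- change of measure to $\P_\a$, the auxiliary SRE $X_t=V_tX_{t-1}+U_t$ sitting in the Kesten regime under $\P_\a$, and linear growth of $\E_\a(\X_n^\pm)^\a$ from a truncated-moment argument --- is consistent with Lemma \ref{lem2} and Theorem \ref{constant}, but it is an ingredient fed into Lemma \ref{block}, not a substitute for the block decomposition.
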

Once \eqref{plusminus} is proved,  
Theorem \ref{main} follows. 
In this case splitting $\wt Z_{n_1}$ into $\wt Z_{n_1,1}$ and $\wt Z_{n_1,2}$, as before, is not sufficient. It turns out that both parts may contribute to the asymptotics.    
Therefore, we need to decompose $\wt Z_{n_1}$ in a different way. 
We do it in two steps to arrive finally at blocks of the type 
\begin{equation}\label{blocklenght}
Z_{h,j}=\sum _{i=h+1}^j\Pi ^{(1)}_{0,2-i}A_{12,1-i}W_{2,-i}.\end{equation}
To construct properly $Z_{h,j}$ we define  
\begin{equation}\label{K}
 K=K(x)=\lfloor  \rho _1 \rho _2^{-1} (L/2-1)\rfloor
\end{equation}
and choose
\begin{equation*}
J=J(x)\in \N \ \mbox{such that}\ JK\leq n_1<(J+1)K.\end{equation*}
 Further, let 
 \begin{equation*}
 K'=K-\lfloor K^{\theta} \rfloor 
\end{equation*}
for a fixed $0<\theta <1$. Then $j-i$ in \eqref{blocklenght} is equal to $K$ or $K^{\theta}$, which is much smaller than $n_1$. 
\vspace{2mm}  

\noindent
{\bf First decomposition.} Firstly we decompose $\wt Z_{n_1}$ as 
\begin{align*}
\wt Z_{n_1}=& \Big ( \sum _{s=0}^{J-1}
\Big(\sum _{i=sK+1}^{sK+K'} 
+ \sum _{i=sK+K'+1}^{(s+1)K} \Big)
+\sum _{i=JK+1}^{n_1} \Big ) \Pi ^{(1)}_{0,2-i}A_{12,1-i}W_{2,-i}\\
=&\sum _{s=0}^{J-1}\Big(\underbrace{Z_{sK,sK+K'}}_{R_s}+\underbrace{Z_{sK+K', (s+1)K}}_{Q_s}\Big)+
\underbrace{Z_{JK,n_1}}_{R_J}=:\sum_{s=0}^{J-1}(R_s+Q_s) +R_J,
\end{align*}
where $R_s$ are blocks of length $K'=K-\lfloor K^{\theta}\rfloor$ and $Q_s$ are those of $\lfloor K^\theta \rfloor$. Introducing shorter blocks $Q_s$, we may regard $R_s$ as nearly ``independent''. Moreover, they constitute the main part and due to ``independence'',
\begin{equation}\label{heur1}
\P \left (\pm \wt Z_{n_1}>x\right )\sim \P \left( \sum _{s=0}^{J-1}\pm R_s >x\right ) \sim \sum _{s=0}^{J-1}\P \left(\pm R_s >x\right ), 
\end{equation}
All of these approximations are heuristic and not completely exact at this stage. 
This kind of approach has already been taken in \cite{buraczewski:damek:mikosch:zienkiewicz:2013}.\\
{\bf Second decomposition.} 
Secondly, as in \eqref{decomp:wtXunders}, we apply the iteration \eqref{uniSRE2} of $W_2$ to blocks $Z_{h,j}$
and we write
\begin{align*}
Z_{h,j}=&\Pi ^{(1)}_{0,1-h}\sum _{i=h+1}^j\Pi ^{(1)}_{-h,2-i}A_{12,1-i}W_{2,-i},\qquad j\ge i\\
:=&\Pi ^{(1)}_{0,1-h}M_{h,j}W_{2,-j}+\Pi ^{(1)}_{0,1-h}Z_{h,j,2},
\end{align*}
where
\begin{align}
\label{def:M}
M_{h,j}=&\sum _{i=h+1}^j\Pi ^{(1)}_{-h,2-i}A_{12,1-i}\Pi ^{(2)}_{-i,1-j}, \\
Z_{h,j,2}=&\sum _{i=h+1}^j\Pi ^{(1)}_{-h,2-i}A_{12,1-i}\sum _{k=0}^{j-i-1}\Pi ^{(2)}_{-i,1-i-k}B_{2,-i-k}. \nonumber
\end{align}
Accordingly, $R$ and $Q$ are further decomposed as  
\begin{equation*}
R_s = R_{s,1}+R_{s,2}\quad \text{and}\quad Q_s = Q_{s,1}+Q_{s,2}, 
\end{equation*}
where for $s \le J-1$ 
\begin{align}
\label{eq:def:Rs1}
R_{s,1}=\Pi ^{(1)}_{0,1-sK}M_{sK,sK+K'}W_{2,-sK-K'}\quad &\mbox{and}\quad R_{s,2}=\Pi ^{(1)}_{0,1-sK}Z_{sK,sK+K',2}, \\
Q_{s,1}=\Pi ^{(1)}_{0,1-sK-K'}M_{sK+K',(s+1)K}W_{2,-(s+1)K}\quad &\mbox{and}\quad Q_{s,2}=\Pi ^{(1)}_{0,1-sK-K'}Z_{sK+K',(s+1)K,2},  
\nonumber
\end{align}
and for $s=J$
\begin{align}
 R_{J,1} = \Pi_{0,1-JK}^{(1)} M_{JK,n_1}W_{2,-n_1}\quad \mbox{and}\quad R_{J,2}=\Pi_{0,1-JK}^{(1)} Z_{JK,n_1,2}.
\end{align}

Notice that $\Pi ^{(1)}_{0,1-sK}, M_{sK,sK+K'}, W_{2,-sK-K'}$ are independent and $M_{sK,sK+K'}\stackrel{d}{=}M_{0,K'}=:M_{K'}$. 
Finally
\begin{equation}\label{Zn1}
\wt Z_{n_1}=\sum_{s=0}^{J-1}R_{s,1}+\sum_{s=0}^{J-1}Q_{s,1}+\sum_{s=0}^{J-1}(R_{s,2}+Q_{s,2})+R_{J}.
\end{equation}
Now we are able to make \eqref{heur1} more precise. The tail asymptotics of $\wt Z_{n_1}$ is determined by $\P \left (\sum _{s=0}^{J-1}R_{s,1}>x\right )$. Moreover,
due to separation between $R_{s,1}$ and $R_{r,1},\,s\neq r$, they are kind of    
independent (see Lemma \ref{two}), i.e.
\begin{equation}\label{sumR}
\P \left (\sum _{s=0}^{J-1}R_{s,1}>x\right )= \sum _{s=0}^{J-1} \P \left (R_{s,1}>x\right )+\ \mbox{lower order terms}.
\end{equation}
The decomposition 
\eqref{sumR} constitutes the main part of the proof of Theorem \ref{main}. 
Moreover, by Lemma \ref{block} a single block behaves as 
\begin{equation*}
\P \left (R_{s,1}>x\right )\sim x^{-\a} K,
\end{equation*}
and so
\begin{equation*}
\sum _{s=0}^{J-1} \P \left (R_{s,1}>x\right )\sim x^{-\a}(\log x).
\end{equation*}
Similarly,
\begin{equation*}
\P (R_J>x)=O(x^{-\a}K).
\end{equation*}
The other terms in \eqref{Zn1} are negligible ad they are taken care of in section \ref{mainpart2}. In Lemma \ref{rq2} and Corollary \ref{Qsum} we prove that
\begin{align*}
\P \Big (\sum_{s=0}^{J}|R_{s,2}|+\sum_{s=0}^{J-1}|Q_{s,2}| >\del  x \Big)&=o(x^{-\a}\min(\del , 1)^{-\a -1}\\
\P \Big (\sum_{s=0}^{J-1}|Q_{s,1}| >\del x \Big )&=o(x^{-\a}\log x)\min(\del , 1)^{-\a}.
\end{align*}
Therefore, 
\begin{equation*}
\P \left (\wt Z_{n_1}>x\right )= \sum _{s=0}^{J-1} \P \left (R_{s,1}>x\right )+\ \mbox{lower order terms}.
\end{equation*}
The tail of $R_{s,1}$ is described in Lemma \ref{block}. 
In view of \eqref{eq:def:Rs1} and Breiman's lemma, we have 
\begin{equation*}
\P (\pm R_{s,1}>x)\sim \E (M_{K'}^\pm)^{\a }\P (W_2>x)+\E (M_{K'}^\mp)^{\a }\P (-W_2>x),\quad \mbox{as}\ x\to \8 
\end{equation*}
(cf. \eqref{tailw1:breiman}). 

To handle $\E (M_n^{\pm})^{\a }$, we 
change the measure as in \eqref{changemeasure}. Namely, we 
consider SRE \eqref{newsre} under $\P_{\a}=\P _{\a _2}$. Again if $\P (A_{22}<0)>0$, only $\E |M_n|^{\a}$ is needed. 

\begin{lemma}\label{lem2}
Let $V=A_{11} A_{22}^{-1}$. 
	Under assumptions of Theorem \ref{main}, 
$ 0<\rho_V =\E _{\a}|V|^{\a}\log |V| <\8$ holds, 
the stationary solution $X_0$ to \eqref{newsre}  as well as the limits
\begin{equation}\label{limit1}
\lim _{x\to \8}\P (\pm X_0>x)x^{-\a}\quad \text{and} \quad \lim _{x \to \infty} (n\a)^{-1} \E |M_n|^{\a }
\end{equation}
exist. Moreover,
\begin{equation*}
c_R:=\lim _{n \to \infty} (n\a)^{-1}\E |M_n|^{\a }=\lim _{x\to \8} \rho_V \P (|X_0|>x)x^{-\a}.
\end{equation*} 
If additionally $\P (A_{22}>0)=1$ then 
\begin{equation}\label{limit2}
c_{R,\pm}:=\lim _{n \to \infty}(n\a )^{-1}\E (M_n^{\pm})^{\a }= 
\lim _{x\to \8} \rho_V \P (\pm X_0>x)x^{-\a}
\end{equation}
exists and $c_R=c_{R,+}+c_{R,-}$ is not zero iff for every $x\in \R$, $\P (A_{11}x+A_{12}=A_{22}x)<1$. 
Notice that if also $\P(A_{11}>0)=1$ in \eqref{limit2} then $c_{R,\pm}=c_R/2$. 
\end{lemma}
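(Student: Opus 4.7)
The plan is to reduce the moments of $M_n$ to moments of partial sums for a classical Kesten--Goldie SRE via a change of measure, and then extract linear growth in $n$ from Goldie's representation of the tail constant. Setting $V_j=A_{11,j}A_{22,j}^{-1}$ and $U_j=A_{12,j}A_{22,j}^{-1}$, the definition \eqref{def:M} rearranges to $M_n=\Pi^{(2)}_{0,1-n}\, S_n$, where $S_n:=\sum_{i=1}^{n} V_0 V_{-1}\cdots V_{2-i}\, U_{1-i}$ (empty product $=1$). Using $\E|A_{22}|^\a=1$ from $[\mathrm A1]$, the Radon--Nikodym derivative $|\Pi^{(2)}_{0,1-n}|^\a$ defines a tilted probability $\P_\a$ on $\mathcal F_n$, under which $(V_j,U_j)$ is i.i.d. Consequently $\E|M_n|^\a=\E_\a|S_n|^\a$, and $\E(M_n^\pm)^\a=\E_\a(S_n^\pm)^\a$ when $A_{22}>0$ a.s.

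Next I would verify that the tilted SRE $X_t=V_tX_{t-1}+U_t$ satisfies $\mathcal A(\a)$ under $\P_\a$: $\E_\a|V|^\a=\E|A_{11}|^\a=1$ by $[\mathrm A1]$, $\E_\a|U|^\a=\E|A_{12}|^\a<\infty$ by $[\mathrm A2]$, finiteness of $\rho_V=\E_\a|V|^\a\log|V|$ follows from $[\mathrm A2]$ and $[\mathrm A6]$ combined with $\log^+ t\le \eta^{-1}t^\eta$ (splitting according to $|A_{22}|\gtrless 1$), non-arithmeticity of $\log|V|$ under $\P_\a$ comes from $[\mathrm A5]$ (the tilt has the same null events in the law of $V$), and the convexity trick already used in Lemma \ref{lemma:sreconstant}, applied to $\beta\mapsto\E_\a|V|^\beta$ (equal to $1$ at $\beta=0$ and $\beta=\a$), gives $\E_\a\log|V|<0$. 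Together with $\E_\a\log^+|U|<\infty$ this yields a unique stationary $X_0$ with $S_n\to X_0$ a.s.\ under $\P_\a$, and Goldie's theorem delivers $x^\a\P_\a(\pm X_0>x)\to K_\pm$ and $x^\a\P_\a(|X_0|>x)\to K:=K_++K_-$, with $K>0$ if and only if $\P_\a(Vx+U=x)<1$ for every $x\in\R$, i.e.\ $\P(A_{11}x+A_{12}=A_{22}x)<1$ for every $x$.

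The linear growth of $\E_\a|S_n|^\a$ follows from $S_n=V_0 S'_{n-1}+U_0$ with $S'_{n-1}\stackrel{d}{=}S_{n-1}$ independent of $(V_0,U_0)$ under $\P_\a$: combined with $\E_\a|V|^\a=1$, this gives the telescoping identity
$$
\E_\a|S_n|^\a-\E_\a|S_{n-1}|^\a=\E_\a\bigl[|V_0 S'_{n-1}+U_0|^\a-|V_0 S'_{n-1}|^\a\bigr].
$$
Subadditivity/Minkowski and $\E_\a|V|^\beta<1$ for $\beta\in(0,\a)$ (same convexity argument) give $\sup_n\E_\a|S_n|^\beta<\infty$ for every $\beta<\a$, so that $S_n\to X_0$ also in $L^\beta(\P_\a)$; combined with the pointwise estimate $\bigl||a+b|^\a-|a|^\a\bigr|\le C(|b|^\a+|a|^{\a-1}|b|)$, dominated convergence sends the right-hand side to $\E_\a[|VX_0'+U|^\a-|VX_0'|^\a]=\a\rho_V K$, the last equality being Goldie's representation of the tail constant. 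Cesaro averaging then yields $(\a n)^{-1}\E_\a|S_n|^\a\to\rho_V K$, i.e.\ $c_R=\rho_V K$. When $A_{22}>0$ a.s.\ the same telescoping applied to $(S_n^\pm)^\a$, with Goldie's one-sided identity $\E_\a[((VX_0'+U)^\pm)^\a-((VX_0')^\pm)^\a]=\a\rho_V K_\pm$ (using $(V_0S'_{n-1})^\pm=V_0(S'_{n-1})^\pm$ when $V>0$, or the Kesten--Goldie symmetrisation $K_+=K_-$ when $\P_\a(V<0)>0$), yields $c_{R,\pm}=\rho_V K_\pm$.

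The main obstacle is the dominated convergence in the displayed telescoping identity: both $\E_\a|V S'_{n-1}+U|^\a$ and $\E_\a|V S'_{n-1}|^\a$ diverge as $n\to\infty$ (their formal limit $\E_\a|X_0|^\a$ is infinite, since $X_0$ is regularly varying of index $\a$), and only the cancelled difference survives. This cancellation is the heart of Goldie's implicit-renewal argument, and the hypothesis $[\mathrm A6]$ is imposed precisely to supply the extra moment $\E_\a|V|^{\a+\eta}<\infty$ needed to control the difference uniformly in $n$.
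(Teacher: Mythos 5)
Your proposal shares the paper's opening move—the change of measure $\E_\a[\,\cdot\,]=\E[|\Pi^{(2)}_{0,1-n}|^\a\,\cdot\,]$ reducing $\E|M_n|^\a$ to $\E_\a|S_n|^\a$ and the verification that the tilted recursion $X_t=V_tX_{t-1}+U_t$ satisfies $\mathcal A(\a)$—but it then departs in how the linear growth of $\E_\a|S_n|^\a$ is extracted. The paper simply invokes Theorem~\ref{constant}, whose proof in Section~\ref{sec:uni:SRE} is a tail-decomposition argument: the expectation $\E\X_n^\a$ is split into ranges of $\X_n$ (very large, intermediate, small, and a main band $e^{\sqrt n}<\X_n\le e^{m_1}$), the outer ranges are killed by Chernoff bounds from Lemma~\ref{Products}, and the main band is analysed on a geometric grid $e^{m\delta}$ by showing $\P(W_m)e^{m\delta\a}\to c_+(1-e^{-\delta\a})$ uniformly in $m$. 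You instead argue directly: from $S_n=V_0S'_{n-1}+U_0$ and $\E_\a|V_0|^\a=1$ you get the telescoping identity $\E_\a|S_n|^\a-\E_\a|S_{n-1}|^\a=\E_\a[|V_0S'_{n-1}+U_0|^\a-|V_0S'_{n-1}|^\a]$, pass to the limit in the increment by dominated convergence (an envelope such as $C(|U_0|^\a+|V_0|^{\a-1}|U_0|(X^{|\cdot|})^{\a-1})$ with $X^{|\cdot|}$ the solution of the modulus SRE does the job, since $X^{|\cdot|}$ has finite moments of every order $<\a$ and is independent of $(V_0,U_0)$), identify the limit with $\a\rho_V K$ via Goldie's constant formula from Lemma~\ref{lemma:4447BDM}, and finish by Cesaro. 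Both routes work; yours is shorter, self-contained (bypassing Theorem~\ref{constant} entirely), and is essentially the extension to signed $A$ of the telescoping identity of \cite{buraczewski:damek:zienkiewicz:2018} cited in the remark after Theorem~\ref{constant}. The price is paid exactly where the paper's more technical grid argument is indifferent: in the one-sided moments for signed $V$.

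That is where I see a genuine gap. For \eqref{limit2} you correctly note that when $V\geq 0$ a.s.\ the telescoping survives because $(V_0S'_{n-1})^\pm=V_0(S'_{n-1})^\pm$ and $\E_\a V^\a=1$. But when $\P(A_{22}>0)=1$ and $\P(A_{11}<0)>0$, i.e.\ $\P_\a(V<0)>0$, one has $\E_\a((V_0S'_{n-1})^+)^\a=\E_\a(V^+)^\a\E_\a(S_{n-1}^+)^\a+\E_\a(V^-)^\a\E_\a(S_{n-1}^-)^\a\neq\E_\a(S_{n-1}^+)^\a$, so the increment $\E_\a(S_n^+)^\a-\E_\a(S_{n-1}^+)^\a$ is \emph{not} the single difference you display, and merely citing $K_+=K_-$ names the target without producing the convergence. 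The fix is a coupled affine recursion: with $a_n=\E_\a(S_n^+)^\a$, $b_n=\E_\a(S_n^-)^\a$, $p=\E_\a(V^+)^\a$, $q=\E_\a(V^-)^\a$ ($p+q=1$, $0<q<1$), one has $a_n=pa_{n-1}+qb_{n-1}+D_n^+$ and $b_n=qa_{n-1}+pb_{n-1}+D_n^-$ with $D_n^\pm:=\E_\a[((V_0S'_{n-1}+U_0)^\pm)^\a-((V_0S'_{n-1})^\pm)^\a]$. Then $s_n:=a_n+b_n$ telescopes exactly as in your absolute-value case (giving $s_n/n\to\a\rho_V K$), while $t_n:=a_n-b_n$ satisfies $t_n=(p-q)t_{n-1}+(D_n^+-D_n^-)$ with $|p-q|<1$ and bounded forcing, hence $t_n$ is bounded and $a_n/n,b_n/n\to\a\rho_V K/2$. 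You should either supply this step or, as the paper does, delegate the one-sided limits to Theorem~\ref{constant}, whose grid argument needs no case analysis on the sign of $V$.
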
 


\begin{proof}	
As in the proof of Lemma \ref{lemma:sreconstant}, we write 
\begin{align*}
 \E|M_{n}|^{\a }=&  \E | \Pi ^{(2)}_{0,1-n} |^{\a} 
\big|
\sum_{i=1}^{n} V_0\cdots V_{2-i} U_{1-i}
\big|^{\a} :=\E _{\a}\big|  
\sum_{i=1}^{n} V_0\cdots V_{2-i} U_{1-i}
\big|^{\a} .
\end{align*}
Now $\sum_{i=1}^{n} V_0\cdots V_{2-i} U_{1-i}$ plays the role of $\mathcal{X }_n$ in Theorem \ref{constant} and so
for \eqref{limit1} it suffices to check assumptions $\mathcal A(\alpha)$ and \eqref{perpetuity} for the recursion $X_t=V_tX_{t-1}+U_t$. 

By [A1], $\E _{\a}|V |^{\a }=\E |A _{11}|^{\a }=1$ and $\E _{\a}|V |^{\a +\eta }+ \E _{\a}|U |^{\a +\eta }<\8$ for some $\eta>0$ 
(see [A5]). A similar argument as with SRE \eqref{newsre}, $0<\rho<\infty$ follows (Take $f(\beta)=\E_\a|V|^\beta$ with $f(0)=f(\a)=1$ and 
apply convexity of $f(\beta)$ together with $f(\alpha+\eta)<\infty$). Due to [A4], $\log|V|$ is non-arithmetic. 
Moreover, $Vx+U=x \Leftrightarrow A_{11}x +A_{12}=A_{22}x$. Thus the assumptions are satisfied. 

For \eqref{limit2} we write
\begin{align*}
\E	(M_{n}^\pm)^{\a }
=& \E (\Pi ^{(2)}_{0,1-n} )^{\a}\big\{\big (
\sum _{i=1}^{n}V_0 \cdots V_{2-i} U_{1-i}\big )^{\pm} \big\}^\a	=\E_\a \big\{\big (
\sum _{i=1}^{n}V_0 \cdots V_{2-i} U_{1-i}\big )^{\pm} \big\}^\a
\end{align*}
and we proceed as before using Theorem \ref{constant}. 
\end{proof}
\begin{proof}[Proof of Theorem \ref{mainterm}] 
We are going to show that 
	\begin{equation}\label{Zasym}
	\lim _{x\to \8}\P (\pm\wt Z_{n_1}>x)x^{\a}(\log x)^{-1}=\lim _{x\to \8}
\sum _{s=0}^{J-1}\P (\pm R_{s,1}>x) x^{\a }(\log x)^{-1}=\a \rho_1^{-1}\mathcal{D}.
	\end{equation}
Since the proofs are quite similar, we only treat the positive case. 
Firstly notice that by the previous lemma 
\[
 \P(R_{J,1}>x) \le C\E|M_{JK,n_1}|^\a x^{-\a}=C\E|M_{n_1-JK}|^\a x^{-\a} \le CK x^{-\a} =o(x^{-\a}\log x).
\]
To justify  \eqref{Zasym} we are going to use auxiliary results gathered in section \ref{mainpart2}. 
In view of Lemma \ref{rq2} and Corollary \ref{Qsum} we have
	\begin{align*}
	\P (\wt Z_{n_1}>x) &\leq \P \Big (\sum _{s=0}^{J-1} R_{s,1}>(1-\eps)x\Big )
         +\P \Big (\sum _{s=0}^{J-1} Q_{s}+\sum _{s=0}^{J} R_{s,2} > \eps x \slash 2 \Big )+ \P (R_{J,1}>\eps x\slash 2)\\
	&= \P \Big (\sum _{s=0}^{J-1} R_{s,1}>(1-\eps)x\Big )+\eps ^{-\a -1}o(x^{-\a}\log x),
	\end{align*}	
where we notice that $J \le n_1 /K=o(\log x)$, and similarly 
	\begin{align*}
     \P (\wt Z_{n_1}>x) \ge 
	\P \Big (\sum _{s=0}^{J-1} R_{s,1}>(1+\eps)x\Big ) 
	-\P \Big (-\sum _{s=0}^{J-1} Q_{s} -\sum _{s=0}^{J} R_{s,2}> \eps x \slash 2\Big) 
	-\P (-R_{J,1}>\eps x\slash 2).
	\end{align*}
	Hence with some $r_{\varepsilon,x}:=\eps ^{-\a -1}o(x^{-\a}\log x)>0$, 
	\begin{multline}
	 \P \Big (\sum _{s=0}^{J-1} R_{s,1}>(1+\eps)x\Big )-r_{\varepsilon,x} \leq \P (\wt Z_{n_1}>x)
	 \leq \P \Big (\sum _{s=0}^{J-1} R_{s,1}>(1-\eps)x\Big)+r_{\varepsilon,x}
	\end{multline}
	and it is enough to prove that 
\begin{align}
\label{lsup}
	\limsup_{x\to\infty} \P \Big (\sum _{s=0}^{J-1} R_{s,1}>(1-\eps)x\Big)x^{\a}(\log x)^{-1} & \leq \a \rho _1^{-1}\mathcal{D} (1-2\eps)^{-\a }, \\
\label{linf}
        \liminf_{x\to\infty} \P \Big (\sum _{s=0}^{J-1} R_{s,1}>(1+\eps)x\Big )x^{\a}(\log x)^{-1} &\geq \a \rho _1^{-1}\mathcal{D} (1+2\eps)^{-\a }. 
\end{align}
Indeed, letting $\eps \to 0$ in \eqref{lsup} and \eqref{linf} we obtain \eqref{Zasym}. 

Choose $0<8\del =\eps <1\slash 3$ and decompose the event $\{\sum_{s=0}^{J-1} R_{s,1}>(1\pm \varepsilon) x\}$ into three ones: 
either all $|R_{s,1}|$ are smaller than $\del x$ or at least two of them are larger than $\del x$ 
or just one is larger than $\del x$. The last event is dominant. 
By Lemmas \ref{two} and \ref{allsmall},
	\begin{align}\label{reminder11}
	\P \Big ( \sum _{s=0}^{J-1}R_{s,1}>(1\pm \eps)x,\quad  \forall s \  |R_{s,1}|\leq \del x\Big) &=o(x^{-\a })\eps ^{-\a},\\
	\label{reminder22}
	\P \Big ( \sum _{s=0}^{J-1} R_{s,1}>(1\pm \eps)x,\ \ \exists r\neq u\ \  |R_{u,1}|>\del x, \ |R_{r,1}|> \del x\Big)& =o(x^{-\a })\eps ^{-\a}.
	\end{align}
Thus, suppose now that there is only one $s$ such that $|R_{s,1}|>\del x$. 
Then either $\sum _{r\neq s} |R_{r,1}|$ is larger than 
$\eps x$ or not. 
The first case is irrelevant  because
	by Lemma \ref{allsmall}, we have 
\begin{equation*}
	\P \Big (  \sum _{r=0}^{J-1} R_{r,1}>(1\pm \eps )x, |R_{s,1}|>\del x, \sum _{r\neq s} |R_{r,1}|> 
\eps x,\forall r\neq s\, |R_{r,1}|\leq \del x \Big)=o(x^{-\a })\eps ^{-\a}.
\end{equation*}
	In the second case $R_{s,1}>0$ and  we are left with disjoint sets $\wt \Omega _{s},\,s=0,...,J-1$:  
	\begin{equation*}
	\wt \Omega _{s,\pm }=\Big \{ \sum _{r=0}^{J-1} R_{r,1}>(1\pm \eps)x, R_{s,1}>\del x, \sum _{r\neq s} |R_{r,1}|\leq \eps x, \forall r\neq s\, |R_{r,1}|\leq \del x\Big \}. 
	\end{equation*}
We further define disjoint sets 
\begin{equation*}
\Omega _{s,\pm }=\Big \{ R_{s,1}>(1\pm 2\eps) x, \sum _{r\neq s} |R_{r,1}|\leq \eps x ,\forall r\neq s\, |R_{r,1}|\leq \del x \Big \},  
\end{equation*}
and then 
\begin{equation*}
\Omega _{s,+}\subset \wt \Omega _{s,\pm }\subset \Omega _{s,-}. 
\end{equation*}
We are going to prove that
	\begin{equation}\label{Omega}
	\lim _{x\to \8} \P (\Omega_{s,\pm })x^{\a}K^{-1}
=\a \mathcal{D}(1\pm 2\eps )^{-\a}
	\end{equation}
	holds uniformly in $s$.
Let us see first that \eqref{Omega} implies \eqref{lsup} and \eqref{linf} and then prove \eqref{Omega}. 
For every $\eta >1$ there is $x_0$ such that 
	\begin{equation*}
	 \P (\Omega_{s,- })x^{\a}K^{-1}\leq \eta \a \mathcal{D}(1- 2\eps )^{-\a}
	\end{equation*}
for $x\geq x_0$ and all $s$. Moreover, increasing possibly $x_0$, we may assume that for $x\geq x_0$, $(\log x)^{-1}\leq \eta \rho ^{-1}_1(JK)^{-1}$ (see \eqref{K}).
So by disjointness of the sets $\wt \Omega_s$, we have
	\begin{align*}
&	\limsup_{x\to\infty} \P \Big (\sum _{s=0}^{J-1} R_{s,1}>(1-\eps)x\Big )x^{\a}(\log x)^{-1} \\
&\leq \limsup_{x\to\infty} \P \Big (\bigcup_{s=0}^{J-1} \wt \Omega_{s,- } \Big ) x^{\a}(\log x)^{-1}\\
        &\leq \limsup_{x\to\infty} \Big(\sum_{s=0}^{J-1}\P (\Omega_{s,- } )x^{\a}K^{-1} \Big)\eta 
	\rho ^{-1}_1J^{-1}\\
	&\leq \eta ^2\a \mathcal{D} (1-2\eps)^{-\a }\rho _1^{-1}.
	\end{align*}
	Now letting $\eta \downarrow 1$ we obtain \eqref{lsup}. We proceed similarly with \eqref{linf}.
	
	Finally, we prove \eqref{Omega}.
	Notice that $\Omega_{s,\pm }\subset \{ R_{s,1 }>(1\pm 2\eps)x\}$ and
	\begin{align*}
	\{ R_{s,1 }>(1\pm 2\eps)x\} \setminus \Omega_{s,\pm }&\subset \bigcup _{r\neq s} \{ R_{s,1 }>(1\pm 2\eps)x, |R_{r,1}|>\del x\} \\
	&\quad \cup \big \{ \sum _{r\neq s} |R_{r,1}|> \eps x ,\ \forall r\neq s\, |R_{r,1}|\leq \del x \big \}.
	\end{align*}
	 Hence in view of Lemmas \ref{two} and \ref{allsmall}  
	\begin{equation}\label{difference}
	\P (\{ R_{s,1 }>(1\pm 2\eps)x\} \setminus \Omega_{s,\pm } )\leq J \delta ^{-\a}o(x^{-\a})
	\end{equation} 
	independently of $s$.
	On the other hand,
	in view of Lemma \ref{block} and definition of $\mathcal{D}$,  
	\begin{equation}
\label{limit:Rs1}
	\lim _{x\to \8}\P \left ( R_{s,1}> (1\pm 2\eps)x\right)x^{\a}K^{-1}=(1\pm 2\eps)^{-\a}\a \mathcal{D}
	\end{equation} 
	uniformly in $s$ 
	and \eqref{Omega} follows.
\end{proof}	
The next lemma gives the precise tail asymptotics of $R_{s,1}, Q_{s,1}$, which respectively yields \eqref{limit:Rs1}
in the proof of Theorem \ref{mainterm} and \eqref{asympt:Qs1} in the proof of Corollary \ref{Qsum}. 
Recall that $K=\lfloor \rho_1\rho_2^{-1}(L/2-1) \rfloor$ and 
$K'=K- \lfloor K^\theta \rfloor$ with $0<\theta<1$.
\begin{lemma}\label{block} 
Under assumptions of Theorem \ref{main} we have
	\begin{equation}\label{blockasym}
	\lim _{x\to \8} \P (|R_{s,1}|>x)x^{\a}K^{-1}=\lim _{x\to \8} \P (|Q_{s,1}|>x)x^{\a}K^{-\theta}=
	c_2c_R\a.
	\end{equation}
If additionally $\P (A_{22}<0)>0$ or $[\,\P (A_{22}>0)=1, \P (A_{11}<0)>0\,]$, then
	\begin{equation}\label{blockasym1}
\lim _{x\to \8} \P (\pm R_{s,1}>x)x^{\a}K^{-1}=c_2c_R \a/2.   
\end{equation}
If additionally $[\,\P (A_{22}>0)=1, \P (A_{11}>0)=1\,]$, then
\begin{equation}\label{blockasym2}
\lim _{x\to \8} \P (\pm R_{s,1}>x)x^{\a}K^{-1}=(c_{2,\pm }c_{R,+}+c_{2,\mp }c_{R,-})\a . 
\end{equation}
These convergences are uniform in $s$. Here as before we set $c_2=c_{2,+}+c_{2,-}$ and $c_R=c_{R,+}+c_{R,-}$.
\end{lemma}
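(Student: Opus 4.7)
The plan is to apply a Breiman-type argument to the decomposition
\[
R_{s,1} = \Pi\cdot M\cdot W,\qquad \Pi:=\Pi^{(1)}_{0,1-sK},\quad M:=M_{sK,sK+K'}\eqd M_{K'},\quad W:=W_{2,-sK-K'}\eqd W_2,
\]
where the three factors are mutually independent and $W$ is regularly varying of index $\alpha$ by \eqref{tail:w_2}. Conditioning on $\Pi M$ and using $y^\alpha\P(\pm W>y)\to c_{2,\pm}$ as $y\to\infty$, the first step is to establish
\begin{equation*}
x^\alpha\P(\pm R_{s,1}>x) = c_{2,\pm}\,\E((\Pi M)^+)^\alpha + c_{2,\mp}\,\E((\Pi M)^-)^\alpha + o(K),
\end{equation*}
uniformly in $0\leq s\leq J-1$; the error reduces to controlling the truncated moment $\E[((\Pi M)^\pm)^\alpha\mathbf{1}_{\{(\Pi M)^\pm>x/T\}}]$.

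By independence of $\Pi$ and $M$ and splitting on their signs,
\begin{equation*}
\E((\Pi M)^\pm)^\alpha = \E(\Pi^+)^\alpha\,\E(M^\pm)^\alpha + \E(\Pi^-)^\alpha\,\E(M^\mp)^\alpha,
\end{equation*}
and combining this with Lemma~\ref{lem2} (which yields $\E|M_{K'}|^\alpha\sim c_R\alpha K$ in general, and $\E(M_{K'}^\pm)^\alpha\sim c_{R,\pm}\alpha K$ when $\P(A_{22}>0)=1$) together with the identity $\E(\Pi^+)^\alpha+\E(\Pi^-)^\alpha=\E|\Pi|^\alpha=1$ (from $\E|A_{11}|^\alpha=1$), the three cases of the lemma fall out. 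If $\P(A_{22}<0)>0$, then $c_{2,+}=c_{2,-}=c_2/2$, so the expression collapses to $(c_2/2)\E|\Pi M|^\alpha = (c_2/2)\E|M_{K'}|^\alpha\sim (c_2/2)c_R\alpha K$. If $\P(A_{22}>0)=1$ and $\P(A_{11}<0)>0$, then Goldie's theorem applied to the SRE $X_t=V_tX_{t-1}+U_t$ under the change of measure $\P_\alpha$ (Lemma~\ref{lem2}) gives $c_{R,+}=c_{R,-}=c_R/2$, and the expression again collapses to $(c_2/2)c_R\alpha K$, proving \eqref{blockasym1}. Finally, if $\P(A_{11}>0)=\P(A_{22}>0)=1$, then $\Pi>0$ a.s.\ so $\E(\Pi^+)^\alpha=1$ and $\E(\Pi^-)^\alpha=0$, yielding the asymmetric limit $(c_{2,\pm}c_{R,+}+c_{2,\mp}c_{R,-})\alpha K$ of \eqref{blockasym2}. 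The unsigned assertion \eqref{blockasym} follows by adding the $\pm$ cases.

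The treatment of $Q_{s,1}$ is verbatim after replacing $M_{K'}$ by $M_{\lfloor K^\theta\rfloor}$ and $\Pi$ by $\Pi^{(1)}_{0,1-sK-K'}$, with the $K^\theta$ scaling coming from $\E|M_{\lfloor K^\theta\rfloor}|^\alpha\sim c_R\alpha K^\theta$. The main anticipated obstacle is the uniformity in $s$ of the Breiman approximation in the first step: because $\E|\Pi|^{\alpha+\eta}=(\E|A_{11}|^{\alpha+\eta})^{sK}$ grows exponentially in $s$ (by convexity of $f(\beta):=\E|A_{11}|^\beta$ with $f(0)=f(\alpha)=1<f(\alpha+\eta)$), the naive Markov bound $\E[((\Pi M)^\pm)^\alpha\mathbf{1}_{\{(\Pi M)^\pm>x/T\}}]\leq (x/T)^{-\eta}\E|\Pi|^{\alpha+\eta}\E|M_{K'}|^{\alpha+\eta}$ produces only $O(T^\eta)$, not $o(K)$. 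To obtain $o(K)$ uniformly across $s\leq J=O(\sqrt{\log x/\log\log x})$ one must instead exploit the sharp Kesten-type product estimates of Lemma~\ref{Products} --- the same ones supplying the logarithmic correction $(\log x)^{-\xi}$ in the proof of Theorem~\ref{main1term} --- applied termwise in the sum defining $M$.
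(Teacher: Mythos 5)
Your proposal is essentially the same approach as the paper's Section \ref{pf:main1}/\ref{subsec:pf:main} proof and is correct: factor $R_{s,1}=\Pi M W$ with the three factors independent, apply Breiman against the regular variation of $W$, reduce to $K^{-1}\E((\Pi M)^\pm)^\alpha$, feed in Lemma~\ref{lem2}, and control the truncated moment uniformly in $s$ via Lemma~\ref{Products}. The one place where you deviate is in how $\E((\Pi M)^\pm)^\alpha$ is reduced to $\E(M_{K'}^\pm)^\alpha$. The paper's Step~3 proves, in the case $\P(A_{22}>0)=1,\ \P(A_{11}<0)>0$, that $K^{-1}|\E((\Pi M)^\pm)^\alpha-\E(M_{K'}^\pm)^\alpha|\to 0$ by expanding with sign indicators and invoking the cancellation $\E(M_{K'}^+)^\alpha-\E(M_{K'}^-)^\alpha=o(K)$ (i.e.\ $c_{R,+}=c_{R,-}$); in the other cases the paper treats $\Pi^+$, $\Pi^-$ or $|\Pi|$ directly. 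You instead write the exact identity $\E((\Pi M)^\pm)^\alpha=\E(\Pi^+)^\alpha\E(M^\pm)^\alpha+\E(\Pi^-)^\alpha\E(M^\mp)^\alpha$ (valid because $(\Pi M)^+=\Pi^+M^++\Pi^-M^-$ with disjoint supports and $\Pi\perp M$) and plug $\E(\Pi^+)^\alpha+\E(\Pi^-)^\alpha=\E|\Pi|^\alpha=1$ together with Lemma~\ref{lem2}. This is cleaner and handles all three sign cases in one computation, buying a more transparent derivation of the three constants. Both reductions lean on the same two facts: $c_{R,+}=c_{R,-}$ when $\P(V<0)>0$, and $\Pi^-\equiv 0$ when $A_{11}>0$.

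One small imprecision in the last paragraph: the naive Markov bound with a fixed $\eta$ does not produce $O(T^\eta)$. Because $\E|\Pi|^{\alpha+\eta}=\bigl(\E|A_{11}|^{\alpha+\eta}\bigr)^{sK}\ge e^{(\rho_1\eta+C\eta^2)sK}$ and $sK$ can be of order $\rho_1^{-1}\log x$, the bound picks up a factor $\approx T^\eta x^{C\eta^2/\rho_1}$, which diverges polynomially in $x$ for fixed $\eta$ — strictly worse than the $o(K)=o(\log x)$ you need. (If it were merely $O(T^\eta)$, a constant, that would already be $o(K)$.) This is precisely why the $x$-dependent optimization $\eps=\eps_x$ in Lemma~\ref{Products} is indispensable, which is the conclusion you correctly reach; the intermediate quantitative claim just misstates the rate.
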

\begin{proof}
Since $R_{s,1}$ and $Q_{s,1}$ are the same in the structure and differ only in the number of terms, we consider $R_{s,1}$ only and 
omit the proof for $Q_{s,1}$. 
First we notice that \eqref{blockasym} is implied by \eqref{blockasym1} and \eqref{blockasym2} and we prove the latter two. 
Since the expression \eqref{eq:def:Rs1} of $R_{s,1}$ is lengthy for convenience we write 
\[
 \Pi ^{(1)} _{0,1-sK} = \Pi_{1-sK},\,M_{sK,sK+K'} = M_{K'},\, W_{2,-sK-K'} =W_2\ \text{so that}\ R_{s,1}=\Pi_{1-sK} M_K' W_2. 
\]
This makes sense since $M_{sK,sK+K'} \stackrel{d}{=} M_{0,K'}:=M_{K'}$,  $W_{2,-sK-K'} \stackrel{d}{=} W_2$ (by stationarity), $\Pi _{1-sK},\ M_{K'}$ and $W_2$ are 
 mutually independent 
and $\E|\Pi_{1-sK}|^\a=1$. 
We are going to use regular variation of $W_2$ and define 
\begin{align*}
P_+=&\P \big (W_2>x((\Pi _{1-sK}M_{K'})^+)^{-1}\big )x^{\a} ((\Pi _{1-sK}M_{K'})^{+} )^{-\a},\\
P_-=&\P \big (-W_2>x((\Pi _{1-sK}M_{K'})^-)^{-1}\big )x^{\a} ((\Pi _{1-sK}M_{K'})^{-})^{-\a} 
\end{align*}
with convention that $P_{\pm}=0$ on the set $\{ \Pi_{1-sK} M_{K'}=0\}$. 
Hence
\begin{align}
\label{approx:Rs1}
\P (R_{s,1}>x)x^{\a}K^{-1}&=\E P_+ ((\Pi _{1-sK}M_{K'})^+)^\a K^{-1}+\E P_- ((\Pi _{1-sK}M_{K'})^-)^\a K^{-1} \\
&=: \ov I_++ \ov I_-. \nonumber
\end{align}
Observe that for every $\eps >0$ there is $T>0$ such that 
\begin{equation}\label{regularP}
|P_{\pm}-c_{2,\pm}|<\eps\quad \text{if}\quad x((\Pi _{1-sK}M_{K'})^{\pm}))^{-1}>T, 
\end{equation}
where $c_{2,\pm}$ are as in \eqref{tail:w_2}.
Hence one may expect that 
$\P (R_{s,1}>x)x^{\a}K^{-1}$ is approximated by
 $c_{2,\pm}\E ( (\Pi _{1-sK}M_{K'})^{\pm} )^\a K^{-1}$, as $x\to \8 $. 
We will make this intuition precise.  
\vspace{2mm}\\
{\bf Step 1.} 
We utilize the following inequalities, which depend on signs of $A_{11}$ and $A_{22}$. 
Suppose that $\P (A_{22}<0)>0$. Then $c_{2,\pm}=c_2\slash 2$ and  
 \begin{align}
&  |\P (R_{s,1}>x)x^{\a}K^{-1}-c_2(2K)^{-1}\E |M_{K'}|^{\a } | \nonumber \\
& = |\P (R_{s,1}>x)x^{\a}K^{-1}-c_2(2K)^{-1}\E |\Pi_{1-sK} M_{K'}|^{\a } | 
\label{block:ineq:1}   \\
&\leq | \ov I_+ -c_2(2K)^{-1}\E ((\Pi_{1-sK} M_{K'})^{+})^\a  |+
  | \ov I_- -c_2(2K)^{-1}\E ((\Pi_{1-sK} M_{K'})^{-})^\a  |. \nonumber 
\end{align}
If $\P (A_{22}>0)=1$ then we write
    \begin{align}
\label{block:ineq:2}    
   &|\P (R_{s,1}>x)x^{\a}K^{-1}-c_{2,+}K^{-1}\E (M_{K'}^{+})^\a  - c_{2,-}K^{-1}\E (M_{K'}^{-})^\a |\\
    &\leq |\ov I_+-c_{2,+}K^{-1}\E (M_{K'}^{+})^\a  |+
    |\ov I_- -c_{2,-}K^{-1}\E (M_{K'}^{-})^\a  |. \nonumber
\end{align}
If additionally $\P (A_{11}>0)=1$ then in \eqref{block:ineq:2} we have 
\begin{align}
\label{block:ineq:3}
  |\ov I_\pm-c_{2,\pm}K^{-1}\E (M_{K'}^\pm)^\a  |
  &=|\ov I_\pm -c_{2,\pm}K^{-1}\E ((\Pi_{1-sK} M_{K'})^\pm)^\a |.  
\end{align}
If $\P (A_{11}<0)>0$ then we write
\begin{align}\label{block:ineq:4}
|\ov I_\pm - c_{2,\pm}K^{-1}\E (M_{K'}^\pm)^\a|  
&\leq | \ov I_\pm -c_{2,\pm}K^{-1}\E ((\Pi_{1-sK} M_{K'})^\pm)^\a |\\
&+ c_{2,\pm}K^{-1}|\E ((\Pi_{1-sK} M_{K'})^\pm)^\a-\E (M_{K'}^\pm)^\a|.\nonumber
\end{align}
In {\bf Step 3} we will prove that 
\begin{equation}\label{block:ineq:6}
\lim _{x\to \8} K^{-1}|\E ((\Pi_{1-sK} M_{K'})^\pm)^\a-\E (M_{K'}^\pm)^\a|=0,
\end{equation}
provided $\P (A_{22}>0)=1, \P (A_{11}<0)>0$.
Then \eqref{block:ineq:1}, \eqref{block:ineq:3} and \eqref{block:ineq:4} may be treated in the same way because $c_{2,\pm}=c_2/2$ in \eqref{block:ineq:1}. What we need is
\begin{equation}\label{block:ineq:5}
\lim _{x\to \8} | \ov I_\pm -c_{2,\pm}K^{-1}\E ((\Pi_{1-sK} M_{K'})^\pm)^\a |=0
\end{equation}
and it will be proved in {\bf Step 4}. 
\vspace{2mm}\\
{\bf Step 2.}
Observe that $K\sim K'$ and in view of Lemma \ref{lem2}
\begin{equation}\label{equality0} 
\lim _{x\to \8}K^{-1}\E |M_{K'}|^{\a }=c_R\a \end{equation}
and if additionally $\P (A_{22}>0)=1$ then 
\begin{equation}
\label{equality1}
\lim _{x\to \8}  K^{-1} \E (M_{K'}^{\pm})^{\a }=c_{R,\pm}\a,  
\end{equation}
where $c_{R,\pm}=c_R/2$ holds when $\P (A_{22}<0)>0$. 
Now \eqref{blockasym1} for $\P (A_{22}<0)>0$ follows from \eqref{block:ineq:1}, \eqref{block:ineq:5} and \eqref{equality0}. 
If $[\,\P (A_{22}>0)=1,\,\P (A_{11}>0)=1\,]$ then, similarly, by \eqref{block:ineq:2}, \eqref{block:ineq:3}, 
\eqref{block:ineq:5} and \eqref{equality1}, 
\begin{equation*}
\lim _{x\to \8}\P (R_{s,1}>x)x^{\a}K^{-1}= (c_{2,+}c_{R,+} + c_{2,-} c_{R,-})\a,
\end{equation*}
so that \eqref{blockasym2} follows. Finally 
If $[\,\P (A_{22}>0)=1,\,\P (A_{11}<0)>0\,]$ then 
by \eqref{block:ineq:2}, \eqref{block:ineq:4}-\eqref{block:ineq:5} and \eqref{equality1} with $c_{R,\pm}=c_{R}/2$, 
the right hand side becomes
\begin{equation*}
(c_{2,+}c_{R,+} + c_{2,-} c_{R,-})\a=c_2c_R\a \slash 2, 
\end{equation*}
which is \eqref{blockasym1} under the second condition.
The proof for $-R_{s,1}$ is similar and so it is omitted. 
It suffices to change the signs of $(\Pi_{1-sK}M_{K'})$ in both $P_+$ and $P_-$ and proceed as before. 
\vspace{2mm}\\
{\bf Step 3.} 
First we show \eqref{block:ineq:6}. 
Observe that 
      \begin{align*}
      &K^{-1}\E ( M_{K'}^+)^\a =K^{-1}\E |\Pi_{1-sK}|^{\a} (M_{K'}^+)^\a \\
      &=K^{-1}\E (\Pi_{1-sK}^+)^\a (M_{K'}^+)^{\a }+K^{-1}\E (\Pi_{1-sK}^-)^\a (M_{K'}^-)^{\a }\\
      &\quad +K^{-1}\E (\Pi_{1-sK}^-)^\a (M_{K'}^+)^{\a }-K^{-1}\E (\Pi_{1-sK}^-)^\a (M_{K'}^-)^{\a }\\
      &=K^{-1}\E ((\Pi_{1-sK}M_{K'})^+)^\a \\
      &\quad +K^{-1}\E (\Pi_{1-sK}^-)^\a (M_{K'}^+)^{\a }-K^{-1}\E (\Pi_{1-sK}^-)^\a (M_{K'}^-)^{\a }.
      \end{align*}
      Since 
      \begin{equation*}
      \lim _{K\to \8}K^{-1}\E  (M_{K'}^{\pm})^{\a }=c_R\slash 2, 
      \end{equation*}
       the last row tends to $0$. Indeed, 
   \begin{align*}
   \lim _{x\to \8} K^{-1} \big| \E (\Pi_{1-sK}^-)^\a \big|  |\E (M_{K'}^+)^{\a }-\E (M_{K'}^-)^{\a } |
   \leq \lim _{x\to \8} |K^{-1}\E (M_{K'}^+)^{\a }-K^{-1}\E (M_{K'}^-)^{\a } |=0.
   \end{align*}
The same holds with $K^{-1}\E (M_{K'}^-)^\alpha$ and its approximation $K^{-1} \E((\Pi_{1-sK}M_{K'})^-)^\a$.  
\vspace{2mm}\\
{\bf Step 4.} Due to the regular variation of $W_2$
$$\sup _{u>0}P(\pm W_2>u)u^{\a}<\8.$$
Hence $P_+, P_-$ are bounded independently of $(\Pi_{1-sK} M_{K'})^\pm$. Therefore, for \eqref{block:ineq:5} we use \eqref{regularP} and so we may write 
  \begin{align*}
  & |\ov I_\pm -c_{2,\pm}K^{-1}\E ((\Pi_{1-sK} M_{K'})^\pm)^{\a } | \\
  &\leq K^{-1}\E |P_\pm- c_{2,\pm}K^{-1}| ((\Pi_{1-sK} M_{K'})^\pm)^\a \\
  &\leq \eps K^{-1} \E ((\Pi_{1-sK} M_{K'})^\pm)^\a \Ind {\{ (\Pi_{1-sK} M_{K'})^\pm < xT^{-1}\}} \\
  & \qquad +  C K^{-1} \E ((\Pi_{1-sK} M_{K'})^\pm)^\a \Ind {\{(\Pi_{1-sK} M_{K'})^\pm \geq xT^{-1}\}}\\
  &\leq \eps K^{-1} \E | M_{K'}|^{\a }+C K^{-1} \E |\Pi_{1-sK} M_{K'}|^{\a } \Ind {\{|\Pi_{1-sK}M_{K'}| \geq xT^{-1}\}}
  \end{align*}
  and $\eps $ is independent of $s$. 
  
  Hence in view of \eqref{limit1} it suffices to prove that 
  \begin{equation}\label{tailconst}
  K^{-1}\E \left | \Pi _{1-sK}M_{K'}\right |^{\a}\Ind {\{|\Pi _{1-sK}M_{K'}|\geq xT^{-1}\}}\to 0
  \end{equation}
as $x\to\infty$ uniformly in $s$. 
 We need to estimate $\P(|\Pi_{1-sK}M_{K'}|>xT^{-1} e^m)$. 
 Recall from \eqref{def:M} that $\Pi _{1-sK} M_{K'}=\Pi^{(1)}_{0,1-sK}M_{sK,sK+K'}$ is the sum of terms
 $$
 I_{i}=
\bigg \{
\begin{array}{ll}
\Pi ^{(1)}_{0,2-i}A_{12,1-i}\Pi ^{(2)}_{-i,1-sK-K'} & \text{for}\quad sK+1 \le i \le sK+K'\ \&\ s\le J-1 \\
\Pi ^{(1)}_{0,2-i}A_{12,1-i}\Pi ^{(2)}_{-i,1-n_1}  & \text{for}\quad JK+1 \le i\le n_1 \ \&\ s=J. 
\end{array}
 $$
To estimate $I_i$ we use Lemma \ref{Products}
with $I_i$ playing the role of  
$I_{i,k}$ and  
 $$
 k=
\left \{
\begin{array}{ll}
sK+K'-i \le K & \text{for}\quad sK+1 \le i \le sK+K'\ \&\ s\le J-1 \\
n_1-i \le K & \text{for}\quad JK+1 \le i\le n_1 \ \&\ s=J. 
\end{array}
\right.
 $$
Notice that since $K=\lfloor \rho_1\rho_2^{-1} (L/2-1)\rfloor$, we have 
\[
 \rho_1 i +\rho_2 k = \rho_1 n_1 + \rho_2 K \le \rho_1 n_0 -\rho_1 L +\rho_2K \le \rho_1 (n_0-L/2). 
\]
Thus, with parameters $\wt L= L/2$, $\wt{\ov D}= \ov D/2$, $\wt n_1=\lfloor n_0-\wt L \rfloor $,  
\[
 \wt \xi  =\frac{\rho _1^3 \ov D^2}{16C_0}\quad \text{and}\quad \wt \eps_x = \frac{\rho \wt L}{2C_0(n_0-\wt L)} \ge \frac{\rho_1^2}{2C_0 (\log x)^{1/2}}, 
\]
we may apply Lemma \ref{Products} (the first part of \eqref{Product1}). 
Hence 
 \begin{equation*}
 \P \left (|I_{i}|> x(TK)^{-1}e^m \right )\leq  C(\log x)^{-\wt \xi}x^{-\a}(TK)^{\a +\wt \eps_x}e^{-(\a+\wt \eps_x) m}. 
 \end{equation*}
Then 
\begin{align*}
 \P \big (| \Pi _{1-sK}M_{K'}|> xT^{-1}e^m \big )
 &\leq \sum _{i=1}^{K'} \P (| I_i|> xT^{-1}e^m {K'}^{-1}) \\
 &\leq  CK^{\a +1+\wt \eps_x }(\log x)^{-\wt \xi}x^{-\a}T^{\a + \wt \eps_x}e^{-(\a+ \wt \eps_x) m}
\end{align*}
 and 
 \begin{align*}
 \E  | \Pi _{1-sK}M_{K'} |^{\a}\Ind {\{|\Pi _{1-sK}M_{K'}|\geq xT^{-1}\}}& \le 
  \sum _{m\geq 0}\E | \Pi _{1-sK}M_{K'} |^{\a}\Ind {\{xT^{-1}e^m\leq |\Pi _{1-sK}M_{K'}|< xT^{-1}e^{m+1}\}}\\
 & \leq 
 C(\log x)^{-\wt \xi}K^{\a +2}T \sum _{m\geq 0}e^{-\wt \eps_x m}\\
 &\le C K^{\a+2} \wt \eps_x^{-1} (\log x)^{-\wt \xi}  \\
 &\leq C(\log x)^{-\wt \xi +\a \slash 2 +2}=o(1), 
 \end{align*}
in view of \eqref{ovD} and the conclusion follows. In the same way we prove the statement for $Q_{s,1}$. 
\end{proof}

\subsection{Negligible parts for Theorems  \ref{main1} and \ref{main}}
\label{sec:neg}
In this section, we study the negligible partial sums $\wt Z^{n_2},\,\wt Z ^{n_1,n_2}$ of decomposition \eqref{split} 
as well as $\wt Z_{n_1,2}$ of \eqref{def:zn12} 
(Lemma \ref{inftypart} for $\wt Z^{n_2}$, Lemma \ref{middlepart} for $\wt Z ^{n_1,n_2}$ and Corollary \ref{smaller} for $\wt Z_{n_1,2}$). 
At this point we do not distinguish between the cases $\P (A_{11}=A_{22})<1$ and $\P (A_{11}=A_{22})=1$. 
The main tool is Lemma \ref{Products} where we derive 
the behavior of the products $\Pi ^{(1)}_{0, 2-i}A_{12,1-i} \Pi ^{(2)}_{-i,1-i-k}$ 
which appear in all the negligible partial sums. This allows us to decide 
which products $\Pi ^{(1)}_{0, 2-i}A_{12,1-i} \Pi ^{(2)}_{-i,1-i-k}$ are too ``short'' or too ``long'' to play the role in the asymptotics.
\begin{lemma}
	\label{lem:lambdaii} 
Suppose $[{\rm A1}]$ and $\E|A_{ii}|^{\a+\eta}<\infty,\,i=1,2$ for some $\eta>0$. 
Given $0\leq \eps _0 <\eta$ there is $C_0>0$ such that for every $0\leq \eps \leq \eps _0$ and $i=1,2$
	\begin{equation}\label{exp} 
	\E |A_{ii}|^{\alpha \pm \eps }\leq e^{\pm \eps \rho _i+C_0\eps ^2}.
	\end{equation}
\end{lemma}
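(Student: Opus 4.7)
The plan is to set $g_i(\epsilon) = \log \E |A_{ii}|^{\alpha+\epsilon}$ and obtain the bound from a second-order Taylor expansion of $g_i$ around $0$. The assumptions ${\rm [A1]}$ and $\E|A_{ii}|^{\alpha+\eta}<\infty$ will give, respectively, the base value $g_i(0)=0$ and enough regularity to control $g_i$ on a neighborhood of $0$.

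First I would record the smoothness of $g_i$. By log-convexity of $\beta \mapsto \E|A_{ii}|^{\beta}$ on the interval where it is finite, the finiteness of $\E|A_{ii}|^{\alpha+\eta}$ together with $\E|A_{ii}|^{0}=1$ implies $f_i(\beta):=\E|A_{ii}|^{\beta}<\infty$ for all $\beta\in[0,\alpha+\eta]$, and standard domination arguments allow differentiation under the integral in the interior, so $f_i\in C^2(0,\alpha+\eta)$. In particular $g_i$ is $C^2$ on $(-\alpha,\eta)$, with
\[
 g_i'(\epsilon)=\frac{\E|A_{ii}|^{\alpha+\epsilon}\log|A_{ii}|}{\E|A_{ii}|^{\alpha+\epsilon}}, \qquad g_i''(\epsilon)=\V_\epsilon(\log|A_{ii}|),
\]
where $\V_\epsilon$ denotes the variance under the tilted probability measure with density proportional to $|A_{ii}|^{\alpha+\epsilon}$. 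Using $\E|A_{ii}|^{\alpha}=1$ from ${\rm [A1]}$, this gives $g_i(0)=0$ and $g_i'(0)=\rho_i$.

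Next I would apply Taylor's formula with remainder:
\[
 g_i(\epsilon) \;=\; g_i(0)+g_i'(0)\epsilon + \int_0^{\epsilon}(\epsilon-s)\,g_i''(s)\,ds \;=\; \rho_i\,\epsilon + \int_0^{\epsilon}(\epsilon-s)\,g_i''(s)\,ds,
\]
and replace $\epsilon$ by $\pm\epsilon$. For $0\le \epsilon\le \epsilon_0<\eta$ (and, to handle the minus sign, $\epsilon_0<\alpha$ as well, which can be arranged by shrinking $\epsilon_0$), the map $\epsilon \mapsto g_i''(\epsilon)$ is continuous on the compact interval $[-\epsilon_0,\epsilon_0]\subset(-\alpha,\eta)$, hence bounded there by some constant $2C_0$ depending only on $\eta$, $\epsilon_0$ and the law of $A_{ii}$. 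Since $|g_i''(s)|\le 2C_0$ uniformly on $[-\epsilon_0,\epsilon_0]$, the remainder is at most $C_0\epsilon^2$, yielding
\[
 g_i(\pm\epsilon) \;\le\; \pm\rho_i\,\epsilon + C_0\,\epsilon^2,
\]
which is exactly \eqref{exp} after exponentiation. The constant $C_0$ can be taken the same for $i=1,2$ by taking the maximum of the two bounds.

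The only mildly delicate point is ensuring that $C_0$ is genuinely uniform in $\epsilon\in[0,\epsilon_0]$ rather than blowing up as $\epsilon\to\epsilon_0$; this is handled by picking $\epsilon_0$ strictly less than $\eta$ so that $[-\epsilon_0,\epsilon_0]$ stays in the open domain of analyticity of $g_i$, where $g_i''$ is continuous and hence bounded on the closed subinterval. Everything else is routine: justifying differentiation under the expectation requires only dominating $|A_{ii}|^{\alpha+\epsilon}\log|A_{ii}|$ and $|A_{ii}|^{\alpha+\epsilon}(\log|A_{ii}|)^2$ by integrable functions, which follows from $\E|A_{ii}|^{\alpha+\eta}<\infty$ combined with the elementary estimate $(\log^+|A_{ii}|)^k|A_{ii}|^{\alpha+\epsilon}\le C_k|A_{ii}|^{\alpha+\eta}$ for $\epsilon<\eta$.
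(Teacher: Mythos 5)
Your proof is correct and follows essentially the same route as the paper: both set $g(\eps)=\log\E|A_{ii}|^{\alpha+\eps}$ (the paper writes $\log\lambda(\beta)$), verify $C^2$-regularity on a neighborhood of $0$ by dominated convergence, bound the second derivative uniformly on a compact subinterval, and conclude by a second-order Taylor expansion using $g(0)=0$ and $g'(0)=\rho_i$. Your explicit remark that $\eps_0$ should also be taken strictly less than $\alpha$ (so that $\alpha-\eps$ stays positive and the negative moment does not blow up) is a small but genuine point of care that the paper leaves implicit.
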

	\begin{proof} Since the proof does not depend on $i$, the index $i$ is omitted.
Let $\lambda(\beta)=\E |A|^{\beta}$. 
Notice that on $0<\beta <\alpha +\eta $ 
	\begin{align*}
	\lambda'(\beta)=\E|A|^{\beta} \log |A|\qquad \text{and}\qquad 
	\lambda''(\beta)=\E|A|^{\beta} (\log |A|)^2 
	\end{align*}
	are well defined and continuous.
	Indeed, for $\beta<\alpha+\eta$ 
        there is $M>0$ such that 
	$$|x|^\beta
	(\log |x|)^2 \le |x|^{\alpha+\eta} \vee M $$ 
        and we may apply the dominated convergence theorem. 
We have
	\begin{align*}
(\log \lambda(\beta))'=\frac{\lambda'(\beta)}{\lambda (\beta)}\qquad \text{and} \qquad 
(\log \lambda(\beta))''=\frac{\lambda''(\beta)\lambda(\beta)-\lambda'(\beta)^2}{\lambda (\beta)^2},
	\end{align*}
which are both well defined and continuous on $\beta <\alpha+\eta$ as well. Thus, 
	there exists $C_0$ such that 
	\begin{align}
	\label{secondd-lambda}
	\sup_{|\beta-\alpha|\leq \varepsilon _0 } 
(\log \lambda(\beta))'' \leq 2C_0<\infty.
	\end{align}
	Now Taylor series expansion together with $\lambda(\alpha)=1$ yields  
	$$
\log \lambda(\a \pm \eps) \leq \pm \eps \lambda'(\a)+C_0\eps ^2
$$ 
	and \eqref{exp} follows. 
\end{proof}
Recall that $n_1,n_2$ and $\ov D$ are as in \eqref{def:notationspf}, \eqref{ovD} and
$\rho _1, \rho _2$ as in \eqref{rho}. 
\begin{lemma}\label{Products}
	Suppose $[A1], [A2]$. For $i+k \leq n_1$ define 
	$$
	I_{i,k}=\big |\Pi ^{(1)}_{0, 2-i}A_{12,1-i} \Pi ^{(2)}_{-i,1-i-k}\big | \quad \text{or} \quad
	\big |\Pi ^{(1)}_{0, 2-i}A_{12,1-i} \Pi ^{(2)}_{-i,1-i-k}B_{2,-i-k}\big |.$$
	Then there is a constant $C>0$ such that for every $T>0$ and $x>1$
	\begin{equation}\label{Product1}
	\P (|I_{i,k}|>xT)\leq \left \{
\begin{array}{ll}
C(\log x)^{-\xi}x ^{-\alpha}T^{-\alpha -\eps_x } & \text{if}\ \rho _1i+\rho _2k\leq n_1\rho _1 \\
C(\log x)^{-\xi }x ^{-\alpha}T^{-\alpha +\eps_x } & \text{if}\ \rho _1i+\rho _2k\geq n_2\rho _1
\end{array}, 
\right. 
\end{equation} 
	where 
	\begin{equation}\label{xi}
	\xi = \frac{\rho ^3 _1 \ov D^2}{4C_0}\quad \mbox{and}\quad \frac{\rho _1^2}{2C_0\sqrt{\log x}}\leq \eps_x \leq 1.
	\end{equation}
\end{lemma}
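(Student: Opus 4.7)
The plan is to combine Markov's inequality at a suitable exponent $\alpha\pm\varepsilon$ with Lemma \ref{lem:lambdaii} and the independence of the factors that make up $I_{i,k}$. In Case 1 ($\rho_1 i+\rho_2 k\le n_1\rho_1$) I would apply Markov with exponent $\alpha+\varepsilon$, which naturally produces the factor $T^{-\alpha-\varepsilon_x}$; in Case 2 ($\rho_1 i+\rho_2 k\ge n_2\rho_1$) with exponent $\alpha-\varepsilon$, producing $T^{-\alpha+\varepsilon_x}$. Because the sequence $\bfA_t$ is i.i.d.\ in $t$, the moment factorises as
\[
\E|I_{i,k}|^{\alpha\pm\varepsilon} \le C_1\,(\E|A_{11}|^{\alpha\pm\varepsilon})^{i-1}(\E|A_{22}|^{\alpha\pm\varepsilon})^{k},
\]
where $C_1$ absorbs $\E|A_{12}|^{\alpha\pm\varepsilon}$ and, when $B_2$ is present, $\E|B_2|^{\alpha\pm\varepsilon}$; both are finite uniformly for small $\varepsilon$ by ${\rm [A2]}$ and continuity of the moment functions.

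Lemma \ref{lem:lambdaii} then gives $\E|A_{ii}|^{\alpha\pm\varepsilon}\le e^{\pm\varepsilon\rho_i+C_0\varepsilon^2}$, which yields
\[
\P(|I_{i,k}|>xT) \le C_1\exp\bigl(\pm\varepsilon(\rho_1(i-1)+\rho_2 k) + C_0(i+k)\varepsilon^2\bigr)(xT)^{-\alpha\mp\varepsilon}.
\]
Taking logarithms and using $n_1\rho_1 = \log x - L\rho_1 + O(1)$, in Case 1 the bracket $\varepsilon[\rho_1(i-1)+\rho_2 k - \log x]$ is at most $-\varepsilon L\rho_1 + O(\varepsilon)$; in Case 2 the bracket $\varepsilon[\log x - \rho_1(i-1) - \rho_2 k]$ is bounded by $-\varepsilon L\rho_1$ via $\rho_1 i + \rho_2 k\ge n_2\rho_1 = \log x + L\rho_1 + O(1)$. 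Since $i+k\le n_1\le \rho_1^{-1}\log x$, the quadratic term is bounded by $C_0\rho_1^{-1}(\log x)\varepsilon^2$. The claim thus reduces to finding $\varepsilon=\varepsilon_x\in[\rho_1^2/(2C_0\sqrt{\log x}),1]$ satisfying
\[
\varepsilon L\rho_1 - C_0\rho_1^{-1}(\log x)\varepsilon^2 \ge \xi\log\log x.
\]

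I would close the argument by taking $\varepsilon_x = \ov D\rho_1^2(2C_0)^{-1}\sqrt{\log\log x/\log x}$. With this choice and $L=\lfloor \ov D\sqrt{(\log\log x)\log x}\rfloor$, the two terms on the left equal $\ov D^2\rho_1^3(2C_0)^{-1}\log\log x$ and $\ov D^2\rho_1^3(4C_0)^{-1}\log\log x$ respectively, whose difference is precisely $\xi\log\log x$. Admissibility of $\varepsilon_x$ follows from $\ov D\sqrt{\log\log x}\ge 1$ for $x$ large and $\sqrt{(\log\log x)/\log x}\to 0$; assumption \eqref{ovD} provides the quantitative slack to absorb the $O(1)$ remainders and the uniform prefactor $C_1$ into an overall constant $C$.

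The main obstacle, in my view, is not conceptual but the delicate bookkeeping: the optimisation of the quadratic in $\varepsilon$ delivers \emph{exactly} the promised constant $\xi=\ov D^2\rho_1^3/(4C_0)$, with no room to spare at the optimum, so one has to verify carefully that the $O(1)$ rounding in $n_0,n_1,n_2$, the $O(\varepsilon_x)=O(\sqrt{\log\log x/\log x})$ error in the linear term, and the prefactors coming from $A_{12}$ and $B_2$ are genuinely of lower order than $\xi\log\log x$ and hence absorbable into the constant $C$ uniformly in $i,k,T,x$ without perturbing the exponent $\xi$.
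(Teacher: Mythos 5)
Your proposal is correct and follows essentially the same route as the paper: Markov's inequality at exponent $\alpha\pm\varepsilon$, the bound $\E|A_{ii}|^{\alpha\pm\varepsilon}\le e^{\pm\varepsilon\rho_i+C_0\varepsilon^2}$ from Lemma \ref{lem:lambdaii}, and optimization over $\varepsilon$; the paper picks the exact minimizer $\varepsilon^*=\rho_1 L/(2C_0(n_0-L))$ and then lower-bounds the optimal value, whereas you first bound $n_0-L\le\rho_1^{-1}\log x$ and then optimize, but both land on $\varepsilon_x\asymp\ov D\rho_1^2(2C_0)^{-1}\sqrt{\log\log x/\log x}$ and the identical constant $\xi$. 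Your worry that the optimum is achieved "with no room to spare" is legitimate but harmless: the corrections you list (floors in $n_0,n_1,n_2$, the $A_{12}$ and $B_2$ prefactors, the $O(\varepsilon_x)$ error) produce only an $O(1)$ or $e^{o(1)}$ multiplicative factor, which is exactly what the generic constant $C$ in the statement absorbs, and the paper's own write-up implicitly relies on the same absorption.
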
 
\begin{proof}
We start with the first part of \eqref{Product1}. Applying Markov inequality, we have
$$
\P (|I_{i,k}|>xT)\leq (\E |A_{11}|^{\a +\eps})^{i-1}\E |A_{12}|^{\a + \eps} (\E
|A_{22}|^{\a +\eps})^{k} (1 \vee \E |B_{2}|^{\a + \eps}) x^{-(\a +\eps) }T^{-(\a +\eps)}.$$
We further observe by Lemma \ref{lem:lambdaii} that 
\begin{align*}
\P (|I_{i,k}|>xT)
\leq & C e^{(\rho _1\eps +C_0\eps ^2)(i-1)}e^{(\rho _2\eps +C_0\eps ^2)k} x^{-(\a +\eps) }T^{-(\a +\eps)}\\
\leq & C e^{(\rho _1\eps +C_0\eps
	^2)(n_0-L)}x^{-(\a +\eps) }T^{-(\a +\eps)}\\
\le & Ce^{-\rho _1\eps L+C_0\eps ^2(n_0-L)}x^{-\a  }T^{-(\a +\eps)} , 
\end{align*}
because $e^{\rho _1\eps n_0} \le x^{\eps}$ 
by $n_0=\lfloor \rho_1^{-1}\log x \rfloor$. Finally, we minimize 
$-\rho _1\eps L+C_0\eps ^2(n_0-L)$ over $\eps$ on $\eps  \in(0, \eps_0 \wedge 1) $. 
The minimum is taken at 
\begin{align}
\label{def:epsmin}
\eps =\frac{\rho _1L}{2C_0(n_0-L)} < \eps _0 \wedge 1   
\end{align}
when $x$ is sufficiently large, and its value is
$$
-\frac{(\rho _1L)^2}{4C_0(n_0-L)}\leq -\xi \log \log x. $$
Indeed, from definitions of $n_0$ and $L$, it is not difficult to observe that
\begin{align*}
\frac{(\rho _1L)^2}{4C_0(n_0-L)} 
&\geq \frac{\rho ^3 _1 \ov D^2(\log \log x)\log x}{4C_0\log x}=\frac{\rho ^3 _1 \ov D^2}{4C_0} \log \log x, 
\end{align*}
and the first part follows.

For the second part of \eqref{Product1}, again by Markov inequality we write
$$
\P (|I_{i,k}|>xT)\leq (\E |A_{11}|^{\a -\eps})^{i-1}\E |A_{12}|^{\a - \eps} (\E
|A_{22}|^{\a -\eps})^{k} (1 \vee \E |B_{2}|^{\a - \eps}) x^{-(\a -\eps) }T^{-(\a -\eps)},$$
and we observe by Lemma \ref{lem:lambdaii} that 
\begin{align*}
\P (|I_{i,k}|>xT)
\leq & C e^{(-\rho _1\eps +C_0\eps ^2)(i-1)}e^{(-\rho _2\eps +C_0\eps ^2)k} x^{-(\a -\eps) }T^{-(\a -\eps)}\\
\leq & C e^{-n_2\rho _1\eps +C_0\eps^2(i+k)}x^{-\a +\eps }T^{-\a +\eps}\\
\leq & Ce^{-n_0\rho _1\eps}e^{-\rho _1\eps L+C_0\eps ^2(n_0-L)}x^{-\a +\eps }T^{-\a +\eps }\\
\leq & Ce^{-\rho _1\eps L+C_0\eps ^2(n_0-L)}x^{-\a  }T^{-\a +\eps }.
\end{align*}
Now minimizing the right hand side over $\eps $ as before, we reach the second part.  
\end{proof}
\begin{corollary}\label{smaller} 
 Assume $[{\rm A1}]$, $[{\rm A2}]$. Let $\wt Z_{n_1,2}$ be as in \eqref{def:zn12} and $\xi $ as in \eqref{xi}. 
	If $\rho _2\leq \rho _1$, then 
	\begin{equation*}
	\P (|\wt Z_{n_1,2}|>x)\leq Cx^{-\a}(\log x)^{-\xi +2\a +4}.
	\end{equation*}
\end{corollary}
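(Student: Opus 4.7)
The plan is to apply a straightforward union bound to the double sum defining $\wt Z_{n_1,2}$ and estimate each summand with the first part of Lemma \ref{Products}. Recall from \eqref{def:zn12} that
\[
 \wt Z_{n_1,2}=\sum_{i=1}^{n_1}\sum_{k=0}^{n_1-i-1}\Pi^{(1)}_{0,2-i} A_{12,1-i}\Pi^{(2)}_{-i,1-i-k} B_{2,-i-k},
\]
so setting $J_{i,k}=\Pi^{(1)}_{0,2-i} A_{12,1-i}\Pi^{(2)}_{-i,1-i-k}B_{2,-i-k}$, the sum has at most $n_1^2$ terms.

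Since the indices satisfy $i+k\le n_1-1$ and by hypothesis $\rho_2\le \rho_1$, we obtain $\rho_1 i+\rho_2 k\le \rho_1(i+k)\le \rho_1 n_1$, which places every pair $(i,k)$ in the regime where the first inequality of \eqref{Product1} applies. Using the union bound with threshold $x/n_1^2$ per term, I would write
\[
 \P(|\wt Z_{n_1,2}|>x)\le \sum_{i=1}^{n_1}\sum_{k=0}^{n_1-i-1}\P\bigl(|J_{i,k}|>x n_1^{-2}\bigr)\le C n_1^{2}\cdot (\log x)^{-\xi}x^{-\a}(n_1^{2})^{\a+\eps_x},
\]
where the last inequality is Lemma \ref{Products} applied with $T=n_1^{-2}$.

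It remains to bound $n_1^{2\a+2+2\eps_x}$. Since $n_1\le n_0\le \rho_1^{-1}\log x$ we get $n_1^{2\a+2}\le C(\log x)^{2\a+2}$. From \eqref{xi} we know $\eps_x\le 1$, and a fortiori $n_1^{2\eps_x}\le n_1^{2}\le C(\log x)^{2}$. Combining these two factors yields
\[
 \P(|\wt Z_{n_1,2}|>x)\le C x^{-\a}(\log x)^{-\xi+2\a+4},
\]
which is the claim. There is no substantial obstacle here; the only point that requires care is verifying that the pair $(i,k)$ always falls in the first case of Lemma \ref{Products}, which is exactly what the hypothesis $\rho_2\le \rho_1$ guarantees. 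The rather crude polylogarithmic loss $(\log x)^{2\a+4}$ is acceptable because $\xi$ in \eqref{xi} is made as large as we wish by choosing $\ov D$ sufficiently large, so $-\xi+2\a+4$ is eventually negative.
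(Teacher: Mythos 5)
Your proof is correct and follows essentially the same route as the paper: a union bound over the double sum with per-term threshold $x n_1^{-2}$, the first case of Lemma \ref{Products} (justified by $\rho_2\le\rho_1$ and $i+k\le n_1$), and the crude bound $n_1\le \rho_1^{-1}\log x$ together with $\eps_x\le 1$ to collect the polylogarithmic factor. The paper merely compresses the final step by writing $T^{-\a-\eps_x}\le n_1^{2(\a+1)}$ directly; otherwise the two arguments are identical.
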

\begin{proof} 
In view of \eqref{Product1}, using that $\rho _1i+\rho _2k\leq n_1\rho _1$, we have 
	\begin{align*}
	\P (|\wt Z_{n_1,2}|>x)\leq & \sum _{i=1}^{n_1} \sum _{k=0}^{n_1-i-1}\P
	\big (\big |\Pi ^{(1)}_{0, 2-i}A_{12,1-i} \Pi ^{(2)}_{-i,1-i-k}B_{2,-i-k}\big |> x n_1^{-2}\big )\\
	\leq & C n_1^2 (\log x)^{-\xi }x^{-\a }n^{2(\a +1)}_1 
	\end{align*}
	and the conclusion follows.
	\end{proof} 
\begin{lemma}\label{inftypart} 
	\label{lem0} Suppose ${[A1],[A2]}$. 
	There is $C>0$ such that for $\del >0$, $x>1$ 
	\begin{equation}
\label{ineq:Z_n2}
	\P (|\wt Z^{n_2}|>\del x)\leq Cx^{-\a}\min (\del ,1)^{-\a}(\log x)^{-\xi \slash 2+2\a +1},
	\end{equation}
where $\xi $ is as in \eqref{xi}.
\end{lemma}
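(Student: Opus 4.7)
The plan is to exploit stationarity to factor $\wt Z^{n_2}$ as a product of two independent pieces, and then apply a single Markov inequality at exponent $\a-\eps_x$ with the moment of $\wt W_1$ controlled by Minkowski's inequality.

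First I would shift the summation index $i\mapsto j=i-n_2$ in the definition of $\wt Z^{n_2}$ and factor $\Pi^{(1)}_{0,2-i}=\Pi^{(1)}_{0,1-n_2}\cdot\Pi^{(1)}_{-n_2,2-n_2-j}$, which identifies
\[
\wt Z^{n_2}\,=\,\Pi^{(1)}_{0,1-n_2}\cdot\wt W_{1,-n_2}.
\]
By stationarity $\wt W_{1,-n_2}\stackrel{d}{=}\wt W_{1,0}=:\wt W_1$, and by the i.i.d.\ structure of $(\bfA_t,\bfB_t)$ the two factors are independent, being measurable with respect to disjoint time blocks of the driving sequence. Then I would prove the moment bound $\E|\wt W_1|^{\a-\eps}\leq C\,\eps^{-\a}$ for all small enough $\eps>0$ by applying Minkowski's inequality (or subadditivity when $\a-\eps\leq 1$) to the series \eqref{srestasol2}, using Lemma \ref{lem:lambdaii} to estimate $(\E|A_{11}|^{\a-\eps})^{1/(\a-\eps)}\leq e^{-\rho_1\eps/(2\a)}$, and summing the resulting geometric series of common ratio of the form $1-c\eps$.

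Markov's inequality combined with independence then gives
\[
\P(|\wt Z^{n_2}|>\delta x)\,\leq\, (\delta x)^{-(\a-\eps_x)}(\E|A_{11}|^{\a-\eps_x})^{n_2}\,\E|\wt W_1|^{\a-\eps_x}.
\]
By Lemma \ref{lem:lambdaii} and the same optimisation in $\eps$ carried out in the second part of the proof of Lemma \ref{Products} (here applied with $n_2$ playing the role of $i+k$ and with no $k$-dependence to worry about), the deterministic factor satisfies $x^{-(\a-\eps_x)}\exp((-\rho_1\eps_x+C_0\eps_x^{2})n_2)\leq C x^{-\a}(\log x)^{-\xi}$. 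The moment factor is at most $C\eps_x^{-\a}\leq C(\log x)^{\a/2}$ by the lower bound on $\eps_x$ in \eqref{xi}, and a trivial case split on $\delta\lessgtr 1$ yields $\delta^{-(\a-\eps_x)}\leq\min(\delta,1)^{-\a}$. Assembling these three ingredients yields
\[
\P(|\wt Z^{n_2}|>\delta x)\,\leq\, C x^{-\a}\min(\delta,1)^{-\a}(\log x)^{-\xi+\a/2},
\]
which implies \eqref{ineq:Z_n2} because $-\xi+\a/2\leq -\xi/2+2\a+1$ for every $\xi\geq 0$.

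The main delicate point is the blow-up of $\E|\wt W_1|^{\a-\eps_x}$ as $\eps_x\downarrow 0$: the $\eps_x^{-\a}$ factor is genuinely needed, but it is only a power of $\sqrt{\log x}$ and is therefore comfortably dominated by the $(\log x)^{-\xi}$ produced by the optimisation, provided $\ov D$ (and hence $\xi$) is chosen sufficiently large in accordance with \eqref{ovD}.
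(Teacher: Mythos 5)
Your proposal is correct in its essentials and takes a genuinely different route from the paper's. The paper bounds $\P(|\wt Z^{n_2}|>\delta x)$ by a term-by-term union bound over the infinite series, applying Markov's inequality with a $k$-dependent exponent $\a-\varepsilon_k$ to each summand $\Pi^{(1)}_{0,2-i}A_{12,1-i}W_{2,-i}$ and then splitting the index range into $L< k\leq n_0$ and $k>n_0$ to keep the optimising $\varepsilon_k$ within the allowed range $(0,\eps_0]$. You instead observe that $\wt Z^{n_2}$ factors exactly as $\Pi^{(1)}_{0,1-n_2}\cdot\wt W_{1,-n_2}$, that the two factors are independent because they are built from disjoint time blocks $\{1-n_2,\dots,0\}$ and $\{\dots,-n_2\}$ of the i.i.d.\ driving sequence, and that $\wt W_{1,-n_2}\eqd\wt W_1$ by stationarity; a single Markov application at a fixed exponent $\a-\varepsilon_x$ then suffices, the case split by $k$ disappears, and you obtain the sharper logarithmic exponent $-\xi$ instead of the paper's $-\xi/2$. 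The factorisation is the key observation; it replaces an infinite union bound by a product of two moments and makes the estimate structurally much cleaner.

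One small slip: your claimed moment bound $\E|\wt W_1|^{\a-\varepsilon}\leq C\varepsilon^{-\a}$ is too optimistic. The geometric series coming from the $A_{11}$ products contributes one power $\varepsilon^{-1}$, but $\E|W_2|^{\a-\varepsilon}$ itself blows up like $\varepsilon^{-1}$ because $W_2$ is regularly varying with index exactly $\a$ (this is precisely the estimate $\E|W_2|^{\a-\varepsilon}\leq C\varepsilon^{-1}$ used in the paper's own proof of the lemma). Carrying both factors through Minkowski (for $\a>1$) or subadditivity (for $\a\leq 1$) gives $\E|\wt W_1|^{\a-\varepsilon}\leq C\varepsilon^{-\max(\a+1,\,2)}$, not $C\varepsilon^{-\a}$. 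Since $\varepsilon_x^{-1}=O(\sqrt{\log x})$ by \eqref{xi}, this only changes your final logarithmic exponent from $-\xi+\a/2$ to $-\xi+\max((\a+1)/2,\,1)$, which is still bounded above by $-\xi/2+2\a+1$ for every $\xi>0$, so \eqref{ineq:Z_n2} still follows. The argument is robust to the error, but the claimed exponent should be stated correctly.
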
 
\begin{proof}
We may assume that $\del \leq 1$. The sum $\wt Z^{n_2}$ starts from $i>n_2$ and 
we write $i=n_0+k$, $k\geq L+1 $. Then 
	\begin{align*}
	\P  (|\wt Z^{n_2}| > \delta x )&\le  \sum_{i=n_2+1}^\infty
	\P\big (|\Pi_{0,2-i}^{(1)} A_{12,1-i}W_{2,-i}|>
	(6/\pi ^2)\cdot \delta x /(i-n_0)^2 \big)\\
& =
\sum_{k=L+1}^\infty
\P\big (|\Pi_{0,2-(n_0+k)}^{(1)} A_{12,1-(n_0+k)}W_{2,-(n_0+k)}|>
	(6/\pi ^2)\cdot \delta x /k ^2 \big) \\
& =:\sum_{k=L+1}^\infty \wt I_{n_0+k},
	\end{align*}
	By Markov inequality, for $0<\varepsilon\leq \eps _0$, we have 
	\[
 \wt I_{n_0+k} \le C \big(
	\E |A_{11}|^{\alpha-\varepsilon}
	\big)^{n_0+k-1} \E |A_{12}|^{\alpha-\varepsilon} \E
	|W_2|^{\alpha-\varepsilon}
	(\del x)^{-\alpha+\vep}k^{2(\alpha-\vep)}. 
	\]
	Due to Lemma \ref{lem:lambdaii} and inequality $e^{-\rho _1\eps n_0}\le e^{\eps \rho_1}x^{-\eps}$ we have 
	\begin{align*}
\wt I_{n_0+k} & \leq C e^{(-\rho _1\vep +C_0\vep^2)(n_0+k-1)}x^{-\a +\vep}\del ^{-\a}k^{2(\a-\vep)} \E |W_2|^{\a -\vep }\\
		  & \leq C e^{-\rho _1\vep k+C_0\vep ^2(n_0+k)}x^{-\a }\del ^{-\a}k^{2\a}\E |W_2|^{\a -\vep}.
	\end{align*}
	
	Next we evaluate terms $e^{-\rho_1 \eps k+ C_0\eps^2 (n_0+k)}$
	and $ \E|W_2|^{\alpha-\varepsilon}$. 
As done in the proof of Lemma \ref{Products}, we minimize $-\rho_1 \eps k+ C_0\eps^2 (n_0+k)$ over $\eps$. 
Increasing possibly $C_0$ we may assume that $2\eps _0C_0\slash \rho _1\geq 1$. The minimum is taken at 
\begin{align}
\label{min:at}
 	\eps =\frac{\rho _1k}{2C_0(n_0+k)}=\eps _k\leq \eps_0 
\end{align}
and the minimal value is 
$$
-\frac{(\rho _1k)^2}{4C_0(n_0+k)}\leq \left \{
\begin{array}{ll}
- (\xi/ 2) \cdot \log \log x  & \mbox{for}\ L\leq k\leq n_0 \\
- \rho _1^2k /(8C_0) & \mbox{for}\ k> n_0 
\end{array}.  
\right. 
$$
We evaluate the rate of $\E|W_2|^{\a -\eps }\to \infty$ as $\eps \to 0$. 
By regularly variation of $W_2$, 
\begin{equation*}
		\E |W_2|^{\a -\eps } \leq 1+(\a -\eps)\int _1^{\8}t^{\a-\eps-1}\P (|W_2|>t)\ dt \leq C\eps ^{-1},
\end{equation*}
where by \eqref{min:at}, $\eps^{-1}$ satisfies 
$$
	\eps ^{-1}\leq 4C_0n_0/(\rho _1 k)\quad \text{for}\ L\leq k\leq n_0
	 \quad \text{and}\quad \eps ^{-1}\leq 4C_0/ \rho _1 \quad \text{for}\ k> n_0.
$$
Collecting all the bounds above we reach 
	\begin{align}
	\sum_{k=L+1}^{n_0} \wt I_{n_0+k} & \le C (\log x)^{-\xi \slash 2}  x^{-\alpha} \del ^{-\a}\sum_{k=L+1}^{n_0} \E|W_2|^{\alpha-\varepsilon} k^{2\alpha} \nonumber \\
	&\le  C x^{-\alpha}\del ^{-\a}(\log x)^{-\xi \slash 2+2\alpha+1} 
	\label{evaluationsum1}
	\end{align}
	and 
	\begin{align}
	\sum_{k=n_0+1}^{\infty} \wt I_{n_0+k} \le C x^{-\alpha}\del ^{-\a}\sum_{k=n_0+1}^\infty k^{2\alpha} \exp\Big(
	-\frac{\rho_1^2 k}{8C_0}
	\Big)  \le C x^{-\alpha -1} (\log x)^{2\alpha}\del ^{-\a},\label{evaluationsum2}
	\end{align}
	where for the sum in the middle we apply 
	\[
	\int_{\log x}^\infty y^{2\a }e^{-y}dy = \Gamma(2\alpha+1,\log x) \le C 
	x^{-1} (\log x)^{2\alpha}  
	\]
for sufficiently large $x$. 
Since \eqref{evaluationsum2} is smaller than \eqref{evaluationsum1}, as $x\to \8$, \eqref{ineq:Z_n2} follows. 
	\end{proof}
To estimate the middle part $\wt Z ^{n_1,n_2}$, we use Petrov's large deviation theorem (Theorem \ref{petrov}) 
and so we need an additional assumption $[A4]:\,\log |A_{11}|$ is not lattice.
\begin{lemma}\label{middlepart} 
Suppose that $[A1],[A2], [A4]$ hold, $\beta \geq 0$ and 
	\begin{equation}\label{mestimate}
	\E |M_n|^{\a}=O(n^{\beta})\quad \mbox{as}\quad n\to \8.\end{equation}	
	Then 
	\begin{equation}\label{middlezet}
		P (|\wt Z ^{n_1,n_2}|>x)=o\left (x^{-\a}(\log x)^{\beta}\right ) 
		\quad \mbox{as}\quad x\to \8.
		\end{equation}
\end{lemma}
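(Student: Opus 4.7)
The plan is to combine a union-bound decomposition of the middle band with Petrov's sharp large deviation theorem (Theorem \ref{petrov}) applied to $|\Pi^{(1)}_{0,2-i}|$. For $i\in (n_1,n_2]$ the threshold $\log x$ lies in the Cram\'er window of the random walk $\log|\Pi^{(1)}_{0,2-i}|$, so the Markov-with-optimised-$\eps$ strategy used in Lemmas \ref{Products} and \ref{inftypart} cannot produce a negative power of $\log x$; what saves us is Petrov's $1/\sqrt{i}\asymp 1/\sqrt{\log x}$ improvement, which combined with the hypothesis $\E|M_n|^{\a}=O(n^{\b})$ gives the required rate.

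The first step is to isolate a dominant block. Iterating $W_{2,-i}$ back to time $-n_2$ via \eqref{def:decomp:w2} yields
\begin{equation*}
\wt Z^{n_1,n_2} \;=\; \Pi^{(1)}_{0,1-n_1}\,M_{n_1,n_2}\,W_{2,-n_2} \;+\; \mathcal{E},
\end{equation*}
where $M_{n_1,n_2}\stackrel{d}{=}M_{2L}$ (so the hypothesis gives $\E|M_{n_1,n_2}|^{\a}=O(L^{\b})$), and $\mathcal{E}$ collects the intermediate terms $\Pi^{(1)}_{0,2-i}A_{12,1-i}\Pi^{(2)}_{-i,1-i-k}B_{2,-i-k}$ for $n_1<i\le n_2$, $0\le k\le n_2-i-1$. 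The three factors in the main block are independent by disjointness of the underlying time indices.

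For the main block I would use a dyadic splitting $\{2^j\le|W_{2,-n_2}|<2^{j+1}\}$ for $j\le J_*:=\lfloor c\rho_1 L\rfloor$, together with a tail shell $\{|W_{2,-n_2}|\ge 2^{J_*}\}$. On each Cram\'er-shell I would condition on $M_{n_1,n_2}$ and apply Petrov:
\begin{equation*}
\P\bigl(|\Pi^{(1)}_{0,1-n_1}|>x/(|M_{n_1,n_2}|\cdot 2^{j+1})\bigr)\le \frac{C}{\sqrt{n_1}}\bigl(x/(|M_{n_1,n_2}|\cdot 2^{j+1})\bigr)^{-\a}.
\end{equation*}
Integrating in $M_{n_1,n_2}$ and summing the $O(L)$ Cram\'er-shells, each of which contributes $C\E|M_{n_1,n_2}|^{\a}x^{-\a}\ell_2(2^j)/\sqrt{n_1}$, produces a total of order $L\E|M_{n_1,n_2}|^{\a}x^{-\a}/\sqrt{n_1}=O(L^{\b+1}x^{-\a}/\sqrt{n_1})=O(x^{-\a}(\log x)^{\b/2}(\log\log x)^{(\b+1)/2})$, which is $o(x^{-\a}(\log x)^{\b})$. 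The tail shell contributes at most $C 2^{-\a J_*}\asymp \exp(-c\a\rho_1 L)$, super-polynomially small in $x$. The remainder $\mathcal{E}$ is treated analogously, using Petrov (gain $1/\sqrt{i+k}$) for $(i,k)$ inside the Cram\'er band and Lemma \ref{Products} outside it, with $\E|A_{12}|^{\a+\eta}$ and $\E|B_2|^{\a+\eta}$ finite absorbing the non-product factors; the bound on $\mathcal{E}$ is smaller by a factor $(\log\log x)^{-\b/2}$ than that on the main block.

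The main obstacle is the interplay between Petrov's $1/\sqrt{\log x}$ gain and the heavy tail of $W_2$ (which has $\E|W_2|^{\a}=\infty$): a direct moment-plus-Petrov argument fails, so one must dyadic-split $|W_2|$ against the Cram\'er window and track the slowly varying function $\ell_2$ through each shell. The calibration of $\ov D$ in \eqref{ovD} is precisely what ensures that the middle band $(n_1,n_2]$ sits inside the Gaussian-Cram\'er window of $\log|\Pi^{(1)}_{0,1-n_1}|$ and that the Lemma \ref{Products} exponents $\xi,\eps_x$ on the boundary of the band dominate, so that the Petrov power loss on the inside is absorbed.
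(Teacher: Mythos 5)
Your initial decomposition (iterating $W_{2,-i}$ back to time $-n_2$) coincides with the paper's, which after a shift of indices writes $\wt Z^{n_1,n_2}\stackrel{d}{=}\Pi^{(1)}_{n_1,1}(\wt Z_{2L,1}+\wt Z_{2L,2})$ with $\wt Z_{2L,1}=M_{2L}W_{2,-2L}$. But the treatment of the main block departs from the paper and has a genuine gap. The paper does \emph{not} apply Petrov to $\Pi^{(1)}_{n_1,1}M_{2L}W_{2,-2L}$; it applies a modified Breiman lemma (Lemma 4.7 of \cite{WS}). Since $\Pi^{(1)}_{n_1,1}$ and $M_{2L}$ are independent of $W_2$, $\E|\Pi^{(1)}_{n_1,1}|^{\a}=1$ and $\E|M_{2L}|^{\a}=O(L^{\b})$ by \eqref{mestimate}, Breiman gives $\wh I_1\le C\,\E|M_{2L}|^{\a}\,\P(|W_2|>x/2)=O\big(x^{-\a}(\log x\,\log\log x)^{\b/2}\big)$, which is already $o(x^{-\a}(\log x)^{\b})$ for $\b>0$, with no Cram\'er-window analysis and no dyadic splitting. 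Your premise that ``a direct moment argument fails because $\E|W_2|^{\a}=\infty$'' misdiagnoses the situation: handling exactly that heavy tail is what Breiman's lemma does.

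The Petrov route you substitute is not justified as written, in two places. First, the bound $\P(|\Pi^{(1)}_{0,1-n_1}|>y)\le C\,n_1^{-1/2}y^{-\a}$ follows from Theorem \ref{petrov} only when $\log y$ lies in the window $|\log y-\rho_1 n_1|\lesssim n_1\delta(n_1)$; your threshold $y=x/(|M_{n_1,n_2}|2^{j+1})$ ranges over all of $(0,\infty)$ as $|M_{n_1,n_2}|$ and $j$ vary, and below the window the $n_1^{-1/2}$ factor disappears. Without it, summing $O(L)$ shells gives only $O(L^{\b+1}x^{-\a})$, which is \emph{not} $o(x^{-\a}(\log x)^{\b})$ when $\b\le 1$ --- in particular it fails for the case $\b=1$ of Theorem \ref{main}. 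So ``integrating in $M_{n_1,n_2}$'' cannot be done freely; it would require the same three-regime split (Petrov inside, Chebychev with optimised $\a\pm\eps$ outside) that the paper sets up, but now on top of the dyadic split of $W_2$ --- extra machinery that Breiman renders unnecessary. Your tail-shell estimate is also off: $\exp(-c\a\rho_1 L)$ with $L=O(\sqrt{(\log\log x)\log x})=o(\log x)$ is in fact $\gg x^{-\a}$, not super-polynomially small. Finally, for $\mathcal E$ you invoke Petrov with ``gain $1/\sqrt{i+k}$'', i.e.\ applied to the mixed product $\Pi^{(1)}_{0,2-i}\Pi^{(2)}_{-i,1-i-k}$; but Theorem \ref{petrov} concerns i.i.d.\ sums, and these factors mix $|A_{11}|$ and $|A_{22}|$, which differ in Theorem \ref{main}. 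The paper sidesteps this by applying Petrov to the genuinely i.i.d.\ product $\Pi^{(1)}_{n_1,1}$ alone (conditioning on its size in $[e^m,e^{m+1}]$, Petrov for $|p|\le L$, Chebychev for $|p|>L$), paired against $\wt Z_{2L,2}$ whose tail is uniformly $O(y^{-\a})$ by Corollary \ref{smaller}.
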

\begin{proof}
Due to stationarity we may shift indices and write 
$\wt Z ^{n_1,n_2}\stackrel{d}{=} 
\Pi ^{(1)}_{n_1,1}\wt Z_{2L}$ where $\wt Z_{2L}$ is $\wt Z_{n_1}$ of \eqref{split}
with $2L$ playing the role of $n_1$. 
Applying  \eqref{decomp:wtXunders} 
to $\wt Z_{2L}$ we obtain 
\[
 \wt Z_{2L} = \underbrace{\sum _{i=1}^{2L}\Pi ^{(1)}_{0, 2-i}A_{12,1-i}\Pi ^{(2)}_{-i, 1-2L }W_{2,-2L}}_{
\wt Z_{2L,1} =M_{2L} W_{2,-2L}
} + 
\underbrace{\sum _{i=1}^{2L}\Pi ^{(1)}_{0, 2-i}A_{12,1-i}\sum _{k=0}^{2L-i-1}\Pi ^{(2)}_{-i, 1-i-k }B_{2,-i-k}}_{\wt Z_{2L,2}}.
\]
For the second part, we notice that $\rho_1 i+\rho_2 k\le \max \rho_i \cdot 2L \le n_1\rho_1$ for $x$ large, and so applying \eqref{Product1}
 we have 
\begin{align*}
 \P (|\wt Z_{2L,2}|>x) & \le \sum_{i=1}^{2L} \sum_{k=0}^{2L-i-1} \P
\big(|\Pi^{(1)}_{0,2-i} A_{12,1-i} \Pi_{-i,1-i-k}^{(2)} B_{2,-i-k}|>x/ (2L)^2\big) \\
& \le C(2L)^2 (\log x)^{-\xi}x^{-\a}(2L)^{2(\a+1)} =o(x^{-\a}). 
\end{align*}
Concerning $\wt Z_{2L,1}$,   
by \eqref{mestimate} 
$$
\E |M_{2L}|^{\a}\leq C L^\beta =C \ov D^\beta (\log\log x \cdot \log x)^{\beta \slash 2}.
$$
Then 
\begin{equation*}
\P (|\wt Z^{n_1,n_2}|>x)
\leq \P (|\Pi ^{(1)}_{n_1,1} \wt Z_ {2L,1} |>x\slash 2)+\P (|\Pi^{(1)}_{n_1,1}\wt Z_{2L,2}|>x \slash 2)=: \wh I_1 +
\wh I_2. 
\end{equation*}
For $\wh I_1$ we apply a slightly modified version of the Breiman lemma (see e.g. Lemma 4.7 in \cite{WS}) and obtain
\begin{equation*}
\wh I_1 
\leq C\, \E |\Pi ^{(1)}_{n_1,1}|^\a\, \E |M_{2L}|^{\a}\, \P (|W_{2}|>x \slash 2)
= O(x^{-\a}(\log\log x \cdot \log x)^{\beta \slash 2}). 
\end{equation*}
The estimate for $\wh I_2$ is a little bit more complicated. We have
\begin{align*}
\wh I_2 \le &\sum _{m\geq 1}\P (e^m\leq |\Pi
^{(1)}_{n_1,1}|<e^{m+1}, \ |\wt Z_{2L,2}|>xe^{-m-1} \slash 2) +\P (|\wt Z_{2L,2}|>x e^{-1} \slash 2)\,(=o(x^{-\alpha})).
\end{align*}	
Let $m=\lfloor \rho _1 n_1 \rfloor +1+ p$. Suppose first that $p>L $ or $-\lfloor \rho _1n_1 \rfloor \leq p <-L$. 
Then using Chebychev inequality with $\a \pm\eps$ (see Lemma \ref{lem:lambdaii})  
and proceeding as in the proof of Lemma \ref{Products}, 
we obtain
\begin{equation*}
\P (|\Pi^{(1)}_{n_1,1}|\geq e^m)\leq e^{-\eps |p|+C_0\eps ^2n_1-\a m}.
\end{equation*}
Hence for such $m$ 
\begin{align*}
\P ( |\Pi^{(1)}_{n_1,1}|\geq e^m)& \P ( |\wt Z_{2L,2}|>xe^{-m-1} \slash 2)\leq C e^{-\eps |p|+C_0\eps ^2n_1-\a m}\cdot 
2^{\a} (xe^{-m})^{-\a}\\
&\le C 2^{\a}x^{-\a}e^{-\eps |p|+C_0\eps ^2n_1}
\leq C 2^{\a}x^{-\a}e^{- p^2\slash (4C_0n_1)}, 
\end{align*}
where the last inequality is obtained by minimizing over $\eps$. 

If $|p|\leq L$, we apply Theorem \ref{petrov}, 
which is due to Petrov 
\cite[Theorem 2]{petrov:1965}. Observing $m \ge n_1(\rho_1+p/n_1)$, we set 
the parameters in Theorem \ref{petrov} as 
\[
\beta =\a,\ n=n_1,\ A_i=|A_{11,i}|,\ c=\rho_1,\ \text{and}\ \gamma_n=p/n_1. 
\]
Since we have $[A1]$ and $\E \log |A_{11}|<c=\rho_1$ by convexity, the conditions of Theorem \ref{petrov} are satisfied. 
Since $\Lambda (\a )=0$, 
\begin{equation*}
\P (|\Pi^{(1)}_{n_1,1}|\geq e^m)\leq C_1 n_1^{-1/2} e^{-\a m - C_2 p^2\slash n_1 }
\end{equation*}
and thus
\begin{align*}
\P ( |\Pi^{(1)}_{n_1,1}|\geq e^m) \P ( |\wt Z_{2L,2}|>xe^{-m-1} \slash 2)&
\leq C_1 n_1^{-1/2}e^{-\a m - C_2 p^2\slash n_1 }\cdot 2^{\a}\left (xe^{-m}\right)^{-\a}\\
	&=C_1 2^{\a}x^{-\a}n_1^{-1/2}e^{- C_2 p^2\slash n_1 }.
\end{align*}	
Finally, summing up over $p$ we obtain
\begin{align*}
\wh I_2 
&\leq C2^{\a}x^{-\a}\Big (\sum _{|p|>L}e^{- p^2\slash (4C_0n_1)}
+ \sum _{|p|\leq L}n_1^{-1/2}e^{- C_2 p^2\slash n_1}\Big )\\
	&\leq C2^{\a}x^{-\a}\Big (\int_L^\infty e^{-x^2/(4C_0 n_1)} dx 
+n_1^{-1/2}\int _0^{L}e^{-C_2 x^2/n_1}\ dx\Big) \\
 & \le C x^{-\a}\Big ( 
(2C_0n_1)^{1/2} \int_{L(2C_0n_1)^{-1/2}} e^{-x^2/2}dx +C'
\Big) \\
& \le C x^{-\a}\Big ( 2C_0n_1/L \cdot e^{- L^2/(4 C_0n_1)} +C'
\Big),
\end{align*}
where in the last step, we apply the well-known inequality $\int_{x}^\infty e^{-t^2/2}dt \le x^{-1} e^{-x^2/2}$ to the integral. 
Since $2C_0n_1/L \le C(\log x)^{1/2}$ and 
\[
 e^{-L^2 /(4C_0n_1)} \le e^{-\rho_1 \ov D^2/(4C_0) \cdot \log \log x}  \le (\log x)^{-\rho_1 \ov D^2 /(4C_0)}.
\] 
If $\ov D$ satisfies \eqref{ovD} then 
$\wh I_2=O (x^{-\a})$. Thus \eqref{middlezet} follows. 
\end{proof}
\subsection{Auxiliary results for Theorem \ref{mainterm}}\label{mainpart2}
In this section we prove that 
\begin{equation*}
\P \left (\sum_{s=0}^{J-1}|R_{s,2}|+\sum_{s=0}^{J-1}|Q_{s,2}|>x\right )=o(x^{-\a}),
\end{equation*}
in Lemma \ref{rq2}, as well as we analyze the behavior of
\begin{equation}\label{RQ}
\sum_{s=0}^{J-1}R_{s,1}\quad \mbox{and}\quad  \sum_{s=0}^{J-1}Q_{s,1}.\end{equation}
It turns out that for each of the sums in \eqref{RQ}, the rule of a single jump works; probability of $|R_{s,1}|, |R_{r,1}|$, $s\neq r$ being large at the same time or probability that all $|R_{s,1}|$ are small, is of order
$o(x^{-\a})$, see Lemmas \ref{two} and \ref{allsmall}. This is due to a kind of ``independence'' obtained by separation of indices in $R_{s,1}$ and $R_{r,1}$ 
provided \eqref{ovD}.
Therefore,
$$
\P(\pm \sum_{s=0}^{J-1} R_{s,1}>x) \sim \sum_{s=0}^{J-1} \P(\pm R_{s,1}>x)\quad \text{and} \quad \P(\pm \sum_{s=0}^{J-1} Q_{s,1}>x) \sim \sum_{s=0}^{J-1} \P(\pm Q_{s,1}>x)$$
and the latter is proved in Corollary \ref{Qsum} to be of order $o(x^{-\a }\log x )$. 
\begin{lemma}\label{rq2} 
	Assume $[A1],[A2]$ and $0\le \del \leq 1$. Then
		\begin{align*}
	\P \big ( \sum _{s=0}^J |R_{s,2}|>\del x  \big)=o(x^{-\a})\del ^{-\a -1}\quad \text{and}\quad 
	\P \big ( \sum _{s=0}^{J-1}|Q_{s,2}|>\del x  \big)=o(x^{-\a})\del ^{-\a -1}
\end{align*}
as $x\to \8$.
\end{lemma}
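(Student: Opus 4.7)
The plan is to reduce the statement to a union bound over the atomic summands of $R_{s,2}$ and $Q_{s,2}$, invoking the short-regime estimate of Lemma \ref{Products} for each atom. Unwinding the definitions, each summand of $R_{s,2}$ or $Q_{s,2}$ is exactly one of the expressions
\[
 |\Pi^{(1)}_{0,2-i} A_{12,1-i} \Pi^{(2)}_{-i,1-i-k} B_{2,-i-k}|
\]
appearing in Lemma \ref{Products}, with $i$ running through the appropriate sub-interval of $\{1,\ldots,n_1\}$ and $k\in\{0,\ldots,K-1\}$. A direct count gives at most $n_1 K/2+K^2/2 = O(n_1 K)$ atoms in $\sum_{s=0}^{J}|R_{s,2}|$ and at most $JK^{2\theta} = O(n_1 K^{2\theta-1})$ atoms in $\sum_{s=0}^{J-1}|Q_{s,2}|$.

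Next I verify that every such $(i,k)$ falls in the short regime of Lemma \ref{Products}. By the choice \eqref{K} of $K$, $\rho_2 k\le \rho_2 K\le \rho_1(L/2-1)$, so $\rho_1 i+\rho_2 k\le \rho_1(n_1+L/2-1)$. When $\rho_2\le\rho_1$ we already have $\rho_1 i+\rho_2 k\le \rho_1 n_1$ from $i+k<n_1$, and the original exponent $\xi$ of Lemma \ref{Products} applies. In the case $\rho_2>\rho_1$ I re-run the proof of Lemma \ref{Products} with the reduced buffer $\wt L=L/2$ and $\wt n_1=n_0-L/2$, exactly the device already used in Step 4 of the proof of Lemma \ref{block}; this produces the bound
\[
 \P(|I_{i,k}|>xT)\le C(\log x)^{-\wt\xi}\, x^{-\alpha}\, T^{-\alpha-\wt\eps_x},
\]
with $\wt\xi=\rho_1^3\overline D^2/(16C_0)$ and $\wt\eps_x\le 1$.

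Finally, distributing $\delta x$ uniformly over the $N=O(n_1 K)$ atoms of $\sum_{s=0}^{J}|R_{s,2}|$ and invoking the above estimate with $T=\delta/N$,
\[
 \P\!\left(\sum_{s=0}^{J}|R_{s,2}|>\delta x\right)\le N\cdot C(\log x)^{-\wt\xi}\,x^{-\alpha}\,(\delta/N)^{-\alpha-\wt\eps_x}\le C(\log x)^{-\wt\xi+\gamma}\,x^{-\alpha}\,\delta^{-\alpha-1},
\]
for a polynomial exponent $\gamma=\gamma(\alpha)$ coming from $N^{1+\alpha+\wt\eps_x}=(\log x)^{O(1)}$ combined with $\delta^{-\wt\eps_x}\le\delta^{-1}$ (valid since $\delta\le 1$). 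Provided $\overline D$ in \eqref{ovD} is taken large enough that $\wt\xi>\gamma$, the $(\log x)$-factor is $o(1)$, which yields the claim. The argument for $\sum_{s=0}^{J-1}|Q_{s,2}|$ is identical and, thanks to the smaller atom count, strictly easier.

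The main obstacle lies in the verification above: one must confirm that the overshoot $\rho_2 K$ in the short-regime condition of Lemma \ref{Products} costs only a bounded constant factor in the exponent $\xi$, and that the resulting $\wt\xi$ still dominates the polynomial-in-$\log x$ price of the union bound. Once this bookkeeping is settled, the rest is routine.
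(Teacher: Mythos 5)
Your proposal is correct and follows essentially the same route as the paper: a union bound over the atomic summands $\breve I_{i,k}=\Pi^{(1)}_{0,2-i}A_{12,1-i}\Pi^{(2)}_{-i,1-i-k}B_{2,-i-k}$, application of the short-regime estimate of Lemma \ref{Products} with the reduced parameters $\wt L=L/2$, $\wt n_1 = n_0-L/2$ and $\wt\xi=\rho_1^3\ov D^2/(16C_0)$, then absorbing $N^{1+\a+\wt\eps_x}$ and $\del^{-\wt\eps_x}\le\del^{-1}$ into $(\log x)^{-\wt\xi}\del^{-\a-1}$ for $\ov D$ large. The only cosmetic differences are that the paper does the union bound in two stages (first over $s$, then over $(i,k)$ with thresholds $\del x J^{-1}K^{-2}$, so effectively the same $N\approx JK^2\approx n_1K$) and applies the $\wt\xi$-version uniformly rather than singling out the case $\rho_2\le\rho_1$; your optimization there is valid but not needed, as $\wt\xi$ already suffices once $\ov D$ is taken large enough.
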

\begin{proof} 
We start with inequality 
\begin{equation*}
\P \big ( \sum _{s=0}^J |R_{s,2}|>x  \big)\leq  \sum _{s=0}^J\P \big(| R_{s,2}|> x J^{-1} \big), 
\end{equation*}
and observe that $R_{s,2}$ is the sum of at most $K^2$ (actually $(K')^2$) terms of the type
	$$
\breve I_{i,k}=\Pi ^{(1)}_{0,2-i}A_{12,1-i}\Pi ^{(2)}_{-i,1-i-k}B_{2,-i-k}$$
	with indices 
\begin{align*}
sK+1\leq i\leq sK+K',\quad k\leq sK+K'-i-1     &\quad \text{if}\quad s\leq J-1, \\
JK+1\leq i\leq n_1,\quad k\leq n_1-i-1          &\quad \text{if}\quad s=J.  
\end{align*}
Hence we further obtain 
$$\P \big (| R_{s,2}|> \del x J^{-1}  \big )\leq \sum _{i,k}\P \big (|\breve I_{i,k}| \del x J^{-1} K^{-2}  \big ).$$   
We will apply Lemma \ref{Products} in the present setting. Since $K \leq \rho _1 \rho _2^{-1} (L/2-1)$, it follows 
that 
\begin{align*}
\rho _1i+\rho _2k\leq &\rho _1n_1+\rho_2(K-1)
=\rho _1n_0-\rho_1L+\rho_2K\leq \rho _1(n_0-L/2-1). 
\end{align*}
Take $\wt L=L/2$, $\wt{\ov D}= \ov D/2$, $\wt n_1=\lfloor n_0-\wt L \rfloor $ and then $\rho _1i+\rho _2k\leq \rho _1\wt n_1$.
It is not difficult to observe that the proof of Lemma \ref{Products} does not change with this setting. 
Now the first part of \eqref{Product1} with $\wt \xi = \frac{\rho _1^3\wt{\ov D}^2}{4C_0} =\frac{\rho _1^3 \ov D^2}{16C_0}$ yields 
\begin{equation*}
\P \big (|\breve I_{i,k}|> \del x J^{-1} K^{-2}  \big )\leq C(\log x)^{-\wt \xi}x^{-\a}(JK^2)^{\a +1}\del ^{-\a -1}.
\end{equation*}
Finally, noticing that $JK\le \rho_1^{-1} \log x$ and $K^{\a+2}\le C(\log x)^{\a/2+2}$, we obtain 
\begin{equation*}
\P \big ( \sum _{s=0}^J |R_{s,2}|>x  \big )\leq 
JK^2 \P \big (|\breve I_{i,k}|> \del x J^{-1} K^{-2} \big ) \le 
C(\log x)^{-\wt \xi +2\a +4}x^{-\a}\del ^{-\a -1}
\end{equation*}
and the conclusion follows provided $\ov D$ in $\wt \xi$ is large enough. In the same way we prove the inequality for $Q_{s,2}$. 
\end{proof}
Next we prove that probability of $R_{s,1}, R_{r,1},\,s\neq r$ being large at the same time is of smaller order. 
\begin{lemma} \label{two} 
Suppose that $[A1], [A2]$ are satisfied and 
	$\del _1, \del _2\leq 1$. 
If $s\neq r$ then  
		\begin{align*}
		\P (|R_{s,1}|>\del _1x, \ |R_{r,1}|>\del _2x)&=o(x^{-\a})\left ( \del _1^{-\a }+\del _2^{-\a }\right),\\
		\P (|Q_{s,1}|>\del _1x, \ |Q_{r,1}|>\del _2x)&=o(x^{-\a})\left ( \del _1^{-\a }+\del _2^{-\a }\right), 
			\end{align*}
			uniformly in $s$ and $r$.
\end{lemma}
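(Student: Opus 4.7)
Assume $s<r$; the case of $Q_{s,1}$ is identical with $K'$ replaced by $\lfloor K^\theta\rfloor$. My plan is to expand
\[
R_{s,1}R_{r,1}=\sum_{i\in I_s,\,i'\in I_r}T_iT'_{i'},
\]
where $T_i=\Pi^{(1)}_{0,2-i}A_{12,1-i}\Pi^{(2)}_{-i,1-sK-K'}W_{2,-sK-K'}$, $T'_{i'}$ is the analogue for block $r$, and $I_s=\{sK+1,\ldots,sK+K'\}$, $I_r=\{rK+1,\ldots,rK+K'\}$. The union bound reduces matters to estimating $\P(|T_i|>\del_1 x/K',\,|T'_{i'}|>\del_2 x/K')$ for each of the $(K')^2$ pairs $(i,i')$.

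For such a pair, I would split the $W_2$-factor of $T_i$ via the SRE iteration:
\[
W_{2,-sK-K'}=\Pi^{(2)}_{-sK-K',1-rK-K'}W_{2,-rK-K'}+\wt S,
\]
where $\wt S$ depends only on $(A_{22,t},B_{2,t})$ at indices in the buffer $\{1-rK-K',\ldots,-sK-K'\}$ and is therefore independent of $W_{2,-rK-K'}$. The $\wt S$-contribution factorises by independence and is $O(\log x\cdot x^{-2\a})=o(x^{-\a})$ after invoking the standard tail bounds on $\wt S$ (a partial SRE solution) and on $W_2$. For the $\Pi^{(2)}W_{2,-rK-K'}$-contribution, I condition on $W_{2,-rK-K'}$ and apply Markov with the optimised exponent $\a-\eps$ from Lemma \ref{Products}; the factor $(\E|A_{22}|^{\a-\eps})^{(r-s)K}\le e^{-c\eps(r-s)K}$, produced by the $(r-s)K\ge K$ independent $A_{22}$-factors of $\Pi^{(2)}_{-sK-K',1-rK-K'}$, delivers the required $o(x^{-\a})$ decay even after summing over the $(K')^2$ pairs and paying a polylog factor.

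The main obstacle is tracking the dependence among $T_i$ and $T'_{i'}$: both contain the same prefix $\Pi^{(1)}_{0,1-sK}$ in their $\Pi^{(1)}$-factors, and since the tuple $(A_{11,t},A_{12,t},A_{22,t},B_{2,t})$ is not assumed componentwise independent, the $A_{11}$ and $A_{12}$ entries of $T'_{i'}$ may be correlated with the $A_{22}$ and $B_2$ entries appearing in $T_i$ through $W_{2,-sK-K'}$. I would address this by first conditioning on $\mathcal A:=\sigma(A_{22,t},B_{2,t}:t\in\Z)$, under which the $W_2$-factors and the $\Pi^{(2)}$-factors become deterministic while the conditional laws of $(A_{11,t},A_{12,t})$ remain independent across $t$; the factorwise estimates above are then carried out conditionally and integrated back. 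Uniformity in $s\neq r$ follows since all constants depend only on $K$, $L$ and universal parameters from [A1]--[A2].
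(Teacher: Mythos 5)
Your overall scaffolding resembles the paper's (expand into elementary products, use the gap between blocks, union bound over pairs), but the central estimate as written does not close, for reasons that are structural rather than cosmetic.

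The decisive gap is in how you bound $\P(|T_i|>\del_1 x/K',\,|T'_{i'}|>\del_2 x/K')$. You propose to condition on $W_{2,-rK-K'}$ and apply Markov with exponent $\a-\eps$ to one factor, invoking $(\E|A_{22}|^{\a-\eps})^{(r-s)K}<1$. This only uses \emph{one} of the two events. But $\E|\Pi^{(1)}_{0,2-i}|^\a=\E|\Pi^{(2)}_{-i,1-rK-K'}|^\a=1$, so by Breiman's lemma the marginal is $\P(|T_i|>t)\sim c\, t^{-\a}$, with no dependence on $r-s$; the apparent gain from $(\E|A_{22}|^{\a-\eps})^{(r-s)K}$ is cancelled by the penalty $t^\eps$ when you optimize over $\eps$. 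Summing marginals over the $(K')^2$ pairs gives a quantity of order $(K')^{\a+2}x^{-\a}$, a \emph{positive} power of $\log x$, which is the opposite of what is needed. To get $o(x^{-\a})$ you must genuinely exploit the simultaneity of the two events. The paper does this by applying Markov with exponent $\a/2$ to the \emph{product} $J_{i_1,j_1}J_{i_2,j_2}$ and observing that on the $\ge K^\theta+(r-s-1)K$ time indices $m$ lying in the gap of the $\Pi^{(1)}$-factors, the product accumulates terms $|A_{11,m}A_{22,m}|^{\a/2}$, whose expectation is strictly below $1$ by \emph{strict} Cauchy–Schwarz (this is exactly where $\P(A_{11}\ne A_{22})>0$ enters). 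That geometric factor $\gamma^{K^\theta+(r-s-1)K}$, with $K^\theta\to\infty$, is what beats the $(Kp)^{2+\a}$ from the union bound. Nothing in your write-up produces such a factor; in particular for $r=s+1$ your gap $(r-s-1)K$ is zero, and the $K^\theta$-sized buffer of $Q_s$ between adjacent $R$-blocks — which is precisely what rescues that case in the paper — plays no role in your argument.

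Two subsidiary points. First, your truncation $W_{2,-sK-K'}=\Pi^{(2)}_{-sK-K',1-rK-K'}W_{2,-rK-K'}+\wt S$ makes $W_{2,-rK-K'}$ a \emph{common} factor of $T_i$ and $T'_{i'}$; even if you then tried the joint Markov-with-$\a/2$ route, $\E|T_iT'_{i'}|^{\a/2}$ would contain $\E|W_{2,-rK-K'}|^\a=\infty$. The paper sidesteps this by truncating at a fixed depth $p$ into $P_{s,1}+P_{s,2}$, so that each elementary piece ends in $B_2$, for which $\E|B_2|^{\a/2}<\infty$; you would need an analogous truncation (or a Breiman-with-cutoff argument) to proceed. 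Second, conditioning on $\mathcal A=\sigma(A_{22,t},B_{2,t})$ does not resolve the within-tuple dependence: once you condition, the unconditional moment $\E|A_{22}|^{\a-\eps}$ that your estimate relies on is no longer available, and the conditional law of $(A_{11,t},A_{12,t})$ given $(A_{22,t},B_{2,t})$ is not controlled by $[A1]$–$[A2]$. The paper never needs this conditioning — the factorization of $\E(J_{i_1,j_1}J_{i_2,j_2})^{\a/2}$ by time index automatically handles the dependence, producing unconditional expectations such as $\E|A_{12}|^{\a/2}|A_{22}|^{\a/2}$ that are finite by $[A2]$.
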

	
\begin{proof} 
Let $p=\lfloor \rho _2^{-1}\log x +L\rfloor $. Then 
\[
 W_{2,-2K-K'} = \big(\underbrace{\sum_{k=0}^{p-1}}_{P_{s,1}} +\underbrace{\sum_{k=p}^\infty}_{P_{s,2}} \big)
\Pi^{(2)}_{-sK-K',1-sK-K'-k} B_{2,-2K-K'-k}=: P_{s,1} +P_{s,2},  
\]
and, in view of of \eqref{eq:def:Rs1}, we have 
\begin{equation*}
	R_{s,1}=\Pi _{0,1-sK}^{(1)}M_{sK,sK+K'}W_{2,-sK-K'}=\Pi _{0,1-sK}^{(1)}M_{sK,sK+K'}(P_{s,1}+P_{s,2}).
\end{equation*}
	First we prove that there is $C$ such that for every $x>1$, every $\del _1\leq 1$ and all $s$, 
	\begin{equation}\label{P2}
	\P (|\Pi _{0,1-sK}^{(1)}M_{sK,sK+K'}P_{s,2}|>\del _1x)\leq C\del _1^{-\a }x^{-\a}(\log x)^{-\xi +2\a +2},
	\end{equation}
	where $\xi =\frac{\rho _2^3 \ov D^2}{8C_0}$. Proceeding exactly as in the proof of Lemma \ref{inftypart}, we have
	\begin{equation}\label{tail2}
	\P (|P_{s,2}|>\del x)\leq C\del ^{-\a }x^{-\a}(\log x)^{-\xi +2\a +1}
	\end{equation}
	for $\del \leq 1$ (To follow the proof of Lemma \ref{inftypart}, 
	$\wt Z^{n_2}$ is replaced with $P_{s,2}$, and $n_2$ with $p=\lfloor \rho _2^{-1}\log x +L\rfloor $,
	 $\Pi ^{(2)}$ plays the role of $\Pi ^{(1)}$ and $B_2$ the role of $A_{12}W_2$).
	Moreover, in view of \eqref{tail2},   
	\begin{align*}
	& \P (|\Pi _{0,1-sK}^{(1)} M_{sK,sK+K'}P_{s,2}|>\del _1x) \\ 
        & \leq \P (|P_{s,2}|>\del _1x) 
	+\P (|P_{s,2}|>|\Pi _{0,1-sK}^{(1)}M_{sK,sK+K'}|^{-1}\del _1x, |\Pi _{0,1-sK}^{(1)}M_{sK,sK+K'}|\geq 1 )\\
	&\leq C\del _1 ^{-\a }x^{-\a}(\log x)^{-\xi +2\a +1} (1+\E|\Pi _{0,1-sK}^{(1)}M_{sK,sK+K'}|^{\a} )\\
	&\leq C\del _1^{-\a }x^{-\a}K'(\log x)^{-\xi +2\a +1}, 
	\end{align*}
 where Breiman's lemma is applied in the second step and \eqref{limit1} in the last. 
 Thus \eqref{P2} follows. 
Without loss of generality we may assume $s<r$ and we proceed to evaluate 
	\begin{equation*}
I'=\P (|\Pi _{0,1-sK}^{(1)}M_{sK,sK+K'}P_{s,1}|>\del _1x,\ |\Pi _{0,1-rK}^{(1)}M_{rK,rK+K'}P_{r,1}|>\del _2x),
	\end{equation*}
since our target is bounded as 
\begin{align}
\label{eq:pf:r1r2}
 \P(|R_{s,1}|>2\del _1x, \ |R_{r,1}|>2\del _2x) &\le I'+ \P(|\Pi _{0,1-sK}^{(1)}M_{sK,sK+K'}P_{s,2}|>\del _1x) \\
&\quad + \P(|\Pi _{0,1-rK}^{(1)}M_{rK,rK+K'}P_{r,2}|>\del _2x). \nonumber
\end{align}
Notice that the number of terms in $M_{sK,sK+K'} P_{s,1}$ or $M_{rK,rK+K'} P_{r,1}$ is at most $Kp$.  
For indices $sK+1\leq i_1< sK+K',\ rK+1\leq i_2 <rK+K'$ and $0\leq j_1, j_2\leq p-1$, we consider the events 
	\begin{align*}
	J_{i_1,j_1}:=&|\Pi ^{(1)}_{0,2-i_1}A_{12,1-i_1}\Pi ^{(2)}_{-i_1,1-sK-K'-j_1}B_{2,-sK-K'-j_1}|>\frac{\del _1x}
	{Kp},\\
	J_{i_2,j_2}:=&|\Pi ^{(1)}_{0,2-i_2}A_{12,1-i_2}\Pi ^{(2)}_{-i_2,1-rK-K'-j_2}B_{2,-rK-K'-j_2}|>\frac{\del _2x}
	{Kp}. 
	\end{align*}
Hence, Markov inequality yields  
	\begin{equation*}
\P \Big(
J_{i_1,j_1} > \frac{\del _1x}{Kp}, J_{i_2,j_2}> \frac{\del _2x}{Kp} 
\Big) \le 
	\P \Big (J_{i_1,j_1}J_{i_2,j_2}>\frac{\del _1\del _2 x^2}{K^2p^2}
	 \Big)\leq \E (J_{i_1,j_1}J_{i_2,j_2})^{\a \slash 2} (\del _1\del _2)^{-\a \slash 2} x^{-\a}
(Kp)^{\a}.
	\end{equation*}
To estimate the expectation in the above formula we write $(J_{i_1,j_1}J_{i_2,j_2})^{\a \slash 2}$ as the product of two i.i.d. random 
products 
and $\E (J_{i_1,j_1}J_{i_2,j_2})^{\a \slash 2}$ is written as the product of expectations of 
variables grouped in the same index. 

 For an index $m \notin \m=\{ 1-i_1, 1-i_2, -sK-K'-j_1, -rK-K'-j_2\}$ the
 terms related to $A_{11,m}$ and $A_{22,m}$ in each product are of the form
	\begin{equation*}
	|A_{11,m}|^{\a \slash 2}, |A_{11,m}|^{\a}, |A_{22,m}|^{\a \slash 2}, |A_{22,m}|^{\a} 
	\quad \mbox{or} \quad |A_{11,m}A_{22,m}|^{\a \slash 2}.
	\end{equation*}
	Moreover,
\begin{align*}
	&\E |A_{11,m}|^{\a \slash 2}<1, \quad \E |A_{22,m}|^{\a \slash 2}<1, \quad \E |A_{11,m}A_{22,m}|^{\a \slash 2}<1,
\end{align*}
where the third inequality, 
for $A_{11}\neq A_{22}$, follows from the strict inequality of Schwartz.

For an index $m \in \m $ 
expectations are finite
because
\[
\E|A_{12}|^\a (|A_{11}|^{\a/2}+|A_{22}|^{\a/2}) <\infty \quad \text{and}\quad \E|B_2|^{\a/2}
(|A_{11}|^{\a/2}+|A_{22}|^{\a/2}+|A_{12}|^{\a/2}+|B_2|^{\a/2})<\infty.
\]
Notice that in $J_{i_1j_1}$ at least $i_1-1$ of $A_{11}$ terms exist and in $J_{i_2 j_2}$ at least $i_2-1$, while the number of 
$A_{22}$ terms in both also depend on $0\le j_\ell \le p-1\,(\ell=1,2)$ and could possibly be zero. 
Thus, the number of types 
$|A_{11,m}|^{\a/2}$, $|A_{11,m} A_{22,m}|^{\a/2}$ in each product is at least 
$i_2-i_1\geq K^{\theta}+(r-s-1)K$. Since $J_{i_1,j_1}, J_{i_2,j_2}$ may be chosen in at most $(Kp)^2$ ways, 
for $\gamma=\min \{\E|A_{11,m}|^{\a/2},\E|A_{11,m}A_{22,m}|^{\a/2}\}$, 
\begin{equation}
\label{ineqlemmatwo}
I' \leq C\g  ^{K^{\theta}+(r-s-1)K }(\del _1\del _2)^{-\a \slash 2}x^{-\a} (Kp)^{2+\alpha}. 
\end{equation}
Since 
$$\gamma^{K^{\theta}} (Kp)^{2+\alpha} \le C \gamma^{K^{\theta}} K^{3(2+\alpha)}=o\left ((\log x)^{-\xi +2\a +2}\right ), \quad \mbox{as}\quad x\to \8 ,$$
recalling \eqref{eq:pf:r1r2}, 
from \eqref{P2}, \eqref{ineqlemmatwo} and \eqref{ovD} it follows that
\begin{equation}\label{Rtwo}
\P (|R_{s,1}|>\del _1 x, |R_{r,1}|>\del _2 x )\leq C \left ( \del _1 ^{-\a}+\del _2 ^{-\a} \right )(\log x)^{-\xi \slash 2}x^{-\a }.
\end{equation}	
In the same way we prove the statement for $Q_{s,1}$.  
\end{proof}
Finally, we show that probability that all blocks are very small is of smaller order which, together with the previous lemma, means that asymptotics is given by one block being large.
\begin{lemma}\label{allsmall}
Suppose that $[A1], [A2]$ are satisfied, $\del <1$ 
	 and $0<8 \del \leq \eps $. Then 
\begin{align}
\label{event:R}
\P \Big (\sum _{s=0}^{J-1} |R_{s,1}|>\eps x,\quad \forall s\ |R_{s,1}|\leq \del x\Big) &=o(x^{-\a})\del ^{-\a}\\ 
\P \Big (\sum _{s=0}^{J-1} |Q_{s,1}|>\eps x,\quad \forall s\ |Q_{s,1}|\leq \del x\Big)&=o(x^{-\a})\del ^{-\a} \nonumber 
\end{align}
as $x\to \8$.
\end{lemma}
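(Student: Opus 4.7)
The strategy is a reduction to Lemma~\ref{two}: on the event in \eqref{event:R}, at least two distinct blocks among the $R_{s,1}$'s must be moderately large, and the probability of such a configuration is controlled by a pairwise union bound using the double-tail estimate \eqref{Rtwo} already established. The same reasoning will apply verbatim to the $Q_{s,1}$'s by invoking the $Q$-version of \eqref{Rtwo} mentioned at the end of the proof of Lemma~\ref{two}.

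Set $\lambda = \varepsilon/(4J)$ and call an index $s$ \emph{big} if $|R_{s,1}| > \lambda x$. On the event in \eqref{event:R}, if no index is big, then $\sum_{s=0}^{J-1}|R_{s,1}| \leq J\lambda x = \varepsilon x/4$, contradicting the defining lower bound $\varepsilon x$. If exactly one index $s^*$ is big, then $|R_{s^*,1}| \leq \delta x \leq \varepsilon x/8$ (from $8\delta \leq \varepsilon$) while $|R_{s,1}| \leq \lambda x$ for $s \neq s^*$, which yields
\[
\sum_{s=0}^{J-1}|R_{s,1}| \leq \tfrac{\varepsilon x}{8} + (J-1)\lambda x \leq \tfrac{3\varepsilon x}{8} < \varepsilon x,
\]
again a contradiction. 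Hence at least two distinct indices $s \neq r$ must satisfy $|R_{s,1}| > \lambda x$ and $|R_{r,1}| > \lambda x$. A union bound over the at most $J^2$ such pairs together with \eqref{Rtwo} applied at $\delta_1=\delta_2=\lambda$ gives
\begin{align*}
\P\Big(\sum_{s=0}^{J-1} |R_{s,1}| > \varepsilon x,\ \forall s\ |R_{s,1}| \leq \delta x\Big)
&\leq \sum_{s \neq r} \P(|R_{s,1}| > \lambda x,\, |R_{r,1}| > \lambda x) \\
&\leq C J^2 \lambda^{-\alpha}(\log x)^{-\xi/2} x^{-\alpha} \\
&= C\, 4^{\alpha}\, J^{2+\alpha}\, \varepsilon^{-\alpha}(\log x)^{-\xi/2} x^{-\alpha}.
\end{align*}

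Since $K \sim \rho_1\rho_2^{-1}(L/2-1)$ and $JK \leq n_1 \sim \rho_1^{-1}\log x$, we have $J = O((\log x)^{1/2}(\log\log x)^{-1/2})$, hence $J^{2+\alpha} = O((\log x)^{(2+\alpha)/2})$. On the other hand, \eqref{ovD} forces $\xi = \rho_1^3 \ov D^2/(4C_0) > 2+\alpha$, so $J^{2+\alpha}(\log x)^{-\xi/2} = o(1)$. Combined with $\varepsilon \geq 8\delta$, whence $\varepsilon^{-\alpha} \leq 8^{-\alpha}\delta^{-\alpha}$, this produces the claimed bound $o(x^{-\alpha})\delta^{-\alpha}$; the identical argument applied to $Q_{s,1}$ (with the corresponding pairwise estimate from Lemma~\ref{two}) yields the second assertion. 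The only delicate point is ensuring that the polynomial-in-$\log x$ factor $J^{2+\alpha}$ is absorbed by the logarithmic decay $(\log x)^{-\xi/2}$, which is precisely why the lower bound on $\ov D$ in \eqref{ovD} was imposed in the first place; no new hypothesis is required.
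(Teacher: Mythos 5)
Your proof is correct and leans on the same ingredient as the paper, namely the pairwise estimate \eqref{Rtwo} (and its $Q$-analogue) from Lemma~\ref{two}, but the combinatorial reduction to that estimate is genuinely different. The paper decomposes the range of $|R_{s,1}|$ into dyadic shells $\I_j=\{s:e^{-j}\delta x<|R_{s,1}|\le e^{-j+1}\delta x\}$, argues by pigeonhole that for some $j$ the shell $\I_j$ must contain $\gtrsim e^{j}\eps/(j^2\delta)$ indices, then picks a pair in that shell and union-bounds over $j\le \log(C\log x)$ and over $n_1^2$ pairs. You instead use a single fixed threshold $\lambda=\eps/(4J)$: if at most one block exceeds $\lambda x$ while all remain $\le \delta x$ with $8\delta\le\eps$, the total sum is $\le \delta x + J\lambda x\le 3\eps x/8<\eps x$, so at least two blocks exceed $\lambda x$, and a single union bound over $\binom{J}{2}$ pairs applies. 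Both routes yield what is needed; yours avoids the shell decomposition and the extra summation over $j$, producing a bound of order $J^{2+\alpha}(\log x)^{-\xi/2}\eps^{-\alpha}x^{-\alpha}=O((\log x)^{(2+\alpha)/2-\xi/2})\delta^{-\alpha}x^{-\alpha}$, which requires only $\xi>2+\alpha$, a weaker constraint than the $\xi>2\alpha+6$ implicit in the paper's $(\log x)^{-\xi/2+\alpha+3}$; in both cases \eqref{ovD} is more than enough. One small point worth making explicit, which you implicitly handle: the application of \eqref{Rtwo} with $\delta_1=\delta_2=\lambda(x)$ is legitimate because the constant in \eqref{Rtwo} is uniform in $\delta_1,\delta_2\le1$, and $\lambda(x)\le 1$ once $J$ is large; for small $x$ (so small $J$) the event in \eqref{event:R} is empty anyway since $\sum|R_{s,1}|\le J\delta x<\eps x$ when $J<\eps/\delta$.
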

\begin{proof}
We will prove \eqref{event:R} only for $R$-blocks. For $Q$-blocks the proof is  similar. 
Assume that $\sum _{s=0}^{J-1} |R_{s,1}|>\eps x$, and split
the event $\{s:|R_{s,1}|\le \delta x\}$ into 
\begin{equation*}
\I _j=\{ s: e^{-j}\del x<|R_{s,1}|\leq e^{-j+1}\del x\},\quad j=1,2,\ldots 
\end{equation*}
There should be $j$ such that $\I_j$ has at least 
$e^{j-1} \eps /(2j^2 \delta):=n(j)$ 
elements 
since $\# \I_j \le n(j)$ for all $j$ implies 
\begin{equation*}
\sum _{s=0}^{J-1}|R_s|=\sum _{j\geq 1}\sum _{s\in \I_j}|R_s|\leq \sum_{j\ge 1} \eps x/(2j^2) <\eps x. 
\end{equation*}
Thus the event of \eqref{event:R} is included in $\cup_j \{\# \I_j\ge n(j)\}$. 
Moreover, $$2< 4 e^{j-1}/j^2\le n(j)\le J\leq C\sqrt{\log x}$$ implies 
\begin{equation}\label{number}
 \# \I_j \ge 3\quad  \mbox{and}
\quad e^j\leq C\log x.\end{equation} 
Let $s,r\in \I _j$ such that $r-s \ge n(j)-1$ and then  
\[
 \{\# \I_j \ge n(j)\} \subset \{|R_{s,1}| > \delta e^{-j}x,\,|R_{r,1}|>\delta e^{-j} x\}. 
\]
Now applying  \eqref{Rtwo}, 
in view of \eqref{number}, we obtain for $x\ge 2$ 
\begin{align*}
\P (|R_{s,1}|>\del e^{-j}x, \ |R_{r,1}|>\del e^{-j}x)&\leq C
 \del  ^{-\a}e^{j\a }x^{-\a} (\log x)^{-\xi \slash 2}\\
 &\leq C
 \del  ^{-\a}x^{-\a} (\log x)^{-\xi \slash 2+\a}.
\end{align*}
Since we may choose $s,r$ in at most $n_1^2=O((\log x)^2)$ ways, 
\begin{equation*}
\P (\# \I _j \ge n(j))\leq C
\del  ^{-\a}x^{-\a} (\log x)^{-\xi \slash 2+\a +2}
\end{equation*}
and so by \eqref{number}, 
\begin{align*}
\P \Big (\sum _{s=0}^{J-1} |R_s|>\eps x,\ \forall s \ |R_s|\leq \del x\Big)
&\le \sum_{j\ge 1} \P(\# \I_j \ge n(j)) \\
&\le C
\del  ^{-\a}x^{-\a} (\log x)^{-\xi \slash 2+\a +3}
\end{align*}
because $j\leq \log (C\log x).$ 
\end{proof}
\begin{corollary}\label{Qsum} 
Under assumptions of Theorem \ref{main}, for $\eps >0$ we have 
	\begin{equation}\label{mainQ}
		\P (\sum _{s=0}^{J-1}|Q_{s,1}|>\eps x)=o(x^{-\a}\log x)\min(\eps ,1) ^{-\a}
\end{equation}
as $x\to \8$.
\end{corollary}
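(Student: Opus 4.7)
The plan is to mirror the decomposition used for the $R_{s,1}$-blocks in the proof of Theorem \ref{mainterm}, exploiting the fact that each $Q_{s,1}$-block contains only $\lfloor K^{\theta}\rfloor$ summands rather than $K'$. By Lemma \ref{block}, a single $Q$-block satisfies $\P(|Q_{s,1}|>x) \sim c_2 c_R \a K^{\theta}\, x^{-\a}$, uniformly in $s$, which is a factor $K^{\theta-1}$ smaller than the analogous tail for $|R_{s,1}|$. Since $J \le \rho_1^{-1}\log x / K$, this already suggests that a bare union bound over $s$ will be of order $K^{\theta-1}\log x = o(\log x)$, so no delicate two-block accounting in the spirit of Lemma \ref{two} will be required for the $Q$-sum.

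First I would fix $\del = \min(\eps,1)/8$ so that $8\del \le \eps$ and the $Q$-version of Lemma \ref{allsmall} is applicable. I would then split the event $\{\sum_s |Q_{s,1}|>\eps x\}$ according to whether or not all the $|Q_{s,1}|$ lie below $\del x$. On the event where they all do, Lemma \ref{allsmall} directly yields a bound $o(x^{-\a})\del^{-\a}$, which is absorbed into the target $o(x^{-\a}\log x)\min(\eps,1)^{-\a}$. On the complementary event some $|Q_{s,1}|$ exceeds $\del x$, and a union bound combined with Lemma \ref{block} gives
$$
\sum_{s=0}^{J-1} \P(|Q_{s,1}|>\del x) \le C\, J\, K^{\theta}\, (\del x)^{-\a} \le C\rho_1^{-1} K^{\theta-1} (\log x)\, \del^{-\a}\, x^{-\a} = o(x^{-\a}\log x)\, \del^{-\a}.
$$
Combining the two bounds with $\del \asymp \min(\eps,1)$ yields \eqref{mainQ}.

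The main obstacle is essentially bookkeeping: one has to verify that the limit in Lemma \ref{block} is genuinely uniform in $s$ (which the statement already asserts) and that the $Q$-variants of Lemmas \ref{two} and \ref{allsmall} are valid with identical proofs to the $R$-versions. Because the per-block tail $K^{\theta}x^{-\a}$ is already small enough that summation in $s$ stays strictly below the target order $x^{-\a}\log x$, no two-block cross-estimate of the kind used in Lemma \ref{two} needs to enter here, making this argument strictly simpler than its $R$-analogue inside the proof of Theorem \ref{mainterm}.
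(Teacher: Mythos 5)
Your proposal is correct and gives a genuinely simpler route than the paper's proof. The paper mirrors the full three-way case decomposition used for the $R$-blocks — all $|Q_{s,1}|$ small, at least two large (Lemma \ref{two}), exactly one large with a further split — whereas you observe that because a single $Q$-block has tail of order $K^{\theta}x^{-\alpha}$ by Lemma \ref{block}, the bare union bound already gives
\[
\sum_{s=0}^{J-1}\P(|Q_{s,1}|>\delta x)\ \leq\ C\,J\,K^{\theta}\,(\delta x)^{-\alpha}\ \leq\ C\,K^{\theta-1}(\log x)\,\delta^{-\alpha}x^{-\alpha}\ =\ o(x^{-\alpha}\log x)\,\delta^{-\alpha},
\]
since $J\leq n_1/K$ and $K\to\infty$ with $\theta<1$. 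Paired with Lemma \ref{allsmall} for the complementary event that all $|Q_{s,1}|\leq\delta x$ (for which your choice $\delta=\min(\varepsilon,1)/8$ satisfies the required $8\delta\leq\varepsilon$), this yields \eqref{mainQ} without any two-block cross estimate from Lemma \ref{two}. The paper's extra bookkeeping and reliance on Lemma \ref{two} is unnecessary here and is presumably an artifact of parallelism with the $R$-block argument, where the analogous union bound over single blocks would only give $O(x^{-\alpha}\log x)$ (the correct order, hence not useful for showing negligibility). Your version exploits the shortness of the $Q$-blocks to skip that machinery and is clean.
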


\begin{proof}
We may assume that $\eps \leq 1$. Choose $\del =\eps \slash 16$. Similarly as in
\eqref{reminder11} and \eqref{reminder22} we decompose the event $\{\sum_{s=0}^{J-1}|Q_{s,1}| >\eps x\}$
into three patterns: either all $|Q_{s,1}|$ are smaller than $\del x$ or there are at least two of them which are larger than $\del x$ 
or just one is larger than $\del x$. By Lemmas \ref{two} and \ref{allsmall} we have
\begin{equation*}
\P \big ( \sum _{s=0}^{J-1}|Q_{s,1}|>\eps x,\ \ \forall s \ \ |Q_{s,1}|< \del x \big)=o(x^{-\a })\eps ^{-\a},
\end{equation*} 
\begin{equation*}
\P \big ( \sum _{s=0}^{J-1} |Q_{s,1}|>\eps x,\ \ \exists r\neq s \ \ |Q_{s,1}|>\del x,  |Q_{r,1}|> \del x\big)=o(x^{-\a })\eps ^{-a}.
\end{equation*}
Suppose now that there is only one $s_0$ such that $|Q_{s_0,1}|>\del x$. 
Then either $\sum _{s\neq s_0} |Q_{s,1}|$ is larger than $\eps x/2$ or not. 
In the first case again by Lemma \ref{allsmall} with $\eps$ replaced by $\eps/2$,  
\begin{equation*}
\P  ( \sum _{s\neq s_0} |Q_{s,1}|> \eps x /2,\ \ \forall s\neq s_0\ |Q_{s,1}|< \del x )=o(x^{-\a })\eps ^{-a}.
\end{equation*}
In the second case 
\begin{equation}
\label{essential}
\Big \{ \sum _{s=0}^{J-1} |Q_{s,1}|>\eps x, |Q_{s_0,1}|>\del x, \sum _{s\neq s_0} |Q_{s,1}|\leq \frac{\eps x}{2}\Big \}\subset
\Big \{  |Q_{s_0,1}|>\frac{\eps x}{2}, \sum _{s\neq s_0} |Q_{s,1}|\leq \frac{\eps x}{2}\Big \} 
\end{equation} 
and for different $s_0$ the sets on the right hand side of \eqref{essential} are disjoint.
But in view of \eqref{blockasym} in Lemma \ref{block}
\begin{equation}
\label{asympt:Qs1}
\sum _{s=0}^{J-1}\P (  |Q_{s_0,1}|> \eps x/2 )\leq C(\eps x)^{-\a}K^{\theta} J \leq C (\eps x)^{-\a}
K^{\theta -1}\log x 
\end{equation}
and \eqref{mainQ} follows.	
\end{proof}

\section{On tail behavior of univariate SRE}
\label{sec:uni:SRE}
The main result of this section is an alternative formula for Goldie constants (Theorem \ref{constant}). 
	We start with a lemma that summarizes the content of 
 \cite[Theorem 5]{kesten:1973}, \cite[Theorem 4.1]{goldie:1991} and \cite[Theorem 3]{grey:1991}. For a review see also
 Theorems 2.4.3, 2.4.4 and 2.4.7 in  
\cite{buraczewski:damek:mikosch:2016}.

\begin{lemma}
	\label{lemma:4447BDM}
	Let $((A_t,B_t))_{t\in \Z}$ be an $\R^2$-valued iid sequence and consider SRE 
	\begin{align}
\label{def:uni:sre}
	X_t = A_t X_{t-1}+B_t, \quad t\in \Z. 
	\end{align}
Suppose that either $\mathcal A (\a )$ or $\mathcal B (\a )$ from Section \ref{mainresults} holds.
%
Then there is a unique stationary causal solution $X_t$ to \eqref{def:uni:sre} and $X\eqd X_t$ satisfies the 
stochastic fixed point equation 
\begin{align}
\label{def:uni:sfe}
X \eqd AX+B.
\end{align}
	Moreover, there
	exist constants $c_\pm$ such that 
	\begin{align*}
	\P(\pm X>x) \sim \bigg \{
	\begin{array}{ll}
	c_\pm x^{-\alpha} & \text{if\ $\mathcal A(\a)$ holds} \\
	c_\pm x^{-\a }\ell (x) & \text{if\ $\mathcal B(\a )$ holds}
	\end{array},
	\end{align*}
	as $x\to\infty$, where constants are given by 
\begin{align}
\label{constants:uni}
\begin{split}
\mathcal A(\a ).\quad & c_{\pm} =
\bigg \{
\begin{array}{ll}
(\alpha \rho)^{-1} \E [((AX+B)^{\pm})^\a-((AX)^{\pm})^\a] &  \mbox{if}\quad  \P (A\geq 0)=1 \\
(2\alpha \rho)^{-1}\E[|AX+B|^\alpha - |AX|^\alpha]  & \mbox{if}\quad \P (A< 0)>0
\end{array},  \\
\mathcal B(\a ). \quad & c_{\pm} = \frac{1}{2}\Big\{
\frac{1}{1-\E|A|^\a } \pm
\frac{ p_\a-q_\a }{1-\E (A^+)^\a+\E (A^-)^\a}
\Big\}
\end{split}
\end{align}
with $\rho =\E |A|^\alpha \log |A| >0$. Finally, $c_++c_->0$ in all cases. 
\end{lemma}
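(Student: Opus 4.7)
The plan is to recognize this lemma as a compilation of Kesten's, Goldie's and Grey's classical theorems and to indicate how each piece fits; the paper will use this assembly repeatedly, so a careful reference-based sketch is sufficient.

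First I would establish existence. Under either $\mathcal A(\alpha)$ or $\mathcal B(\alpha)$, strict convexity of $\beta\mapsto\log\E|A|^\beta$ combined with $\E|A|^\alpha\leq 1$ (with equality in $\mathcal A$, strict in $\mathcal B$) and the existence of a moment past $\alpha$ or the log-moment condition yields $\E\log|A|<0$. Together with $\E\log^+|B|<\infty$ (trivial in $\mathcal A$, and automatic in $\mathcal B$ since $\E|B|^\beta<\infty$ for all $\beta<\alpha$), the classical contractivity arguments of \cite{bougerol:picard:1992a,brandt:1986} give absolute convergence of the Neumann-type series
\[
X=\sum_{i\geq 0}A_0A_{-1}\cdots A_{1-i}\,B_{-i},
\]
defining the unique strictly stationary causal solution of \eqref{def:uni:sre}, which trivially satisfies \eqref{def:uni:sfe}.

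For the tail under $\mathcal A(\alpha)$, I would invoke Goldie's implicit renewal theorem \cite[Thm.\,4.1]{goldie:1991}. Starting from the identity $|X|^\alpha-|AX|^\alpha=|AX+B|^\alpha-|AX|^\alpha$, substituting $x=e^u$, and changing measure by $\E_\alpha[\,\cdot\,]=\E[|A|^\alpha\,\cdot\,]$, one obtains a defective renewal equation on the line for $u\mapsto e^{\alpha u}\P(|X|>e^u)$; the non-arithmeticity of $\log|A|$ conditional on $\{A\neq 0\}$ triggers the key renewal theorem and produces the limiting constants. Running the same argument on $(\pm X)^+$ gives the signed limits; when $\P(A<0)>0$ the sign-flipping action of $A$ symmetrises, forcing $c_+=c_-$ and giving the second formula in \eqref{constants:uni}, while in the monotone case $\P(A\geq 0)=1$ one keeps the $+$ and $-$ parts separated to obtain the first.

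For the tail under $\mathcal B(\alpha)$, the strategy is Breiman-type. Iterating the fixed-point equation yields $X=\sum_{i\geq 0}\Pi_iB_{-i}$ with $\Pi_i=A_0\cdots A_{1-i}$ independent of $B_{-i}$ and $\E|\Pi_i|^\alpha\leq(\E|A|^\alpha)^i$ decaying geometrically. The moment hypothesis $\E|A|^{\alpha+\eta}<\infty$ supplies enough uniform integrability so that Breiman's lemma (in its signed multiplier version, e.g.\ \cite{grey:1991}) gives
\[
\P(\pm\Pi_iB_{-i}>x)\sim\bigl(\E(\Pi_i^\pm)^\alpha p_\alpha+\E(\Pi_i^\mp)^\alpha q_\alpha\bigr)x^{-\alpha}\ell(x),
\]
and so that the tail of $X$ equals the sum of these tails by convolution closure. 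Evaluating $\sum_{i\geq 0}\E(\Pi_i^\pm)^\alpha$ as a pair of coupled geometric series in $\E(A^\pm)^\alpha$, and using $p_\alpha+q_\alpha=1$, rearranges into the closed form in \eqref{constants:uni}.

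The hard part is positivity $c_++c_->0$, and it is exactly the issue the paper stresses. Under $\mathcal B(\alpha)$ it is read off the formula since $1-\E|A|^\alpha>0$. Under $\mathcal A(\alpha)$ the Goldie integrals could a priori vanish, and the non-degeneracy assumption $\P(Ax+B=x)<1$ for every $x$ is precisely what rules this out: a Lévy-type inequality applied to the iid sequence $(B_t)$, as in \cite[Thm.\,4.1]{goldie:1991}, shows that $X$ inherits a genuine regularly varying tail. This is the very ingredient that fails in the bivariate setting because $D_t$ in \eqref{D} depends on $W_{1,t-1}$, which is why the authors resort to the decomposition \eqref{decomp1} when they want to verify the analogue of positivity for $W_1$.
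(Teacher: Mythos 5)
Your proposal is correct and takes essentially the same route as the paper: the paper states this lemma with no proof at all, only a pointer to \cite{kesten:1973}, \cite{goldie:1991}, \cite{grey:1991} and Theorems 2.4.3, 2.4.4, 2.4.7 of \cite{buraczewski:damek:mikosch:2016}, and your sketch is a faithful assembly of exactly those sources. Your extra detail checks out — in particular the calculation for $\mathcal B(\alpha)$, where writing $p_i=\E(\Pi_i^+)^\alpha$, $q_i=\E(\Pi_i^-)^\alpha$ gives $p_i+q_i=(\E|A|^\alpha)^i$ and $p_i-q_i=(\E(A^+)^\alpha-\E(A^-)^\alpha)^i$, so summing and combining with $p_\alpha+q_\alpha=1$ reproduces the stated closed form, and your remark on the L\'evy-inequality step for positivity under $\mathcal A(\alpha)$ correctly locates the exact ingredient that breaks in the bivariate setting and motivates the decomposition \eqref{decomp1}.
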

For the proof of Theorem \ref{mainterm} we need an alternative expression for $c_{\pm}$:
\begin{theorem}\label{constant}
	Suppose that the assumptions of Lemma \ref{lemma:4447BDM} are satisfied
	and 
	 \begin{equation}\label{perpetuity}
	 \E |A|^{\a +\eta }<\infty , \quad \E |B|^{\a +\eta }<\infty 
	 \end{equation}
	 for a strictly positive $\eta $.
	 Let $\Pi_{1,k}:=A_1\cdots A_k$ for $k\ge 1$ and $\Pi_{1,0}=1$. Moreover, 
	\begin{equation*}
	\X _n=\sum _{i=1}^n
 \Pi_{1,i-1}B_i
, \quad \X _n^+=\max (\X _n,0), \quad \X _n^-=-\min (\X _n,0).
	\end{equation*}
	Then
	\begin{equation}\label{limits}
	c_{+}=\lim _{n\to \8} (\a \rho n)^{-1}\E (\X _n^+)^{\a},\quad
	c_{-}=\lim _{n\to \8} (\a \rho n)^{-1}\E (\X _n^-)^{\a},
	\end{equation}
where $c_{\pm}$ are those of \eqref{constants:uni} for $\mathcal A(\a )$. 
	\end{theorem}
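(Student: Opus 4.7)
The central observation is that $\X_n$ itself satisfies a one-step SRE: shifting indices in the iid sequence gives $\X_{n+1} = B_1 + A_1 \wt\X_n$ with $\wt\X_n := \sum_{j=1}^n (A_2\cdots A_j)B_{j+1} \eqd \X_n$ and independent of $(A_1,B_1)$. Writing $(A,B)$ for a generic independent copy, this yields the key identity
\begin{equation*}
\E\X_{n+1}^{+\a} = \E(A\X_n)^{+\a} + \Delta_n^+, \qquad \Delta_n^+ := \E\bigl[(B+A\X_n)^{+\a}-(A\X_n)^{+\a}\bigr],
\end{equation*}
and analogously $\E\X_{n+1}^{-\a} = \E(A\X_n)^{-\a} + \Delta_n^-$. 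Both quantities are finite for each $n$ since $\X_n$ is a finite linear combination.

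The main analytic step is to show that $\Delta_n^{\pm}$ converge to Goldie's integrals. Under \eqref{perpetuity} the series $X^* := \sum_{i\ge1}|\Pi_{1,i-1}||B_i|$ converges a.s.\ with $\E(X^*)^p<\infty$ for every $p<\a$ (standard perpetuity estimates, exploiting $\E|A|^\a=1$ together with the extra moment $\eta$), so $\X_n\to X$ a.s.\ and $|\X_n|\le X^*$. Combined with the elementary inequality $|(y+b)^{+\a}-y^{+\a}| \le C(|b|^\a + |b|(|y|+|b|)^{\a-1})$ and the independence $(A,B)\perp X^*$, H\"older gives an $L^1$-dominator uniform in $n$, and dominated convergence yields $\Delta_n^{\pm} \to \E[(B+AX)^{\pm\a}-(AX)^{\pm\a}]$. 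In the case $A\ge 0$ this limit equals $\a\rho c_{\pm}$ by Goldie's formula \eqref{constants:uni}; in the signed case the same argument applied to $|\cdot|^\a$ gives $\Delta_n^++\Delta_n^-\to \E[|B+AX|^\a-|AX|^\a]=2\a\rho c_+$ directly.

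To conclude I would run a Ces\`aro argument, handled separately according to the sign of $A$. If $A\ge 0$ a.s., then $(A\X_n)^{+\a}=A^\a\X_n^{+\a}$ and independence give $\E(A\X_n)^{+\a}=\E\X_n^{+\a}$, so the identity collapses to the telescoping $\E\X_{n+1}^{+\a}-\E\X_n^{+\a}=\Delta_n^+\to\a\rho c_+$ and Ces\`aro averaging yields $(\a\rho n)^{-1}\E\X_n^{+\a}\to c_+$. When $\P(A<0)>0$, set $p_n=\E\X_n^{+\a}$, $q_n=\E\X_n^{-\a}$, $a_{\pm}=\E(A^{\pm})^\a$ (so $a_++a_-=1$ with both positive); the disjoint-support identities $(Ay)^+=A^+y^++A^-y^-$ and $(Ay)^-=A^+y^-+A^-y^+$ give
\begin{equation*}
\binom{p_{n+1}}{q_{n+1}} = \begin{pmatrix} a_+ & a_- \\ a_- & a_+ \end{pmatrix}\binom{p_n}{q_n}+\binom{\Delta_n^+}{\Delta_n^-}.
\end{equation*}
Diagonalising: $p_n+q_n$ telescopes with increment $\Delta_n^++\Delta_n^-\to 2\a\rho c_+$, so Ces\`aro gives $(\a\rho n)^{-1}(p_n+q_n)\to c_++c_-$; while $d_n:=p_n-q_n$ obeys $d_{n+1}=(a_+-a_-)d_n+(\Delta_n^+-\Delta_n^-)$, a contractive linear recursion ($|a_+-a_-|<1$) with bounded forcing, so $d_n=O(1)$. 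Since $c_+=c_-$ in the signed case by Lemma \ref{lemma:4447BDM}, combining these yields $(\a\rho n)^{-1}p_n\to c_+$ and $(\a\rho n)^{-1}q_n\to c_-$ separately, as desired.

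The principal technical obstacle is the uniform $L^1$-dominator for $\Delta_n^{\pm}$ in the range $\a>1$: it requires $\E(X^*)^{\a-1}<\infty$, which relies on $\E|A|^{\a-1}<1$ (from strict convexity of $p\mapsto\log\E|A|^p$ between its zeros at $0$ and $\a$) combined with the extra moments in \eqref{perpetuity}. Once this dominator is in hand, the remaining bookkeeping, including the two-dimensional eigendecomposition in the signed case, is routine.
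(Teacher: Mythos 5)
Your argument is correct in spirit and takes a genuinely different route from the paper's. The paper proves \eqref{limits} by a large-deviation / band-decomposition argument: using Lemma~\ref{Products}-type exponential estimates and the condition \eqref{perpetuity}, it first shows that the contribution to $\E \X_n^{\alpha}$ from $\X_n \leq e^{n^{1/2}}$, from $\X_n > e^{m_2}$, and from $e^{m_1} < \X_n \leq e^{m_2}$ is $o(n)$; then it slices the surviving range into geometric bands $W_m = \{e^{m\delta} < \X_n \leq e^{(m+1)\delta}\}$, compares $\X_n$ with the stationary solution $\X$ (whose tail is $\sim c_+ x^{-\alpha}$), and establishes the uniform estimate $|\P(W_m)e^{m\delta\alpha} - c_+(1-e^{-\delta\alpha})| < \eps$ before summing over $m$. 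Your approach instead extracts a one-step recursion for $\E\X_{n}^{\pm\alpha}$ from $\X_{n+1}\eqd B + A\wt\X_n$, shows the increment $\Delta_n^{\pm}$ converges to $\alpha\rho c_{\pm}$ by dominated convergence (using the perpetuity majorant $X^*$ with $\E(X^*)^{\alpha-1}<\infty$ from $\E|A|^{\alpha-1}<1$ and Goldie's constant formula \eqref{constants:uni}), and finishes by Ces\`aro averaging (a $2\times 2$ linear system when $\P(A<0)>0$). This is shorter, avoids Petrov-type large deviations entirely, and arguably needs less than the full strength of \eqref{perpetuity}; what it buys in elementariness it pays for in relying on the a.s.\ identification of $\lim_n \X_n$ with the stationary solution.

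There is one genuine gap, in the signed case. You assert that $a_{\pm} = \E(A^{\pm})^{\alpha}$ are \emph{both} positive, hence $|a_+ - a_-| < 1$ and the difference recursion $d_{n+1} = (a_+-a_-)d_n + (\Delta_n^+ - \Delta_n^-)$ is contractive, giving $d_n = O(1)$. But $\mathcal A(\alpha)$ does not rule out $A \le 0$ a.s.\ with $\P(A<0)>0$; in that case $a_+ = 0$, $a_- = 1$, $a_+ - a_- = -1$, and the recursion is \emph{not} a contraction — indeed one can have $|d_{n+1}+d_n|$ bounded while $d_n$ grows linearly. The fix is easy but must be stated: all you need is $d_n = o(n)$. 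Writing $\delta_n := \Delta_n^+ - \Delta_n^-$, you have $\delta_n \to \alpha\rho(c_+ - c_-) = 0$ in the signed case, so $|d_n| \le |a_+ - a_-|^n|d_0| + \sum_{k<n}|\delta_k| \le |d_0| + \sum_{k<n}|\delta_k| = o(n)$ by Ces\`aro, in all cases $|a_+-a_-|\le 1$. With that repair the conclusion $(\alpha\rho n)^{-1}p_n \to c_+$, $(\alpha\rho n)^{-1}q_n\to c_-$ goes through.
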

\begin{remark}
	Under assumption $A\geq 0$ a.s. \eqref{limits} was proved in \cite{buraczewski:damek:zienkiewicz:2018} and then condition \eqref{perpetuity} may be replaced by a weaker one: $\E |A|^{\a}\log |A|<\8$.
	\end{remark} 
\begin{proof}
	We prove \eqref{limits} for $c_+$, the proof for $c_-$ is similar.
Let $\delta>0,\,\beta \in(1/2,1),\,m_1=\lfloor \rho n -n^{\beta}\rfloor$ and $m_2=\lfloor \rho n +n^{\beta}\rfloor$. 

First we show that  
\begin{align}\label{conlarge}
\lim _{n\to \8} n^{-1}\E \X_n^{\a}\Ind {\{\X_n >e^{m_2}\}} &=0, \\
\lim _{n\to \8} n^{-1}\E \X_n^{\a }\Ind {\{ e^{m_1}< \X_n \leq e^{m_2} \}}&=0, \label{conmiddle} \\
\lim _{n\to \8} n^{-1}\E \X_n^{\a}\Ind {\{0< \X_n\leq e^{n^{1/2} } \}}&=0,\label{consmall}
\end{align}
which reduce \eqref{limits} to 
\begin{equation}\label{limits1}
c_{+}=\lim _{n\to \8} (\a \rho n)^{-1}\E \X _n^{\a}\Ind {\{ e^{n^{1/2} }<\X_n\leq e^{m_1} \}}.
\end{equation}
For \eqref{conlarge} we notice that for $m\ge m_2$, there is $C_1>0$ such that $n\leq m \rho^{-1}-C_1m^{\beta}$.
Then by Lemma \ref{Products} 
\begin{align*}
\P (\X_n>e^m) & \leq \sum_{k=1}^n \P\big(|\Pi_{1,k-1} B_k|>e^{m} (n^2 \cdot \pi^2/6)^{-1} \big) \\
& \le C n^{1+2(\alpha+\eps)} \E|\Pi_{1,k-1} B_k|^{\a+\eps} e^{-(\a+\eps)m} \\
& \le C m^{1+2(\a+\eps)} e^{C_0 \eps^2 (m\rho^{-1} -C_1m^\beta)-\eps \rho C_1m^{\beta}}e^{-\a m}. 
\end{align*}	
Minimizing the quantity in the exponential w.r.t. $\eps$ we have 
\[
 - \frac{C_1^2\rho^2m^\beta}{4C_0 (m\rho^{-1}-C_1m^\beta)} \le -C_2 m^{2\beta-1}\quad \text{at}\quad \underline{\eps} 
= \frac{C_1\rho m^\beta}{2C_0 (m\rho^{-1}-C_1m^\beta)}. 
\]
Hence
\[
\P (\X_n >e^m) \leq C e^{-\a m} m^{-2}, 
\]
and so 
\[
\E \X_n^{\a }\Ind {\{e^m<\X_n \leq e^{m+1}\}}\leq C e^{\a } m^{-2 }.
\]
Summing up over $m\geq m_2$ we obtain \eqref{conlarge}.	
For \eqref{conmiddle} we consider the solution $\X=\sum _{i=1}^{\8}|\Pi_{1,k-1} B_i|$ of SRE $X_t=|A_t|X_{t-1}+|B_t|$. 
In view of Lemma \ref{lemma:4447BDM}
\begin{align*}
	\E \X_n^{\a}\Ind {\{e^{m_1}< \X_n \leq e^{m_2}\}}\leq \sum _{m=m_1}^{m_2}e^{(m+1)\a }
\P (\X_n >e^m) \leq C(m_2-m_1+1)e^{\a }\leq Cn^{\beta }=o(n). 
	\end{align*}
In a similar way to \eqref{conmiddle} we obtain 
\eqref{consmall}. In this case $\E \X_n^\a \Ind {\{0< \X_n \le e^{n^{1/2}}\}}=O(n^{1/2})$. 

For \eqref{limits1} let $N_1=\lfloor n^{1/2}\delta^{-1} \rfloor$,  $N_2=\lfloor m_1 \delta^{-1} \rfloor$ and
define the sets 
\begin{equation*}
W_m=\{ e^{m\del }< \X_n \leq e^{(m+1)\del}\},\ N_1 \leq m\leq N_2-1, \quad W_{N_2}=\{ e^{N_2\del }< \X_n \leq e^{m_1}\} 
\end{equation*}
and it is enough to prove that  
\begin{equation}\label{conmain}
I(n) :=  (n\a \rho)^{-1} \sum _{m=N_1}^{N_2-1} \E \X_n^{\a}\Ind {\{W_m\}} -c_+ \to 0 \quad \text{as}\quad n\to \infty,
\end{equation}
since, as above,
\begin{equation*}
\E \X_n^{\a} \Ind {\{W_{N_2}\}}\leq Ce^{m_1\a - N_2\del \a}\leq Ce^{\del \a}. 
\end{equation*}  

In order to prove \eqref{conmain} we show that for fixed $\eps>0$ and for sufficiently large $n$, 
\begin{equation}\label{sets}
    \big |\P (W_m)e^{m \del \a}-c_+ (1-e^{-\del \a} ) \big|<\eps 
\end{equation}
uniformly in $N_1\leq m\leq N_2-1$. 
First we see that \eqref{sets} implies \eqref{conmain} and then we will prove \eqref{sets}. 
We write 
\begin{align*}
I(n) &\leq 
 (n\a \rho)^{-1} \Big |\sum _{m=N_1}^{N_2-1}\E \big (\X_n^{\a}-e^{m\del \a}\big )\Ind {\{W_m\}}\Big |
 +\Big |(n\a \rho)^{-1}\sum _{m=N_1}^{N_2-1} e^{m\del \a}\P (W_m) -c_+\Big |\\
 &= :I_1(n)+I_2(n).
    \end{align*}
    In view of \eqref{sets}, $\P (W_m)\leq C(\del +\eps)e^{-m\del \a}$ and so
    \begin{align*}
    I_1(n)&\leq C (n\a \rho)^{-1} \sum _{m=N_1}^{N_2-1} \left (e^{(m+1)\del \a}-e^{m\del \a}\right )(\del +\eps)e^{-m\del \a}\\
  &\leq C (n\a \rho)^{-1} (N_2-N_1) \del \a (\del +\eps)\leq C (\del +\eps).
    \end{align*}
Moreover, 
\begin{equation*}
    I_2(n)\leq (n\a \rho)^{-1} \sum _{m=N_1}^{N_2-1} |e^{m\del \a}\P (W_m)-c_+(1-e^{-\del \a}) |+  | (n\a \rho)^{-1}(N_2-N_1)c_+(1-e^{-\del \a})-c_+ |
    \end{equation*}
    and 
    \begin{equation*}
    \lim _{n\to \8}\frac{N_2-N_1}{n\rho }= \del ^{-1}.
    \end{equation*}
    Hence 
    \begin{equation*}
    \lim _{n\to \8}| (n\a \rho)^{-1}(N_2-N_1)c_+(1-e^{-\del \a})-c_+ |=O(\delta).
    \end{equation*}
    and by \eqref{sets}
    \begin{equation*}
    \limsup _{n\to \8}I_2(n)\leq \lim _{n\to \8}\frac{N_2-N_1}{n\a \rho}\eps = \eps (\del \a)^{-1}.
    \end{equation*}
Correcting above bounds, we have  
\[
\limsup_{n\to \8}I(n)\leq \eps (\delta \a)^{-1}+ C(\delta +\eps). 
\]
Hence letting $\eps \to 0$ and then $\del \to 0$ we obtain \eqref{conmain}. 
   
Now we prove \eqref{sets}. Let $\X=\sum _{i=1}^{\8}\Pi_{1,i-1} B_i$ and $\CY_n=\X-\X_n$. 
Proceeding as in the proof of Lemma \ref{inftypart} ($\CY_n$ plays the role of $\wt Z^{n_2}$) we can prove that there is $C_1>0$ such that 
        \begin{equation}\label{caly}
        \P \left (|\CY_n|> n^{-1} e^{m\del}\right )\leq C_1 e^{-m\del \a}n^{-1} 
        \end{equation} 
        for $N_1=O(n^{1/2})\leq m \leq N_2=O(n)$, where $\delta x$ of \eqref{ineq:Z_n2}
is replaced by $n^{-1} e^{m\del}$. 
We write
        \begin{align*}
 J(m)=\P (W_m)e^{m\del \a}-c_+\left (1-e^{-\del \a}\right )& \le \P \left (e^{m\del}<\X-\CY_n\leq e^{(m+1)\del }\right )e^{m\del \a}-c_+\left (1-e^{-\del \a}\right )\\
        	&\leq \P \left (e^{m\del} (1-n^{-1} )< \X\leq e^{(m+1)\del} (1+n^{-1})\right )e^{m\del \a}
        	\\
        	&\quad +\P \left (|\CY_n|>n^{-1}e^{m\del}\right )e^{m\del \a}-c_+\left (1-e^{-\del \a}\right ).
        	\end{align*}
        But given $\eps $, for sufficiently large $n$, we have
        \begin{align*}
        |\P \left (\X> e^{m\del}(1-n^{-1} )\right )e^{m\del \a}(1-n^{-1} )^{\a}-c_+|&<\eps, \\
        |\P \left (\X> e^{(m+1)\del}(1+n^{-1} )\right )e^{(m+1)\del \a}(1+n^{-1} )^{\a}-c_+|&<\eps .
        \end{align*}
        Hence
        \begin{equation*}
        J(m)\leq c_+ (1-n^{-1} )^{-\a} -c_+(1+n^{-1} )^{-\a}e^{-\del \a} -c_+\left (1-e^{-\del \a}\right )+\eps \left ((1-n^{-1} )^{-\a} +(1+n^{-1} )^{-\a} \right ) 
        \end{equation*}	
        and letting $n \to \8$ we obtain
        \begin{equation*}
        \limsup _{n\to \8}\left (\P (W_m)e^{m\del \a}-c_+\left (1-e^{-\del \a}\right )\right )\leq \eps.
        \end{equation*}
For the opposite inequality, notice that 
for $n$ large enough, $1+n^{-1}\leq e^{\del}(1-n^{-1})$, and so we may consider 
\[
 \{e^{m\delta} (1+n^{-1})<\X \le e^{(m+1)\delta }(1-n^{-1})\}\cap \{|\CY_n|<n^{-1}e^{m\delta}\} \subset W_m.
\]
Hence 
\begin{equation*}
\P (W_m)e^{m\del \a}\geq \P \big(e^{m\del} (1+n^{-1} )<\X \leq e^{(m+1)\del}(1-n^{-1} )\big )e^{m\del \a}-\P(|\CY_n|>n^{-1}e^{m\delta}) e^{m\delta\a}.
        \end{equation*}
Proceeding as above we have 
\begin{equation*}
\liminf_{n\to \8}\big( \P (W_m)e^{m\del \a}-c_+ (1-e^{-\del \a})\big )\geq \eps.
\end{equation*} 
\end{proof}

For a positive random variable $A$ let $\Lambda (\b )=\log \E A^{\b }$. Suppose that $\Lambda $ is well defined 
for $0\leq \b < \b _0 \leq \8 $. Then so are $\Lambda '$ and $\Lambda ''$. Let $\lambda =\sup_{\b <\b_0}\Lambda '(\b )$  
and $\sigma(\beta)=\Lambda ''(\beta) $. 
The following uniform large deviation theorem is due to \cite[Theorem 2]{petrov:1965}.
\begin{thm}[Petrov (1965)]\label{petrov} 
	Suppose  that $c$ satisfies $ {\mathbb E} \left[ \log A \right] < c < \lambda $, and suppose that $\delta(n)$ is an arbitrary function satisfying  $\lim_{n \to \infty} \delta(n) = 0$.
	Also, assume that the law of $\log A$ is non-lattice.
	Then with 
	$\beta $ chosen such that $\Lambda '(\beta )=c$, we have that
	\begin{align*} 
	{\mathbb P} &  \big( \log A_1+\dots +\log A_n > n(c + \gamma_n) \big )\nonumber\\
	& \quad\quad = \frac{1}{\beta\sigma(\beta) \sqrt{2\pi n}} \exp\Big\{-n \Big( \beta(c+\gamma_n) - \Lambda(\beta) + \frac{\gamma_n^2}{2\sigma^2(\beta)}
	\big(1 + O(|\gamma_n| ) \big) \Big) \Big\} (1+o(1))
	\end{align*}
	as $n \to \infty$,
	uniformly with respect to $c$ and $\gamma_n$ in the range
	\begin{equation} \label{petrov-0}
	{\mathbb E} \left[ \log A \right] + \eps \le c \le \lambda - \eps \quad \text{\rm and} \quad |\gamma_n| \le \delta(n),
	\end{equation}
	where $\eps >0$.
\end{thm}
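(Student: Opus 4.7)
The plan is to reduce Petrov's theorem to a local central limit problem via the classical exponential tilt (Cramér transform), and then to evaluate the resulting Gaussian integral asymptotically. Given $c$ with $\E\log A<c<\lambda$, let $\beta=\beta(c)$ be the unique solution of $\Lambda'(\beta)=c$; such a $\beta$ exists in $(0,\beta_0)$ and depends continuously on $c$ because $\Lambda$ is smooth and strictly convex. Define the tilted measure on $\mathcal F_n$ by
\begin{equation*}
\frac{d\mathbb{P}_\beta}{d\mathbb{P}}\bigg|_{\mathcal F_n}=\exp\bigl(\beta S_n-n\Lambda(\beta)\bigr),\qquad S_n=\log A_1+\cdots+\log A_n.
\end{equation*}
Under $\mathbb{P}_\beta$ the variables $\log A_i$ remain i.i.d.\ with mean $\Lambda'(\beta)=c$ and variance $\Lambda''(\beta)=\sigma^2(\beta)$. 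The identity
\begin{equation*}
\mathbb{P}\bigl(S_n>n(c+\gamma_n)\bigr)=e^{n\Lambda(\beta)-n\beta(c+\gamma_n)}\,\E_\beta\bigl[e^{-\beta(S_n-n(c+\gamma_n))}\,\mathbf 1_{\{S_n>n(c+\gamma_n)\}}\bigr]
\end{equation*}
is the usual starting point.

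Next I would rewrite the tilted expectation via $Z_n=(S_n-nc)/(\sigma(\beta)\sqrt n)$, which under $\mathbb P_\beta$ is a centred unit-variance normalised sum. Writing $t_n=\gamma_n\sqrt n/\sigma(\beta)$, the expectation becomes
\begin{equation*}
\E_\beta\bigl[\exp\bigl(-\beta\sigma(\beta)\sqrt n\,(Z_n-t_n)\bigr)\mathbf 1_{\{Z_n>t_n\}}\bigr].
\end{equation*}
Applying a uniform local CLT (or an Edgeworth expansion) under $\mathbb P_\beta$, one replaces the law of $Z_n$ by the standard normal with a controlled error; this is where the non-lattice hypothesis on $\log A$ is crucial, since it yields $|\hat f_\beta(t)|\le 1-\delta$ for $|t|\ge\eta$ and hence exponential damping of the Fourier remainder. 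Completing the square in the Gaussian integrand and applying the standard Mills-ratio expansion $\int_a^\infty\varphi(y)\,dy=\varphi(a)(1/a)(1+O(a^{-2}))$ to $a=\beta\sigma(\beta)\sqrt n+t_n$ yields, after cancellation of the Gaussian exponentials,
\begin{equation*}
\E_\beta\bigl[\cdots\bigr]\sim\frac{1}{\beta\sigma(\beta)\sqrt{2\pi n}}\exp\Bigl(-\beta\gamma_n n-\tfrac{\gamma_n^2 n}{2\sigma^2(\beta)}\bigl(1+O(|\gamma_n|)\bigr)\Bigr),
\end{equation*}
and combining this with the prefactor $e^{n\Lambda(\beta)-n\beta(c+\gamma_n)}$ delivers exactly the formula claimed in Theorem \ref{petrov}.

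The main obstacle is uniformity in $c$ and $\gamma_n$ across the range \eqref{petrov-0}. Since $c$ varies in a compact subinterval $[\E\log A+\eps,\lambda-\eps]$, the map $c\mapsto\beta(c)$ takes values in a compact subset of $(0,\beta_0)$ and $\sigma^2(\beta(c))$ is bounded above and bounded away from zero, so all ingredients are controlled uniformly in $c$ once one shows that the local CLT remainder is uniform over the corresponding tilted families $\mathbb P_\beta$. This is the delicate point: one needs a characteristic-function argument with estimates that are uniform in $\beta$ over compact sets (the non-lattice property of $\log A$ transfers to all $\mathbb P_\beta$ with a common lower bound on $\delta$), together with the observation that the $O(|\gamma_n|)$ correction absorbs the error between the exact and the Gaussian log-Laplace transform expanded to the relevant order. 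The remainder of the work is bookkeeping: checking that all error terms can be packaged into the $(1+o(1))$ factor uniformly in the stated range, using $|\gamma_n|\le\delta(n)\to 0$.
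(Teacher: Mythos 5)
The paper does not prove Theorem~\ref{petrov}: it quotes it verbatim and attributes it to \cite[Theorem~2]{petrov:1965}, so there is no internal proof to compare against. Your sketch (exponential tilting, then a local CLT/Mills-ratio evaluation, with uniformity obtained by compactness of the parameter range) is indeed the canonical route and is essentially Petrov's.

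Two concrete issues. First, a bookkeeping slip in your last display. With $a_n=\beta\sigma(\beta)\sqrt{n}$ and $t_n=\gamma_n\sqrt{n}/\sigma(\beta)$, completing the square gives
\[
\E_\beta\bigl[e^{-a_n(Z_n-t_n)}\,\mathbf 1_{\{Z_n>t_n\}}\bigr]
\approx\int_0^\infty e^{-a_n u}\varphi(u+t_n)\,du
\sim\frac{e^{-t_n^2/2}}{a_n\sqrt{2\pi}}
=\frac{1}{\beta\sigma(\beta)\sqrt{2\pi n}}\,e^{-n\gamma_n^2/(2\sigma^2(\beta))},
\]
with \emph{no} $-\beta\gamma_n n$ term in the exponent; that term lives entirely in the prefactor $e^{n\Lambda(\beta)-n\beta(c+\gamma_n)}$. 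As written you are double-counting $-n\beta\gamma_n$, and the ``combining with the prefactor delivers exactly the formula'' step does not close. Second, and more substantively, the straight Gaussian replacement under $\P_\beta$ does not by itself produce the multiplicative correction $(1+O(|\gamma_n|))$ on $\gamma_n^2/(2\sigma^2(\beta))$. The range $|\gamma_n|\le\delta(n)\to 0$ permits $\gamma_n^2 n\to\infty$, so the local CLT error at the scale $t_n$ is not automatically $o(1)$ in the \emph{exponent}; one must either tilt at the exact level $c+\gamma_n$ (so $S_n-n(c+\gamma_n)$ is centred, and the quadratic-plus-cubic term in $\gamma_n$ emerges from Taylor expansion of $\Lambda^*$ about $c$) or carry the Cram\'er--Edgeworth correction terms explicitly and check that they contribute an $O(|\gamma_n|)$-relative error to the quadratic piece. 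Without one of these devices the claimed uniformity in $\gamma_n$ over the whole range \eqref{petrov-0} is not justified.
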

\begin{remark}{\rm
		In \eqref{petrov-0}, we may have that $\sup \{ \b : \b \in dom (\Lambda) \} = \infty$ or ${\mathbb E} 
\left[ \log A \right] =-\infty$.  In these cases, the quantities
		$\infty -\varepsilon$ or  $-\infty -\varepsilon$ should be interpreted as arbitrary positive, respectively negative, constants.}
\end{remark} 
\noindent 
{\bf The list of frequently used symbols.}
\begin{enumerate}
\item $C,C',C_1,C_2, C_3$: positive constants whose values are not of interest
\item $C_0$ the constant defined in \eqref{exp}
\item $c _{R,\pm}= \lim _{n\to \8}(\a n)^{-1} \E (M_n^{\pm})^{\a }\quad \mbox{and}\quad  c_R=\lim _{n\to \8} (\a n)^{-1}\E |M_n|^{\a }>0$
\item $c_{2,\pm}$: tail constants of $W_2$ 
\item $c_{1,\pm}$: tail constants of $\wh W_1$
\item $\ov c_\pm,\,\wt c_\pm$: tail constants of $W_1$ in Table \ref{theorem:main1}
\item $\mathcal{C}= \sigma^\a \rho_1^{-\a/2} \E|N|^\a$
\item $ \mathcal D = c_2c_R/2\ \text{or}\ c_{2,\pm} c_{R,+}+c_{2,\mp}c_{R,-}$
\item $\wt{\ov D}=\ov D/2$
\item $\ov D$ the defined in \eqref{ovD} 
\item $\E _{\a _2}[Z] = \E  \big[|\Pi ^{(2)}_{0,-n} |^{\a _2} Z\big]$
\item $\mathcal F_n=\sigma((\bfA_i,\bfB_i)_{-n \le i\le 0})$
\item $J\in \N$: $JK\leq n_1<(J+1)K$
\item $K=\lfloor  \rho _1 \rho _2^{-1} (L/2-1)\rfloor$  
\item $K'=K-\lfloor K^{\theta} \rfloor$ for a fixed $0<\theta <1$ 
\item $L= \lfloor \ov D \sqrt{(\log \log x)\log x} \rfloor$
\item $\wt L=L/2$
\item 
$M_n=\sum_{i=1}^n \Pi_{0,2-i}^{(1)}A_{12,1-i}
\Pi_{-i,1-n}^{(2)}$ \eqref{Mn} 
\item $n_0= \lfloor \rho_1^{-1}\log x \rfloor$
\item $ n_1=n_0 -L,\quad n_2=n_0+L$
\item $\wt n_1=\lfloor n_0-\wt L \rfloor $ 
\item $p=\lfloor \rho _2^{-1}\log x +L\rfloor $
\item $U_i= A_{12,i}A_{22,i}^{-1}$, $U=A_{12}A_{22}^{-1}$
\item $V_i = A_{11,i}A_{22,i}^{-1}$, $V=A_{11}A_{22}^{-1}$
\item $ w_{n,\pm}=\E (M_n^{\pm})^{\a _2} \quad \text{and}\quad w_n=\E |M_n|^{\a _2}$ 
\item $w=\lim _{n\to \8} w_n >0\quad \mbox{and}\quad w _{\pm}=\lim _{n\to \8} w_{n,\pm}$
\item $\mu= \E A_{11}^{-1} A_{12} |A_{11}|^\a$  
\item $\xi = \frac{\rho ^3 _1 \ov D^2}{4C_0}$
\item $\wt \xi =\frac{\rho _1^3\wt{\ov D}^2}{4C_0} =\frac{\rho _1^3 \ov D^2}{16C_0}$
\item $  \Pi_{t,s}^{(i)} =\Pi _{j=s}^t A_{ii,j},\,t\ge s,\,i=1,2\quad
 \mathrm{and}\quad \Pi_{t,s}^{(i)}=1,\,t<s\quad \mathrm{and}\quad \Pi_t^{(i)}=\Pi_{t,1}^{(i)}$
\item $\rho_i=\E|A_{ii}|^\alpha \log |A_{ii}|,\,i=1,2$
\item $\sigma^2 =\E (A_{12}A_{11}^{-1})^2 |A_{11}|^\a$
\end{enumerate}
\bigskip

\noindent {\bf Acknowledgments}
We thank both referees for careful reading and helpful suggestions improving considerably the paper. 
E. Damek is grateful to Jacek Zienkiewcz for all the conversations on SRE they had in the past. 
Although they did not concerned this paper directly, they were an invaluable source of inspiration for her.
 E. Damek was partly supported by the NCN grant UMO-2019/33/B/ST1/00207.
M. Matsui's research is partly supported by the JSPS Grant-in-Aid for Scientific Research C
(19K11868).

{\small

\end{document}




\bibitem{basrak:segers:2009}
{\sc Basrak, B. and Segers, J.}\ (2009) Regularly varying multivariate time
series.  {\em Stochastic Process. Appl.} {\bf 119}, 1055--1080.

\bibitem{bingham:goldie:teugels:1987}
{\sc Bingham, N.H., Goldie, C.M.\ and Teugels, J.L.}\ (1987)
{\em Regular Variation.} Cambridge University Press, Cambridge (UK).


\bibitem{bollerslev:1990}
{\sc Bollerslev, T.}\ (1990)
Modelling the coherence in short-run nominal exchange rates: a
multivariate 
generalised ARCH model. {\em Review of Economics and Statistics} {\bf
	72}, 498--505.

\bibitem{boman:lindskog:2009}
{\sc Boman, J. and Lindskog, F.}\ (2009)
Support theorems for the Radon transform and Cram\'er-Wold theorems.
{\em J. Theoret. Probab.} {\bf 22}, 683--710.

\bibitem{breiman:1965}
{\sc Breiman, L.}\ (1965)
On some limit theorems similar to the arc-sin law. 
{\em Theory Probab. Appl.} {\bf 10}, 323--331.


\bibitem{davis:hsing:1995}
{\sc Davis, R.A. and Hsing, T.}\ (1995)
Point process and partial sum \con\ for weakly dependent
\rv s with infinite variance. {\em Ann. Probab.} {\bf 23}, 879--917.

\bibitem{davis:mikosch:2009}
{\sc Davis, R.A. and Mikosch, T.}\ (2009)
The extremogram: a correlogram for extreme events. 
{\em Bernoulli} {\bf 15}, 977--1009.

\bibitem{davis:mikosch:basrak:1999}
{\sc Davis, R.A., Mikosch, T. and Basrak, B.}\ (1999)
Sample ACF of multivariate stochastic recurrence equations With
application to GARCH, Preprint. 

\bibitem{davis:mikosch:cribben:2012}
{\sc Davis, R.A., Mikosch, T. and Cribben, I.}\ (2012)
Towards estimating extremal serial dependence via the boostrapped extremogram.
{\em J. Econometrics} {\bf 170}, 142--152.

\bibitem{davis:mikosch:zhao:2013}
{\sc Davis, R.A., Mikosch, T. and Zhao, Y.}\ (2013)
Measures of serial extremal dependence and their estimation. 
{\em Stochastic Process. Appl.} {\bf 123}, 2575--2602.


\bibitem{fernandez:muriel:2009}
{\sc Fern\'andez, B. and Muriel, N.}\ (2009)
Regular variation and related results for the multivariate GARCH(p,q)
model with constant conditional correlations.
{\em J. Multivariate Anal.} {\bf 100}, 1538--1550.



\bibitem{goldie:1991}
{\sc Goldie, C.M.}\ (1991)
Implicit renewal theory and tails of solutions of random equations.
{\em Ann. Appl. Probab.} {\bf 1}, 126--166.


\bibitem{jeantheau:1998}
{\sc Jeantheau, T.}\ (1998)
Strong consistency of estimators for multivariate ARCH models. 
{\em Econometric Theory} {\bf 14}, 70--86. 


\bibitem{krengel:1985}
{\sc Krengel, U.}\ (1985)
Ergodic Theorems.
With a supplement by Antoine Brunel.
Walter de Gruyter \& Co., Berlin.




\bibitem{mikosch:starica:2000}
{\sc Mikosch, T. and St\u{a}ric\u{a}, C.}\ (2000)
Limit theory for the sample autocorrelations and
extremes of a GARCH(1,1) process. 
{\em Ann. Statist.} {\bf 28}, 1427--1451.

\bibitem{mikosch:samorodnitsky:tafakori}
{\sc Mikosch, T., Samorodnitsky, G. and Tafakori, L.}\ (2013) 
Fractional moments of solutions to stochastic recurrence equations.
{\em J. Appl. Probab.} {\bf 50}, 969--982.


\bibitem{resnick:1987}
{\sc Resnick, S.I.}\ (1987)
{\em Extreme Values, Regular Variation, and Point Processes.}
Sprin\-ger, New York.
\bibitem{resnick:2007}
{\sc Resnick, S.I.}\ (2007)
{\em Heavy-Tail Phenomena: Probabilistic and Statistical Modeling.}
Springer, New York.


\end{thebibliography}}

\end{document}